\documentclass{amsart}
\usepackage[paper=a4paper, margin=2.8cm]{geometry}

\usepackage[english]{babel}

\usepackage{mathrsfs, mathtools, amssymb, mathabx, stmaryrd, enumitem}
\usepackage{caption}
\usepackage{subcaption}
\usepackage{dsfont}
\usepackage{comment}

\usepackage[all,cmtip]{xy}
\usepackage{tikz}
\usepackage{tikz-3dplot}
\usetikzlibrary{calc}

\tdplotsetmaincoords{70}{120}

\usepackage{hyperref}
\hypersetup{
    unicode=false,          
    pdftoolbar=true,        
    pdfmenubar=true,        
    pdffitwindow=false,     
    pdfstartview={FitH},    
    pdftitle={},    
    pdfauthor={},     
    pdfkeywords={}, 
    pdfnewwindow=true,      
    colorlinks=true,       
    linkcolor=blue,          
    citecolor=red,        
    filecolor=magenta,      
    urlcolor=cyan           
}

\theoremstyle{plain}
\newtheorem{theorem}{Theorem}[section]
\newtheorem*{theoremM}{Theorem}
\newtheorem{lemma}[theorem]{Lemma}
\newtheorem{proposition}[theorem]{Proposition}
\newtheorem{corollary}[theorem]{Corollary}
\newtheorem{assumption}{Assumption}

\newtheorem*{assumptionI}{Irreducibility Assumption}
\newtheorem*{assumptionS}{Stratification Assumption}

\theoremstyle{definition}
\newtheorem{definition}[theorem]{Definition}
\newtheorem{remark}[theorem]{Remark}
\newtheorem{example}[theorem]{Example}

\newcommand{\rleft}{\mathopen{}\mathclose\bgroup\left}
\newcommand{\rright}{\aftergroup\egroup\right}

\newcommand{\C}{{\mathbb{C}}}

\newcommand{\R}{{\mathbb{R}}}
\newcommand{\Z}{{\mathbb{Z}}}

\newcommand{\HH}{\mathrm{H}}

\newcommand{\cI}{{\mathcal{I}}}

\newcommand{\rk}{{\mathrm{rk}}}

\newcommand{\cP}{{\mathcal{P}}}

\newcommand{\rev}{{\mathrm{rev}}}

\DeclareMathOperator{\verti}{vert}
\def\OO{\mathcal{O}}
\def\cO{\mathcal{O}}
\def\BB{\mathcal{B}}
\def\RR{\mathbb{R}}
\def\CC{\mathbb{C}}
\DeclareMathOperator{\CH}{CH}
\DeclareMathOperator{\Con}{Con}

\def\CCC{\mathcal{C}}

\newcommand\f{{\mathsf{f}}}

\begin{document}
\selectlanguage{english}



\title[Vertex Posets, Monotone Path Polytopes, and Chow Polynomials]{Vertex Posets, Monotone Path Polytopes, and Chow Polynomials}

\author{Mateusz Micha\l ek}
\address[M.~Micha\l ek]{University of Konstanz, Department of Mathematics and Statistics,
Post Box 192,
78457 Konstanz, Germany}
\email{mateusz.michalek@uni-konstanz.de}

\author{Leonid Monin}
\address[L.\,Monin]{Institut de Math\'ematiques, EPFL, B\^atiment MA, Station 8, 1015 Lausanne, Switzerland}
\email{leonid.monin@epfl.ch}

\author{Botong Wang}
 \address[B.~Wang]{Department of Mathematics,
         University of Wisconsin--Madison,
         Van Vleck Hall,
         480 Lincoln Drive, Madison, WI
       }
\email{wang@math.wisc.edu}

\subjclass[2020]{Primary: 52B05  Secondary: 06A11}
\keywords{polytope, stratification, Chow polynomial, monotone path polytope}

\begin{abstract}
Let $P\subset\mathbb R^n$ be a convex polytope and let $\ell$ be a linear functional which is nonconstant on every edge of $P$. The induced acyclic orientation determines positive and negative Bia{\l}ynicki-Birula type partitions of $P$ into unions of relative interiors of faces. 
Our first result establishes a duality: the positive partition is a stratification if and only if the negative one is a stratification.


Our second result connects poset invariants with monotone path polytopes. Assuming the induced vertex relation admits the structure of a graded poset, we prove that the Chow polynomial of the resulting vertex poset agrees with the $h$-polynomial of a (dual) monotone path polytope.

\end{abstract}

\maketitle


\section{Introduction}
\label{sec:intro}

Let $P\subset \mathbb R^n$ be a convex polytope and let $\ell:\mathbb R^n\to \mathbb R$ be a linear functional which is nonconstant on every edge of $P$. Then $\ell$ induces an acyclic orientation of the $1$-skeleton of $P$ by directing each edge from the endpoint of smaller $\ell$-value to the endpoint of larger $\ell$-value. We denote by $\hat 0$ (resp.~$\hat 1$) the unique vertex minimizing (resp.~maximizing) $\ell$ on $P$.

A basic object attached to $(P,\ell)$ is a Bia{\l}ynicki-Birula type partition of $P$ into subsets indexed by the vertices. For a vertex $v$, let $O^-(v)$ (resp.~$O^+(v)$) be the union of relative interiors of faces $G$ of $P$ on which $\ell$ attains its maximum (resp.~minimum) at $v$ (see Definition~\ref{def:cell}). Intuitively, $O^-(v)$ collects points whose ``sink'' (within the minimal face containing the point) is $v$, while $O^+(v)$ collects points whose ``source'' is $v$.

Although $\{O^-(v)\}_v$ and $\{O^+(v)\}_v$ are always partitions of $P$, they need not be \emph{stratifications} in the topological sense: it may happen that the closure of two strata intersect without one being contained in the other. Our first goal is to understand the combinatorial structure on the vertex set which determines whether the BB-type partitions are stratifications. 

There are several natural relations on the vertices induced by $\ell$. First, the directed $1$-skeleton defines the \emph{chain order} $\mathcal C$ by declaring $\mathcal C(v,w)$ if there exists a directed chain $v\to\cdots\to w$. Second, we consider the \emph{witness relation} $\mathcal O$, where $\mathcal O(v,w)$ holds if there exists a face $G$ of $P$ such that $\ell$ attains its minimum on $G$ at $v$ and its maximum on $G$ at $w$. Third, we introduce positive and negative Bruhat-type relations $\mathcal B^\pm$ (see Section~\ref{sec:polytope}). In general, $\mathcal O$ need not be transitively closed, so it may fail to be a partial order, even though its transitive closure is always $\mathcal C$.

Our first main result is a symmetry statement that is not apparent from the definitions. In particular, the statement is no longer true without the genericity assumption for $\ell$.

\begin{theoremM}[Theorem~\ref{thm: O- strat iff O+}]
The partition $O^-$ is a stratification if and only if the partition $O^+$ is a stratification.
\end{theoremM}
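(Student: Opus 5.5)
The plan is to reduce both conditions to one combinatorial property of the vertex relations that is visibly invariant under $\ell\mapsto-\ell$. The reduction is natural because replacing $\ell$ by $-\ell$ exchanges $O^+$ and $O^-$ and leaves $P$ unchanged. It is then enough to find a condition $(\ast)$ on $(P,\ell)$ such that (i) $O^-$ is a stratification iff $(\ast)$ holds, and (ii) $(\ast)$ is unchanged when the orientation is reversed. My candidate is transitivity of the witness relation, i.e.\ $\mathcal O=\mathcal C$. It satisfies (ii): $\mathcal O_{-\ell}(v,w)$ holds iff $\mathcal O_\ell(w,v)$, and transitivity is preserved by taking the opposite relation. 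The chain order behaves the same way.

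\emph{Step 1: describe closures.} Let $G$ range over faces. Since $O^-(w)=\bigcup_{\max_G\ell \text{ at } w}\operatorname{relint} G$ and every face of such a $G$ is a face of $P$, the closure $\overline{O^-(w)}$ is the union of the closed faces $G$ with $\ell$-maximum at $w$. For a face $F\subseteq G$ with top vertex $v$, $\operatorname{relint}F\subseteq O^-(v)$. Hence $\overline{O^-(w)}\cap O^-(v)\neq\emptyset$ iff $v$ is the top of a face of some $G$ with top $w$. This holds iff $v\in G$ for such a $G$ (take $F=\{v\}$), i.e.\ iff $\mathcal O(u,w)$ for some witness face $G$ containing $v$. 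So the frontier condition becomes: for every face $G$ with top $w$ and every vertex $v\in G$, every face $F'$ with top $v$ lies in some face $G'$ with top $w$.

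\emph{Step 2: equate this with the transitivity-type condition $(\ast)$.}

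For the direction ``frontier condition $\Rightarrow$ transitivity'', suppose $\mathcal O(u,v)$ via a face $F'$ and $\mathcal O(v,w)$ via a face $G$. By the frontier condition, $F'\subseteq G'$ for some face $G'$ with top $w$. Now take $H$, the smallest face of $G'$ containing $u$ and $w$. Its top is $w$, since it is a face of $G'$ containing $w$. I expect its bottom to be $u$: if a vertex of $H$ lay below $u$, then an edge of $H$ out of $u$ would descend, and I will try to rule this out using that $u$ is the bottom of $F'\subseteq H$ together with the structure of the vertex figure of $H$ at $u$.

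For the converse, take $F'$ with top $v$ and $G$ with top $w$ containing $v$. Let $u$ be the bottom of $F'$, so $\mathcal O(u,v)$ holds. I will build $G'$ as the join $F'\vee G$ in the face lattice, or as a face of it, and use transitivity to show that some face containing $F'\cup\{w\}$ has top $w$.

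\emph{Step 3: symmetrize.} Once (i) is established for $\ell$, applying it to $-\ell$ and using (ii) gives the theorem.

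The main obstacle is Step 2, specifically controlling the extreme vertex of the join of two faces, which a priori can acquire a new top or bottom. The genericity of $\ell$ must enter here, since the statement fails without it. My first attempt will be to work locally in the vertex figure at $v$, where faces with top $v$ correspond to faces of the vertex figure lying in the open half-space $\{\ell<\ell(v)\}$. If the correspondence $\mathcal O\leftrightarrow\mathcal O$-transitivity turns out not to be exact, I will instead take $(\ast)$ to be the frontier condition of Step 1 itself. I will then prove its self-duality directly by passing between the vertex figures at $v$ and $w$, using the lower/upper cones of edges at each vertex.
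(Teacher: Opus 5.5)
\textbf{The proposal has a genuine gap: the converse half of Step~2 is false.} Transitivity of $\mathcal O$ does \emph{not} imply that $O^-$ is a stratification. So your candidate $(\ast)$ satisfies (ii) but not (i). The paper's Example~\ref{exm:nostrat} is a counterexample. There $P$ is the triangular bipyramid over $ACE$ with apexes $B$ (the source) and $D$ (the sink). For every two vertices $x,y$ with $\ell(x)<\ell(y)$ there is a face with bottom $x$ and top $y$, so $\mathcal O=\mathcal C$ is the linear order $B,A,C,E,D$. Yet your frontier condition fails for $G=BCE$ (top $E$), $v=C\in G$ and $F'=BAC$ (top $C$): no face with top $E$ contains $BAC$, because $F^-(E)=BCE\cup BEA$ is not a single face.

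The reason is structural. The relations $\mathcal O$, $\mathcal B^\pm$, $\mathcal C$ only record \emph{which vertices} lie in $F^-(w)$. The stratification property also depends on how the faces making up $F^-(w)$ fit together, and the two come apart exactly when $F^-(w)$ is reducible. Your reduction does work under Assumption~\ref{ass:irred}: the paper's proof of (1)$\Leftrightarrow$(6) in Theorem~\ref{thm:equivalences} does not use the present theorem, and (6) is invariant under $\ell\mapsto-\ell$. But the statement carries no irreducibility hypothesis. Your fallback, proving self-duality of the frontier condition ``by passing between vertex figures'', is a restatement of the theorem rather than an argument.

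Your sketch of the easy direction also has a hole. The smallest face $H$ of $G'$ containing $u$ and $w$ need not contain $F'$; in a tetrahedron with $F'$ a triangle it is just the edge $uw$. The paper's proof of that direction (Lemma~\ref{lem:O=C}) instead goes through $\dim F^-(v)+\dim F^+(v)=\dim P$ and chain rigidity.

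\textbf{What is missing is face-level rigidity.} The paper extracts it from the $O^-$-stratification and then proves $O^-\Rightarrow O^+$ directly, by induction on $\dim P$. The stratification yields the following facts:
\begin{enumerate}
\item $\mathcal C$ is graded by $\dim F^-$.
\item Every maximal saturated chain in a face $F$ has length $\dim F$, and any face containing such a chain contains $F$ (Corollary~\ref{cor: containing chains=containing faces}).
\item Any chain ending at $w$ lies in one face of $F^-(w)$ (Lemma~\ref{lem:face containing a chain}).
\item $\dim F^-(v)+\dim F^+(v)=\dim P$.
\item Every inclusion-maximal face of $F^+(v)$ contains the sink.
\end{enumerate}

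Given $v\in F^+(p)$ and a maximal face $W$ of $F^+(v)$, the induction runs as follows. Concatenate a saturated chain from $p$ to $v$ with a saturated chain in $W$ from $v$ to the sink, and drop the last edge. Place the result in a proper face $H\subseteq F^-(w)$, where $w$ is its new endpoint. Induction applied to $H$ gives a face of $F^+_H(p)$ containing the truncated chain. Enlarge this to a maximal face of $F^+(p)$; it contains the sink, hence the whole chain in $W$, hence $W$ by chain rigidity.

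Some argument of this kind, working with faces rather than vertex relations, is needed in place of your Step~2.
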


The structure becomes significantly simpler after imposing an irreducibility condition motivated by geometry. Let $F^+(v)$ and $F^-(v)$ be the closure of $O^+(v)$ and $O^-(v)$, respectively. 
In general $F^\pm(v)$ can be a union of faces; we single out the case when this does not happen.

\begin{assumptionI}
For every vertex $v$, both $F^-(v)$ and $F^+(v)$ are faces of $P$.
\end{assumptionI}

Under Irreducibility Assumption we obtain a collection of equivalent characterizations of the stratification property (Theorem~\ref{thm:equivalences}). In particular, $O^-$ and $O^+$ are stratifications  if and only if the witness relation $\mathcal O$ is an order relation (equivalently, $\mathcal O=\mathcal C$). In this case, the resulting vertex poset is graded with rank function
\[
\rho(v)=\dim F^-(v).
\]

A second focus of the paper is the \emph{monotone path polytope} $\CH(P)=\CH_\ell(P)$, which is a special case of a fiber polytope introduced in \cite{billera1992fiber}. Monotone path polytopes were extensively studied, see for instance \cite{athanasiadis1999piles,athanasiadis2000monotone,black2023polyhedral,poullot2024vertices}. In the projective toric setting (when $P$ is a very ample lattice polytope and $\ell$ has rational slope), $\CH(P)$ describes the normalization of the Chow quotient by the $\CC^*$-action determined by $\ell$, as observed in \cite{billera1992fiber, kapranov1991quotients}. In general, faces of $\CH(P)$ are described by sequences of faces of $P$ satisfying additional compatibility constraints (see Theorem~\ref{thm:facesCH}).

We then restrict attention to the regime in which the induced vertex relations behave well throughout.

\begin{assumptionS}
In addition to the Irreducibility Assumption, we assume that $O^-$ is a stratification, or equivalently, any of the equivalent conditions of Theorem~\ref{thm:equivalences} is satisfied.
\end{assumptionS}

Under the Stratification Assumption, the faces of $\CH(P)$ admit a dramatically simpler description: the additional compatibility condition in the general Billera--Sturmfels framework becomes redundant (Theorem~\ref{thm:facesofCH}).
We further characterize when $\CH(P)$ is simple (Theorem~\ref{thm:simpleCH}). 
As a consequence, if $P$ is simple and $O^-$ is a stratification, then $\CH(P)$ is simple (Corollary~\ref{cor:Psim then CH sim}). 

The final section connects these polyhedral structures back to poset invariants. Motivated by poset theory—specifically Stanley’s theory of kernels and Kazhdan--Lusztig--Stanley polynomials \cite{stanley1992subdivisions} and the recent notion of Chow functions/polynomials of posets \cite{ferroni2024chow,Proudfoot}—we obtain a concrete bridge between combinatorics and geometry in the simple $\CH(P)$ setting: we exhibit a natural kernel on the vertex poset and show that the resulting Chow polynomial computes the virtual Poincar\'e polynomial of the Chow quotient, identified as the $h$-polynomial of a dual monotone path polytope (Theorem~\ref{thm:Chow poly is h of Chow}).

\subsection*{Geometric motivations}
Bia{\l}ynicki-Birula in his fundamental work \cite{bialynicki1973some} introduced canonical decompositions of algebraic varieties endowed with a $\CC^*$--action. When the fixed point set is finite, each fixed point $v$ has an attracting cell consisting of those $x$ for which $\lim_{t\to\infty} t\cdot x=v$, and a repelling cell defined using $t\to 0$. For a projective toric variety $X_P$ associated to a polytope $P$, choosing such a one--parameter subgroup is equivalent to choosing a linear functional $\ell$ that is non--constant on edges. In this case the torus fixed points are the vertices of $P$, and the Bia{\l}ynicki-Birula cells coincide (via the moment map) with our polyhedral cells $O^-(v)$ and $O^+(v)$. Thus the stratification problem for $O^\pm$ is the toric avatar of the question when BB decompositions behave well.

The witness relation $\mathcal O$ and its transitive closure $\mathcal C$ have a direct geometric meaning: $\mathcal O(v,w)$ holds precisely when there exists a $\CC^*$--orbit whose closure connects the fixed points $v$ and $w$, and $\mathcal C$ records chains of such orbits. In this sense our main symmetry theorem $O^-$ stratifies $\Leftrightarrow O^+$ stratifies reflects a duality between attracting and repelling BB cells.

Beyond toric varieties, $\CC^*$--actions on flag varieties and Grassmannians lead to Bruhat and Richardson stratifications. For suitable actions the BB cells coincide with Schubert cells, motivating our introduction of Bruhat--type relations on vertices. Classically, Chow quotients of Grassmannians by $\CC^*$--actions produce important spaces such as the variety of complete collineations \cite{Thaddeus2}. Its toric counterpart is the fact that the Chow quotient of $(\mathbb P^1)^n$ by the diagonal $\CC^*$--action is the permutohedral variety \cite{billera1992fiber}, i.e. the toric variety of the monotone path polytope of the cube. Via the natural embedding $(\mathbb P^1)^n\hookrightarrow\mathrm{Gr}(n,2n)$ this identifies the permutohedral variety as a subvariety of the variety of complete collineations \cite{michalek2023enumerative, dinu2025applications}.

These ideas play a central role in recent work on compactifications of linear spaces. In particular, closures of linear subspaces in $(\mathbb P^1)^n$---arrangement Schubert varieties \cite{ardila2016closure}---may be singular, but their Chow quotients by suitable $\CC^*$--actions are the smooth wonderful models of matroids, a phenomenon highlighted in the groundbreaking work of Huh \cite{huh2014rota}. This striking improvement of singularities motivated our study of when the monotone path polytope $\CH(P)$ is simple. More generally, $\CC^*$--actions and their Chow quotients have been intensively studied in recent years, see for instance \cite{Kapranov, keel2004chow, gibney2011equations,  baker2015chow,OSCRW,michalek2021maximum,manivel2023complete,GmChowquotient,knutson2025stable}.


\subsection*{Organization of the paper}
Section~\ref{sec:polytope} introduces the relations on vertices and proves various equivalence conditions of the stratification assumption (Theorem~\ref{thm: O- strat iff O+} and Theorem~\ref{thm:equivalences}). Section~\ref{sec:Chowpolytope} studies monotone path polytopes under the Stratifcation Assumption and proves the simplicity criterion (Theorem~\ref{thm:simpleCH}). 
Section~\ref{sec:ker and KLS poly} develops the kernel/Kazhdan--Lusztig--Stanley and Chow polynomial consequences culminating in Theorem~\ref{thm:Chow poly is h of Chow}.

\vskip7pt
Independently and developed in parallel with our work, stratifications arising from Bia{\l}ynicki-Birula decompositions were studied in \cite{TeddyChayim}. That paper focuses on smooth varieties with $\CC^*$-action. While the two works overlap in certain cases---typically under additional assumptions---their results are complementary in general. 

\subsection*{Acknowledgements}
We thank the Institute for Advanced Study, where this project was initiated, and June Huh for their hospitality. We are also grateful to Lorenzo Vecchi and Tao Gui for helpful discussions. We thank Teddy Gonzales and Chayim Lowen for sharing their preprint and insightful ideas. In particular, following their observation, we replaced the section proving Remark \ref{rem:product of chains} with the closely related result Proposition \ref{prop:2faces=>simple}, which follows from \cite{Wiemeler, YuMasuda}.

Mateusz Micha\l ek  was partially supported by the Charles Simonyi Endowment at the Institute for Advanced Study and by the DFG grant 580118961. Botong Wang was partially supported by the NSF grant DMS-1926686.
Leonid Monin was partially supported by the DFG project number 539974215 and by SNSF grant 224099.

\section{Polytopes, stratifications, orders}\label{sec:polytope}

\subsection{Bruhat relations and stratifications}
Let $P\subset \R^n$ be a convex polytope. Let $\ell$ be a linear function on $\R^n$ that is nonconstant on the edges of $P$. We direct each edge of $P$ from the vertex on which $\ell$ attains smaller value to the vertex with larger value. We call the vertex at which $\ell$ attains minimal value over $P$ the \emph{source} and maximal value the \emph{sink}. For an edge $(v, w)$ of $P$, if $\ell(w)>\ell(v)$, we call $(v, w)$ a directed edge and denote by $v\rightarrow w$. 

\begin{definition}\label{def:cell}
\leavevmode
\begin{enumerate}[leftmargin=*]
    \item For each vertex $v$ of $P$ we define $O^-_P(v)$ (resp.~$O^+_P(v)$) as the union of relative interiors of all faces $G$ of $P$ such that $\ell$ attains its maximum (resp.~minimum) over $G$ at $v$.
    \item We define $F^-_P(v):=\overline{O^-_P(v)}$ and $F^+_P(v):=\overline{O^+_P(v)}$. Here, and throughout the paper, we use an overline to denote the closure. As the closure of a finite union of sets is the union of their closures, we see that $F^-_P(v)$ is the union of all faces $G$ such that $\ell$ attains its maximum on $G$ at $v$. 
    \item We define a binary relation $\OO_P$ on vertices, where $\OO_P(v,w)$ if and only if there exists a face $G$ of $P$ such that $\ell$ attains its maximum at $w$ and its minimum at $v$. We call such $G$ a \emph{witness} for $\OO_P(v,w)$. A witness does not have to be unique.
    \item We also define a \emph{negative} (resp. \emph{positive}) \emph{Bruhat relation} $\BB^-_P(v,w)$ (resp.~$\BB^+_P(v,w)$) by $v\in F^-_P(w)$ (resp. $w\in F^+_P(v)$). 
    \item Finally, we define the \emph{chain order} $\CCC_P$ to be the reflexive transitive closure of the relation given by having a directed edge. In other words, $\CCC_P(v, w)$ if there exists a chain of directed edges $v=v_0\rightarrow \cdots \rightarrow v_k=w$.
\end{enumerate}
 For $\OO_P$, $\BB_P^{\pm}$, $\CCC_P$, $F^{\pm}_P(v)$  and $O^{\pm}_P(v)$, we will often omit $P$ from the notation. 
\end{definition}
We start with a few basic observations about the relations introduced above. 
\begin{lemma}\label{lem:path}
Let $v$ be a vertex of a face $G$ of $P$. If $\ell$ attains its maximum on $G$ at $w$, then $\CCC(v,w)$.
\end{lemma}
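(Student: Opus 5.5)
The plan is to build the directed chain greedily inside $G$ by a simplex-method argument. Since $G$ is a face of $P$, it is itself a polytope. Its vertices and edges are vertices and edges of $P$, and $\ell$ is nonconstant on every edge of $G$. Moreover, the orientation of an edge of $G$ agrees with its orientation as an edge of $P$. So a chain of directed edges inside $G$ is a chain of directed edges of $P$, and it suffices to produce such a chain from $v$ to $w$ within $G$.

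The key step is the standard local-optimality fact for linear functionals on polytopes. If $u$ is a vertex of $G$ with $u\neq w$, then some edge of $G$ at $u$ is directed out of $u$, i.e.\ $u\to u'$ with $\ell(u')>\ell(u)$.

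\begin{itemize}
\item First note that $w$ is the unique maximizer of $\ell$ on $G$. The set of maximizers is a face of $G$, hence a face of $P$. If this face had positive dimension it would contain an edge on which $\ell$ is constant, which is excluded. So $\ell(u)<\ell(w)$.
\item Next use the fact that $G$ is contained in the cone $u+\mathrm{cone}\{u_i-u\}$, where the $u_i$ run over the neighbours of $u$ in the $1$-skeleton of $G$. This is the standard description of the tangent cone of a polytope at a vertex.
\item Writing $w-u=\sum_i \lambda_i(u_i-u)$ with $\lambda_i\ge 0$ and applying $\ell$ gives $0<\ell(w)-\ell(u)=\sum_i\lambda_i(\ell(u_i)-\ell(u))$. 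Hence some $\ell(u_i)>\ell(u)$.
\end{itemize}

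With this in hand, start at $u_0=v$. As long as $u_k\neq w$, choose an edge $u_k\to u_{k+1}$ of $G$. The values $\ell(u_k)$ strictly increase and $G$ has finitely many vertices, so the process terminates. It can only terminate at a vertex with no outgoing edge in $G$, which by the key step is $w$. The resulting chain $v=u_0\to\cdots\to u_k=w$ witnesses $\CCC(v,w)$; the case $v=w$ is covered by reflexivity.

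The only nontrivial ingredient is the tangent cone fact. I would simply cite it as standard polytope theory. Alternatively, it can be derived by intersecting $G$ with a hyperplane separating $u$ from the other vertices: that intersection is a polytope whose vertices lie on the edges at $u$.
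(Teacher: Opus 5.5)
Your proof is correct and follows the paper's argument: greedily follow outgoing edges inside $G$, using that every vertex of $G$ other than $w$ has an outgoing edge in $G$, so the chain must end at $w$. The only difference is that you also justify this key step (uniqueness of the maximizer, plus the tangent cone at a vertex being generated by its edge directions), which the paper simply asserts.
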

\begin{proof}
    As long as a vertex of $G$ is not $w$, it must have an outgoing edge in $G$. Thus, we can consecutively build a chain of edges  from $v$ and such a chain must finish at $w$.
\end{proof}
\begin{lemma}\label{lem:C=closO}
    The order $\CCC$ is the transitive closure of $\OO$.
\end{lemma}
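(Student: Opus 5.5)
We need to show that the reflexive transitive closure of $\OO$ equals $\CCC$. The plan is to prove the two inclusions separately.

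First, every directed edge is an instance of $\OO$. If $v\to w$ is a directed edge, the edge $G=[v,w]$ is itself a face of $P$. On it, $\ell$ attains its minimum at $v$ and its maximum at $w$, so $G$ is a witness for $\OO(v,w)$. The relation $\OO$ is also reflexive: the vertex $\{v\}$ is a face on which $\ell$ attains both its minimum and its maximum at $v$, so it witnesses $\OO(v,v)$. Hence the generating relation of $\CCC$ (directed edges together with reflexivity) is contained in $\OO$. Taking transitive closures gives $\CCC\subseteq \overline{\OO}$, where $\overline{\OO}$ denotes the transitive closure of $\OO$.

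Second, $\OO\subseteq\CCC$. Suppose $G$ is a witness for $\OO(v,w)$. Then $v$ is a vertex of $G$, and $\ell$ attains its maximum on $G$ at $w$. By Lemma~\ref{lem:path}, $\CCC(v,w)$. Since $\CCC$ is transitive by definition, it contains the transitive closure of $\OO$. Combining the two inclusions gives $\CCC=\overline{\OO}$.

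The only point needing care is the conventions. Both relations must be taken reflexively, which is why we check that $\OO(v,v)$ holds via the face $\{v\}$. We also need that edges are faces of $P$, so that a directed edge gives a witness. Beyond this there is no real obstacle; the substantive step is Lemma~\ref{lem:path}, which is already established.
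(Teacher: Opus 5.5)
Your proof is correct and follows the paper's argument. You show $\OO\subseteq\CCC$ via Lemma~\ref{lem:path} and the transitivity of $\CCC$, and conversely every directed edge witnesses $\OO$, so $\CCC$ is contained in the transitive closure of $\OO$. Your extra check that $\OO$ is reflexive via the face $\{v\}$ is a harmless detail that the paper leaves implicit.
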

\begin{proof}
   Clearly, $\CCC$ is an order relation. By Lemma \ref{lem:path}, we have $\OO\subset \CCC$. Thus, the transitive closure of $\OO$ is also contained in $\CCC$. Conversely, notice that for any directed edge $v\to w$, we have $\OO(v,w)$. Thus, $\CCC$ is contained in the transitive closure of $\OO$. 
\end{proof}

\begin{lemma}\label{lem:easyorderinclusions}
    We have $\OO\subseteq \BB^-\cap \BB^+$ and $\BB^-,\BB^+\subseteq \CCC$. 
\end{lemma}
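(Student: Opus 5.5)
The plan is to unwind the definitions and prove the two inclusions separately, using Lemma~\ref{lem:path} for the second one.

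\emph{First inclusion, $\OO\subseteq \BB^-\cap\BB^+$.} Suppose $\OO(v,w)$, and let $G$ be a witness: $\ell$ attains its minimum on $G$ at $v$ and its maximum at $w$.
\begin{itemize}
\item Since $\ell$ attains its maximum on $G$ at $w$, the relative interior of $G$ lies in $O^-(w)$. Hence $G\subseteq F^-(w)$, as recorded in Definition~\ref{def:cell}(2). Because $v$ is a vertex of $G$, we get $v\in F^-(w)$, which is $\BB^-(v,w)$.
\item Symmetrically, $\ell$ attains its minimum on $G$ at $v$, so $G\subseteq F^+(v)$. Since $w\in G$, this gives $\BB^+(v,w)$.
\end{itemize}
The degenerate case $v=w$, where $G$ is the vertex itself, is covered by the same argument.

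\emph{Second inclusion, $\BB^-\subseteq\CCC$.} Suppose $\BB^-(v,w)$, that is, $v\in F^-(w)$. By Definition~\ref{def:cell}(2), $F^-(w)$ is the union of the faces $G$ on which $\ell$ attains its maximum at $w$. So $v$ lies in some such face $G$. Since $v$ is a vertex of $P$ lying in $G$, it is a vertex of $G$. Lemma~\ref{lem:path} then gives $\CCC(v,w)$.

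\emph{Third inclusion, $\BB^+\subseteq\CCC$.} Suppose $\BB^+(v,w)$, that is, $w\in F^+(v)$. Then $w$ is a vertex of some face $G$ on which $\ell$ attains its minimum at $v$. Apply Lemma~\ref{lem:path} to the functional $-\ell$: the maximum of $-\ell$ on $G$ is attained at $v$, so there is a chain of edges from $w$ to $v$, directed by $-\ell$. Reversing this chain gives a chain directed by $\ell$ from $v$ to $w$, so $\CCC(v,w)$. Alternatively, one can run the argument of Lemma~\ref{lem:path} directly, following incoming edges from $w$ inside $G$ until reaching $v$.

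There is no real obstacle. The only point needing care is that a vertex of $P$ lying in a face $G$ is a vertex of $G$, which is standard.
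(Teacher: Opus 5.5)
Your proof is correct and follows essentially the same route as the paper. A witness $G$ for $\OO(v,w)$ lies in $F^-(w)\cap F^+(v)$, and the inclusions into $\CCC$ follow from Lemma~\ref{lem:path} applied to a face containing the relevant vertex; your reduction of the $\BB^+$ case to Lemma~\ref{lem:path} via $-\ell$ just spells out what the paper calls analogous.
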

\begin{proof}
    Let $G$ be a witness for $\OO(v,w)$. Then $v,w\in G\subset F^-(w)\cap F^+(v)$, which proves the first inclusion. 
If $\BB^-(v,w)$, then $v\in F^-(w)$, i.e.,~there exists a face $G$ in $F^-(w)$ containing $v$ and the function $\ell$ attains its maximum on $G$ at $w$. Applying Lemma \ref{lem:path} to $G$, we obtain $\BB^-\subset\CCC$. The proof of  $\BB^+\subset\CCC$ is analogous.
\end{proof}

Note that $O^-(v)$, as $v$ varies over the vertices of $P$, partitions $P$ into disjoint subsets, each containing precisely one vertex. We will refer to the partition $\{O^-(v)|v\in \verti P\}$ simply by $O^-$.
Recall that a partition of a topological space $X=\bigcup S_i$ is a \emph{stratification} if we have an implication $S_i\cap \overline{S_j}\neq\emptyset\Rightarrow {S_i}\subset \overline{S_j}$. In general, the restriction of a stratification to a closed subset may fail to be a stratification.
\begin{example}
    Let $X=[0,1]$ with stratification $S_1=(0,1)$ and $S_2=\{0,1\}$. Let $Y=[0,\frac{1}{2}]\cup \{1\}$. We see that $S_1\cap Y, S_2\cap Y$ do not form a stratification of $Y$.
\end{example}
\begin{lemma}\label{lem:induced stratification}
    Let $\{S_i\}$ be a stratification of a topological space $X$ and let $Y$ a be closed subset of $X$. Suppose that if  $S_i\cap Y\neq \emptyset$, then $\overline{S_i\cap Y}=\overline{S_i}\cap Y$. Then $\{S_i\cap Y\}$ is a stratification of $Y$.
\end{lemma}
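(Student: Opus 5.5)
We verify the stratification condition for the partition $\{S_i\cap Y\}$ of $Y$ directly, using only the hypothesis, the stratification property of $\{S_i\}$, and the closedness of $Y$. First, $\{S_i\cap Y\}$ (discarding empty pieces) is a partition of $Y$, since $\{S_i\}$ partitions $X$. Closures of subsets of $Y$ taken in $Y$ agree with closures taken in $X$, because $Y$ is closed. So for $A\subseteq Y$ the symbol $\overline{A}$ is unambiguous.

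Now suppose $(S_i\cap Y)\cap \overline{S_j\cap Y}\neq\emptyset$, where $S_j\cap Y\neq\emptyset$. We argue in three steps.
\begin{itemize}
\item By the hypothesis, $\overline{S_j\cap Y}=\overline{S_j}\cap Y$.
\item Therefore $S_i\cap \overline{S_j}\cap Y\neq\emptyset$, and in particular $S_i\cap\overline{S_j}\neq\emptyset$.
\item Since $\{S_i\}$ is a stratification of $X$, this gives $S_i\subseteq \overline{S_j}$.
\end{itemize}
Intersecting with $Y$ yields
\[
S_i\cap Y\subseteq \overline{S_j}\cap Y=\overline{S_j\cap Y},
\]
where the last equality is the hypothesis again. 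This is exactly the required implication, so $\{S_i\cap Y\}$ is a stratification of $Y$.

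There is no real obstacle here. The only points needing care are that the hypothesis is applied to $S_j$, which meets $Y$, and that closedness of $Y$ makes relative and ambient closures coincide.
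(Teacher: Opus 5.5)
Your proof is correct and follows essentially the same argument as the paper. From $S_i\cap\overline{S_j}\neq\emptyset$ you get $S_i\subseteq\overline{S_j}$, intersect with $Y$, and apply the hypothesis to $S_j$ (which meets $Y$). Your remark that closedness of $Y$ makes relative and ambient closures agree is a small clarification the paper leaves implicit. In your first step the trivial inclusion $\overline{S_j\cap Y}\subseteq\overline{S_j}$ would already suffice, so the hypothesis is only needed at the end.
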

\begin{proof}
    Suppose $(S_i\cap Y)\cap \overline{S_j\cap Y}\neq \emptyset$. In particular, $S_i$ must intersect $\overline{S_j}$ and thus $S_i\subset \overline{S_j}$. It follows that 
    \[
    S_i\cap Y\subset \overline{S_j}\cap Y.
    \]
    Since $S_j\cap Y\neq \emptyset$, we further have $\overline{S_j}\cap Y=\overline{S_j\cap Y}$. Therefore, 
    \[
    S_i\cap Y\subset \overline{S_j\cap Y},
    \]
    which completes the proof. 
\end{proof}
Under the assumptions of the lemma, we call $\{S_i\cap Y\}$ the \emph{induced stratification} on $Y$.
\begin{lemma}\label{lem:F-intersect gives F-}
    For any face $G$ of $P$ and any vertex $w$ of $G$ we have: 
    \begin{enumerate}
    \item $O^-_G(w)=O^-_P(w)\cap G$,
    \item $F^-_G(w)=F^-_P(w)\cap G$.
    \end{enumerate}
In particular, the assumption of Lemma \ref{lem:induced stratification} holds for the stratification $O^-$ of $P$ and the closed subset $G\subset P$. 
\end{lemma}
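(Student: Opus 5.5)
The plan is to compare both sides through the faces of $G$, using the fact that the faces of $G$ are exactly the faces of $P$ contained in $G$, and that for a face $H\subseteq G$ the vertex at which $\ell$ attains its maximum on $H$ does not depend on whether $H$ is regarded as a face of $G$ or of $P$.

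For (1), by Definition~\ref{def:cell}, $O^-_G(w)$ is the union of the relative interiors of the faces $H$ of $G$ whose $\ell$-maximum is at $w$. Each such $H$ is a face of $P$ with the same maximizer, so $O^-_G(w)\subseteq O^-_P(w)\cap G$. For the reverse inclusion, take $x\in O^-_P(w)\cap G$. Then $x$ lies in the relative interior of a unique face $H$ of $P$, namely the minimal face containing $x$, and the $\ell$-maximum of $H$ is attained at $w$. Since $G$ is a face containing $x$, the minimal face satisfies $H\subseteq G$, because faces are closed under intersection and $G\cap H$ is a face containing $x$. Hence $H$ is a face of $G$ maximized at $w$, and $x\in O^-_G(w)$. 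The only point needing care is that the relative interiors of the faces of $P$ partition $P$, and that the face carrying $x$ in its relative interior is the smallest face containing $x$.

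For (2), the inclusion $F^-_G(w)\subseteq F^-_P(w)\cap G$ follows by taking closures in (1), since $G$ is closed. The reverse inclusion is the step that needs an argument. By Definition~\ref{def:cell}(2), $F^-_P(w)\cap G$ is the union of the sets $K\cap G$ over faces $K$ of $P$ maximized at $w$. Each $K\cap G$ is a face of $P$ contained in $G$, and it contains $w$ because $w\in G$. Since $w$ maximizes $\ell$ on $K$, it also maximizes $\ell$ on the subset $K\cap G$, and $w$ lies in that subset. So $K\cap G$ is a face of $G$ maximized at $w$, and therefore $K\cap G\subseteq F^-_G(w)$. This is where the hypothesis that $w$ is a vertex of $G$ is used.

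For the final claim, we need $\overline{O^-_P(w)\cap G}=F^-_P(w)\cap G$ whenever $O^-_P(w)\cap G\neq\emptyset$. Each stratum $O^-_P(w)$ contains its vertex $w$, so any stratum meeting $G$ should be indexed by a vertex of $G$; we argue this directly. If $x\in O^-_P(w)\cap G$, the minimal face $H\ni x$ lies in $G$ and has maximizer $w$, so $w\in H\subseteq G$. Then (1) and (2) give
\[\overline{O^-_P(w)\cap G}=\overline{O^-_G(w)}=F^-_G(w)=F^-_P(w)\cap G,\]
which is exactly the hypothesis of Lemma~\ref{lem:induced stratification}.
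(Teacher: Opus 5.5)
Your proposal is correct and follows essentially the same route as the paper: part (1) identifies both sides with the union of relative interiors of faces of $G$ on which $\ell$ is maximized at $w$, and part (2) gets one inclusion by taking closures in (1) and the other by intersecting each face $K\subseteq F^-_P(w)$ with $G$. Your version is somewhat more explicit than the paper's, spelling out the minimal-face argument in (1) and checking that any stratum $O^-_P(w)$ meeting $G$ has $w\in G$, which the paper leaves implicit in the ``in particular'' claim.
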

\begin{proof}
For (1), both sides are equal to the union of relative interiors of faces $F\subset G$ such that $\ell$ attains its maximum on $F$ at $w$. 

For (2), by the first equality, we have
\[
F^-_G(w)=\overline{O^-_G(w)}=\overline{O^-_P(w)\cap G}\subset \overline{O^-_P(w)}\cap G=F^-_P(w)\cap G.
\]
To show $F^-_P(w)\cap G\subset F^-_G(w)$, choose a face $Q$ of $P$ such that $\ell$ attains maximum on $Q$ at $w$. We need to prove that $Q\cap G\subset F^-_G(w)$. Since $w\in G$, $\ell$ attains maximum on $Q\cap G$ at $w$. Since $Q\cap G$ is a face of $G$, it follows that $Q\cap G\subset F^-_G(w)$. 
%
%
\end{proof}
As a corollary we obtain the following. 
\begin{corollary}\label{cor:stratonface}
    If $O^-$ is a stratification on $P$, then it also induces a stratification on every face of $P$.
\end{corollary}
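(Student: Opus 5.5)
The plan is to apply Lemma~\ref{lem:induced stratification} with $X=P$, the stratification $\{O^-_P(w)\}_{w\in\verti P}$, and the closed subset $Y=G$ for an arbitrary face $G$ of $P$. We will check the hypothesis of that lemma, and then identify the induced strata with the intrinsic partition $O^-_G$ of $G$.

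First we verify the hypothesis $\overline{O^-_P(w)\cap G}=\overline{O^-_P(w)}\cap G$ whenever $O^-_P(w)\cap G\neq\emptyset$. Suppose $O^-_P(w)\cap G\neq\emptyset$. Each point of $P$ lies in the relative interior of a unique face, so some point of $G$ lies in the relative interior of a face $Q\subseteq G$ on which $\ell$ is maximized at $w$. Hence $w\in Q\subseteq G$, so $w$ is a vertex of $G$. Now Lemma~\ref{lem:F-intersect gives F-}(1) gives $O^-_P(w)\cap G=O^-_G(w)$. Taking closures and using part (2) of that lemma, we get
\[
\overline{O^-_P(w)\cap G}=F^-_G(w)=F^-_P(w)\cap G=\overline{O^-_P(w)}\cap G.
\]
This is exactly the hypothesis required by Lemma~\ref{lem:induced stratification}.

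That lemma then says $\{O^-_P(w)\cap G\}$ is a stratification of $G$. By the computation above, its nonempty members are precisely the sets $O^-_G(w)$ for $w\in\verti G$. Conversely, each such $O^-_G(w)$ is nonempty since it contains $w$. Therefore the partition $O^-_G$ of $G$ is a stratification, as claimed.

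There is no real obstacle here. The only point needing care is the observation that a nonempty intersection forces $w$ to be a vertex of $G$, since Lemma~\ref{lem:F-intersect gives F-} is stated only for vertices of $G$.
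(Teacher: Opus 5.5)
Your proposal is correct and follows the paper's route: the paper deduces the corollary directly from Lemma~\ref{lem:F-intersect gives F-}, which it notes verifies the hypothesis of Lemma~\ref{lem:induced stratification} for $Y=G$. Your extra check that a nonempty intersection $O^-_P(w)\cap G$ forces $w$ to be a vertex of $G$, and your identification of the induced strata with $O^-_G$, just make explicit what the paper leaves implicit.
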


\begin{lemma}
    If $O^-$ is a stratification, then $\BB^-$ coincides with $\CCC$. In particular, $\BB^-$ is an order relation. 
\end{lemma}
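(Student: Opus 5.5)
We already know $\BB^-\subseteq\CCC$ from Lemma~\ref{lem:easyorderinclusions}, so only the reverse inclusion $\CCC\subseteq\BB^-$ needs proof. Since $\CCC$ is the reflexive transitive closure of the directed-edge relation, it suffices to check three things. First, $\BB^-$ is reflexive. Second, $\BB^-$ contains every directed edge. Third, $\BB^-$ is transitive. Reflexivity is immediate: the face $\{v\}$ has its maximum at $v$, so $v\in F^-(v)$. For a directed edge $v\to w$, the edge itself is a face on which $\ell$ is maximized at $w$, so $v\in F^-(w)$. (Equivalently, $\OO(v,w)$ holds and $\OO\subseteq\BB^-$.)

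The real content is transitivity, and this is where the stratification hypothesis enters. Suppose $\BB^-(u,v)$ and $\BB^-(v,w)$, i.e.\ $u\in F^-(v)$ and $v\in F^-(w)$. The second condition says $v\in O^-(v)\cap\overline{O^-(w)}$, so this intersection is nonempty. Because $O^-$ is a stratification, we get $O^-(v)\subseteq\overline{O^-(w)}=F^-(w)$. Taking closures, and using that $F^-(w)$ is closed, gives $F^-(v)=\overline{O^-(v)}\subseteq F^-(w)$. Hence $u\in F^-(v)\subseteq F^-(w)$, which is $\BB^-(u,w)$.

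Combining the three properties, $\BB^-$ is a reflexive transitive relation containing all directed edges, so $\CCC\subseteq\BB^-$ and therefore $\BB^-=\CCC$. Since $\CCC$ is an order relation, so is $\BB^-$. There is no real obstacle here. The only point needing care is that the stratification condition is applied to the vertex $v$ itself, using $v\in O^-(v)$; this holds because the relative interior of the face $\{v\}$ is $\{v\}$.
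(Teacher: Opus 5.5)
Your proof is correct and is essentially the paper's argument. The paper runs a decreasing induction along a chain $v_0\to\cdots\to v_n=w$: at each step, $v_i\in O^-(v_i)\cap F^-(w)$ forces $O^-(v_i)\subseteq F^-(w)$, hence $v_{i-1}\in F^-(w)$; your decomposition into reflexivity, containment of directed edges, and transitivity of $\BB^-$ simply repackages that same key step.
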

\begin{proof}
  By Lemma \ref{lem:easyorderinclusions}, we only have to prove $\CCC\subset \BB^-$. Assume $\CCC(v,w)$, that is, there exists a chain of directed edges $v=v_0\rightarrow \dots\rightarrow v_n=w$. Suppose that $v_i\in F^-(w)$. Since $O^-$ is a stratification, it follows that $O^-(v_i)\subset F^-(w)$, and hence $v_{i-1}\in F^-(w)$. Thus, using decreasing induction on $i$, we can deduce that every $v_i\in F^-(w)$. Therefore, we have $\BB^-(v,w)$.
\end{proof}
\begin{example}\label{exm:nostrat}
    The converse of the previous lemma is not true. Let $P$ be a three dimensional polytope with five vertices $A,B,C,D,E$ as shown on the picture below.

\begin{center}
\tdplotsetmaincoords{70}{30} 

\begin{tikzpicture}[tdplot_main_coords,>=latex]

\coordinate (A) at (-0.8, 0, 0);  
\coordinate (B) at (0.7, -0.2, 0);
\coordinate (C) at (1.6, 2.7, 0.5);
\coordinate (D) at (1, 1, 3);   
\coordinate (E) at (1, 1, 1.5); 

\draw[dashed,->] (A) -- (C); 

\draw[->] (B) -- (C); 
\draw[->] (B) -- (A); 
\draw[->] (A) -- (D); 
\draw[->] (C) -- (D); 
\draw[->] (A) -- (E); 
\draw[->] (B) -- (E); 
\draw[->] (C) -- (E); 
\draw[->] (E) -- (D); 

\node[label=left:A] at (A) {};
\node[label=below:B] at (B) {};
\node[label=right:C] at (C) {};
\node[label=above:D] at (D) {};
\node[label=above left:E] at (E) {};
\end{tikzpicture}
\end{center}
Here $\CCC$, $\BB^-$, $\BB^+$ and $\OO$ all coincide with the linear order $B,A,C,E,D$. 
However, neither $O^+$ nor $O^-$ is a stratification. For example, $C\in F^-(E)$, but $F^-(C)$ contains the face $ABC$, which is not contained in $F^-(E)$. Note that in this case $F^-(E)$ is reducible. We will later see that such examples are not possible when all $F^-(v)$ and $F^+(v)$ are irreducible for every vertex $v$.
\end{example}
\begin{lemma}\label{lem:face containing a chain}
    Assume $O^-$ is a stratification. Given any chain of directed edges
    \[
    v_0\rightarrow\dots\rightarrow v_k
    \]
    in $P$, there exists a face in $F^-(v_k)$ containing all vertices $v_0, \dots, v_k$.
\end{lemma}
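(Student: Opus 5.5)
Induction on $k$ gives the natural argument. For $k=0$ the face $\{v_0\}$ works, since $\ell$ attains its maximum on it at $v_0$, so it lies in $F^-(v_0)$. For $k=1$ the edge $[v_0,v_1]$ works, since $\ell$ is maximized on it at $v_1$.

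For the inductive step, assume the statement for chains of length $k-1$. Apply it to the chain $v_1\to\dots\to v_k$. This gives a face $G$ with $\ell$ attaining its maximum on $G$ at $v_k$ and $v_1,\dots,v_k\in G$. It remains to enlarge $G$ so that it also contains $v_0$ while $v_k$ stays the maximum.

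Since $v_0\to v_1$ is an edge, the point $v_0$ lies in the closure of the relative interior of that edge, and this edge belongs to $O^-(v_1)$. Because $v_1\in G\subset F^-(v_k)$, the stratum $O^-(v_1)$ meets $F^-(v_k)$. As $O^-$ is a stratification, $O^-(v_1)\subset F^-(v_k)$, and taking closures gives $F^-(v_1)\subset F^-(v_k)$. In particular the edge $[v_0,v_1]$ lies in $F^-(v_k)$. This alone is not enough: we need one single face containing everything.

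The main obstacle is therefore to show that the smallest face $H$ containing $G\cup\{v_0\}$ still has its $\ell$-maximum at $v_k$. I would take $H$ to be the smallest face containing $G$ and the edge $[v_0,v_1]$, and take a point $x$ in the relative interior of $H$. A convex combination of a relative interior point of $G$ and a relative interior point of the edge lies in $\operatorname{relint} H$, since $H$ is the minimal face containing both. Then $x$ lies in $O^-(u)$, where $u$ is the $\ell$-maximum of $H$.

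The goal is to show $\operatorname{relint}H$ meets $F^-(v_k)$. The stratification property would then give $O^-(u)\subset F^-(v_k)$, and so $u\in F^-(v_k)$, meaning $\BB^-(u,v_k)$. Combined with $\CCC(v_k,u)$ from Lemma~\ref{lem:path}, antisymmetry of $\CCC$ forces $u=v_k$.

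To get a point of $F^-(v_k)$ in $\operatorname{relint}H$, I would argue as follows. The union $F^-(v_k)$ is a finite union of faces containing both $G$ and the edge. I would try to choose the point $x$ near $v_1$, with $x$ close to both $\operatorname{relint}G$ and the edge. If that fails, I would instead induct differently: apply Corollary~\ref{cor:stratonface} to reduce to the face $H$ itself. Inside $H$ we have $\operatorname{relint}H\subset O^-_H(u)$, and $O^-_H(u)$ meets $F^-_H(v_k)\ni v_1$, since $v_1$ lies in the closure of $\operatorname{relint}H$. The stratification property on $H$ then gives $O^-_H(u)\subset F^-_H(v_k)$, so $H=F^-_H(v_k)$ and $u=v_k$. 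This second route looks cleaner, and I would pursue it first.
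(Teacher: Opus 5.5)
\paragraph{There is a genuine gap: the inference in your second route is invalid.} Your base cases and the observation $F^-(v_1)\subset F^-(v_k)$ are fine. The problem is the claim that $O^-_H(u)$ meets $F^-_H(v_k)$ because $v_1\in F^-_H(v_k)$ lies in the closure of $\operatorname{relint} H$.

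\begin{itemize}
\item This only shows that the closure $F^-_H(u)$ (here all of $H$) meets $F^-_H(v_k)$.
\item The stratification axiom needs the stratum $O^-_H(u)$ itself to meet $F^-_H(v_k)$.
\item But $v_1$ lies in $O^-_H(v_1)$, not in $O^-_H(u)$. Applied correctly at $v_1$, the axiom only gives $O^-_H(v_1)\subset F^-_H(v_k)$, which you already knew.
\end{itemize}

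Your reasoning never uses $v_0$ or the minimality of $H$, so it would prove something false. Take a triangle with $\ell(C)<\ell(B)<\ell(A)$, where $O^-$ is a stratification. Set $G=[C,B]$, $v_1=C$, $v_k=B$, and let $H$ be the whole triangle. Every fact you use holds, yet $u=A\neq B$.

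The first route is circular. A point of $\operatorname{relint}H$ lies in $F^-(v_k)$ exactly when $H$ is contained in a face on which $\ell$ peaks at $v_k$, which is what you are trying to prove. Being close to $G$ and to the edge does not help.

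\paragraph{The fix is to induct from the other end of the chain.} Adding a new minimum $v_0$ to a face maximized at $v_k$ is hard, but adding a new maximum is free. This is how the paper argues.

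\begin{itemize}
\item Apply the induction hypothesis to the prefix $v_0\to\cdots\to v_{k-1}$. This gives a face $G\subset F^-(v_{k-1})$ containing $v_0,\dots,v_{k-1}$.
\item The argument you used for $F^-(v_1)\subset F^-(v_k)$, applied to the edge $v_{k-1}\to v_k$, gives $F^-(v_{k-1})\subset F^-(v_k)$.
\item So $G$ lies in the finite union of faces on which $\ell$ peaks at $v_k$. A face contained in a finite union of faces lies in one of them, so $G\subset G'$ for some face $G'$ on which $\ell$ attains its maximum at $v_k$.
\item Such a $G'$ automatically contains $v_k$, which closes the induction.
\end{itemize}

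This route never requires controlling the minimal face $H$.
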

\begin{proof}
    We argue by induction on $i$. The statement is trivial for $i=0$. Assume it holds for $i-1$, and $G\subset F^-(v_{i-1})$ is a face containing $v_0,\dots, v_{i-1}$. Since $O^-$ is a stratification, we have $F^{-}(v_{i-1}) \subset F^{-}(v_i)$. Hence, there exists a face $G' \subset F^{-}(v_i)$ containing $G$ such that $\ell$ attains its maximum on $G'$ at $v_i$. In particular, $v_i \in G'$, and hence $G'$ contains all vertices $v_0,\dots,v_i$, completing the inductive step.
\end{proof}

Given a face $F$ of $P$, a directed edge $v\to w$ in $F$ is called \emph{saturated} if there does not exist a chain of at least two directed edges from $v$ to $w$ in $F$. Moreover, we call a chain of directed edges \emph{saturated} in $F$ if each of its directed edge is saturated in $F$. Obviously, given a chain of directed edges in a face $F$ from $v$ to $w$, there exists a saturated chain of directed edges in $F$ from $v$ to $w$ which contains every vertex of the original chain.

The notion of saturated is relative to the face $F$. For example, the edge $C\to D$ in Example \ref{exm:nostrat} is saturated in the face $ACD$, but not in the face $CDE$. Nevertheless, if a chain is saturated in a face $F$ and belongs to a face $G\subset F$, then it is also saturated in $G$. 

We note that in Example \ref{exm:nostrat}, although the poset is graded, it is not graded with $\rho(v)=\dim F^-(v)$. This motivates the following lemma.
\begin{lemma}\label{lem:gradedposet}
    Assume $O^-$ is a stratification. Then $\BB^-=\CCC$ defines a graded poset with grading defined by $\rho(v)=\dim F^-(v)$. 
\end{lemma}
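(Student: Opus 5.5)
The plan is to check the two defining properties of a graded poset with rank function $\rho(v)=\dim F^-(v)$. First, $\rho$ vanishes on the minimal element. Second, $\rho$ increases by exactly one along every cover relation of $\CCC$. The equality $\BB^-=\CCC$ is already given by the preceding lemma, so only the grading needs proof.

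\emph{Minimal element and covers.} For the source $\hat 0$, every face on which $\ell$ attains its maximum at $\hat 0$ is the point $\hat 0$ itself. Hence $F^-(\hat 0)=\{\hat 0\}$ and $\rho(\hat 0)=0$. Since $\CCC$ is the reflexive transitive closure of the directed-edge relation, any cover $v\lessdot w$ in $\CCC$ must be a directed edge $v\to w$. Moreover, because it is a cover, there is no vertex $u\neq v,w$ with $\CCC(v,u)$ and $\CCC(u,w)$. So it suffices to fix such an edge, set $d=\dim F^-(v)$, and prove $\dim F^-(w)=d+1$.

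\emph{Lower bound.} Choose a face $G\subset F^-(v)$ of dimension $d$ on which $\ell$ attains its maximum at $v$. Since $O^-$ is a stratification and $v\in F^-(w)$ (the edge $vw$ is a face with maximum $w$), we get $G\subset F^-(v)\subset F^-(w)$. Then $G$ lies in some face $H$ on which $\ell$ attains its maximum at $w$; this is the face-choosing step used in Lemma~\ref{lem:face containing a chain}. Let $K$ be the smallest face of $P$ containing $G\cup\{w\}$. Then $K\subset H$ and $w\in K$, so $\ell$ attains its maximum on $K$ at $w$, which gives $K\subset F^-(w)$. Since $w\notin G$ (its $\ell$-value exceeds $\ell(v)$), $K$ strictly contains $G$. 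Hence $\dim F^-(w)\ge \dim K\ge d+1$.

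\emph{Upper bound (the main step).} Take a face $H\subset F^-(w)$ of maximal dimension $e$ with maximum at $w$, and containing the edge $vw$. Such an $H$ exists: by the lower-bound argument, any top-dimensional face with maximum at $w$ can be enlarged, if necessary, to the smallest face containing it and $v$. That face still has its maximum at $w$, because $v\in F^-(w)$ and stratification allow us to place $v$ together with it inside a single face with maximum $w$. This step needs care, and I expect it to be the main obstacle. A cleaner route is to replace $H$ by a top face of $F^-_H(w)$ via Lemma~\ref{lem:F-intersect gives F-}.

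Given such an $H$, the two distinct vertices $v,w$ of $H$ cannot lie on the same set of facets of $H$. So there is a facet $\Phi$ of $H$ with $v\in\Phi$ and $w\notin\Phi$. Let $u$ be the vertex of $\Phi$ maximizing $\ell$. Lemma~\ref{lem:path} applied to $\Phi$ gives $\CCC(v,u)$, and applied to $H$ gives $\CCC(u,w)$. The cover condition forces $u\in\{v,w\}$, and $u\neq w$ because $w\notin\Phi$; hence $u=v$. Thus $\Phi\subset F^-(v)$, so $e-1\le d$. Combined with the lower bound, $\dim F^-(w)=d+1$, which completes the proof.
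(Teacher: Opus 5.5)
Your lower bound is correct and is essentially the paper's first step (strict increase of $\dim F^-$ along directed edges). Your facet argument also correctly shows that every face $H$ on which $\ell$ attains its maximum at $w$ and which \emph{contains} $v$ has $\dim H\le d+1$. The gap is the passage from this to $\dim F^-(w)\le d+1$. For that you need a face of maximal dimension in $F^-(w)$ that contains $v$, and the justification you give is false. Stratification only yields $F^-(v)\subset F^-(w)$, i.e.\ each face with maximum at $v$ lies in \emph{some} face with maximum at $w$. It does not let you adjoin $v$ to a \emph{given} face with maximum at $w$, because $F^-(w)$ need not be irreducible here.

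For instance, let $P$ be the square pyramid with base $ABCD$ (in cyclic order) and apex $X$, with $\ell(A)<\ell(B)<\ell(D)<\ell(X)<\ell(C)$. Then $F^-(A)=\{A\}$, $F^-(B)=AB$, $F^-(D)=AD$, $F^-(X)=XAB\cup XDA$ and $F^-(C)=P$. Each of these is a union of strata, so $O^-$ is a stratification, and $B\to X$ is a cover. But $XDA$ is a top-dimensional face of $F^-(X)$ not containing $B$, and the smallest face containing $XDA$ and $B$ is $P$, whose maximum is at $C$.

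Your ``cleaner route'' is vacuous, since $F^-_H(w)=H$ whenever $\ell$ attains its maximum on $H$ at $w$. The statement you actually need---that all inclusion-maximal faces of $F^-(w)$ have the same dimension---is Lemma~\ref{lem: F^- pure}. The paper derives that lemma \emph{from} the present one, so citing it would be circular. More fundamentally, an argument local to one cover $v\to w$ cannot control faces of $F^-(w)$ that avoid $v$. One has to compare different lower covers of $w$, and that is the chain-length property itself.

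The paper supplies this via a global argument. It proves, by induction on $\dim P$ (using Corollary~\ref{cor:stratonface}), that every maximal saturated chain $v_0\to\cdots\to v_k$ from the source to the sink has $k=\dim P$:
\begin{itemize}
\item Lemma~\ref{lem:face containing a chain} gives a face $G$ with maximum at $v_{k-1}$ containing $v_0,\dots,v_{k-1}$.
\item Induction gives $\dim G=k-1$, which makes $G$ inclusion-maximal in $F^-(v_{k-1})$.
\item If $G$ were not a facet, a facet containing $G$ but not $v_k$ would have its maximum at some $u\neq v_{k-1}$. This produces a chain through $u$ from $v_{k-1}$ to $v_k$, contradicting saturation. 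This is the same trick as your facet step.
\end{itemize}
Since $\rho(\hat 0)=0$, $\rho(\hat 1)=\dim P$, and $\rho$ strictly increases along each of the $\dim P$ edges of any maximal chain through a given cover, every cover raises $\rho$ by exactly one. You can keep your lower bound as the strict-increase step; it is the upper bound that must be replaced by this chain-length argument.
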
 
\begin{proof}
    First we claim that $\dim F^-(v)$ strictly increases along directed edges. In fact, given a directed edge $v\rightarrow w$, the assumption that $O^-$ is a stratification implies 
    \[
    F^-(v)\subset F^-(w).
    \]
    Let $G$ be a face in $F^-(v)$ of maximal dimension. Then $G$ is contained in some face $G'$ in $F^-(w)$. Since $w\in G'$ but $w\not \in G$, we have $G\neq G'$, and hence $\dim G<\dim G'$. This proves the claim.

    We now show that all maximal saturated chains of directed edges have length $\dim P$. The proof proceeds by induction on $\dim P$. By Corollary \ref{cor:stratonface}, we may assume the statement holds for all proper faces of $P$. 
    Let 
    \[
    v_0\rightarrow\dots\rightarrow v_k
    \]
    be a saturated chain of directed edges, where $v_0$ is the source and $v_k$ is the sink of $P$. 


    By Lemma \ref{lem:face containing a chain}, there exists a face $G \subset F^{-}(v_{k-1})$ containing
    $v_0,\dots,v_{k-1}$. Since $G$ is a proper face of $P$, the inductive hypothesis implies that
    any saturated chain in $G$ has length $\dim G$. The chain
    \[
    v_0 \rightarrow \cdots \rightarrow v_{k-1}
    \]
    is saturated in $P$, and hence also saturated in $G$. Therefore, $\dim G=k-1$. In particular, any face of $F^{-}(v_{k-1})$ containing all vertices $v_0,\dots,v_{k-1}$ must have dimension at least $k-1$. Since $G$ already has this dimension, it follows that $G$ is
    inclusion-maximal among faces of $F^{-}(v_{k-1})$.

    We now show that $G$ is a facet of $P$. Suppose otherwise. Since every proper face is the intersection of all facets containing it and since $v_k\notin G$, $G$ is contained in a facet $G''$ of $P$ that does not contain $v_k$. By maximality of $G$ in
$F^{-}(v_{k-1})$, the function $\ell$ must attain its maximum on $G''$ at some vertex
$u \neq v_{k-1}$. By Lemma \ref{lem:path}, there exists a directed chain from $v_{k-1}$ to $u$, and another
directed chain from $u$ to $v_k$. This is a contradiction to $v_0\rightarrow\dots\rightarrow v_k$ being a saturated chain.
    \end{proof}
    \begin{corollary}\label{cor: containing chains=containing faces}
        If $O^-$ is a stratification, then each maximal saturated chain of directed edges  in a face $F$ has length $\dim F$. Further, if a face $G$ contains a maximal, saturated chain of edges in a face $F$, then $G$ contains $F$.
    \end{corollary}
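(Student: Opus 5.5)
Both statements follow from Lemma~\ref{lem:gradedposet} applied face by face, together with the fact that a maximal saturated chain in $F$ runs from the source of $F$ to the sink of $F$.

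\textbf{First statement.} By Corollary~\ref{cor:stratonface}, $O^-$ induces a stratification on every face $F$, and $O^-_F$ is exactly the partition defined intrinsically for the polytope $F$ with the functional $\ell|_F$. So Lemma~\ref{lem:gradedposet} applies to $F$ itself. Its proof shows that every saturated chain in $F$ from the source of $F$ to the sink of $F$ has length $\dim F$. It remains to check that a maximal saturated chain in $F$ does run from source to sink. If it did not start at the source, its first vertex $v_0$ would not be the minimum of $\ell$ on $F$. Then $v_0$ has an incoming edge in $F$, and the chain could be prolonged and re-saturated, contradicting maximality. The symmetric argument, using Lemma~\ref{lem:path}, handles the end vertex.

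\textbf{Second statement.} Let $G$ be a face containing a maximal saturated chain $v_0\to\dots\to v_k$ of $F$, where $k=\dim F$. Replacing $G$ by $G\cap F$, which is still a face and still contains the chain, we may assume $G\subseteq F$. The chain lies in $G$ and is saturated in $F$, hence saturated in $G$. Since $v_0$ is the source of $F$ and lies in $G$, it is also the source of $G$; likewise $v_k$ is the sink of $G$. So the chain is a maximal saturated chain in $G$, and by the first statement its length is $\dim G$. Thus $\dim G=k=\dim F$, and with $G\subseteq F$ this forces $G=F$.

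The only step needing care is the intrinsicness used above: the induced stratification on $F$ must coincide with the partition $O^-_F$, so that Lemma~\ref{lem:gradedposet} really applies to $F$. This is exactly what Lemma~\ref{lem:F-intersect gives F-} provides.
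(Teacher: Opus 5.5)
Your proposal is correct and follows the paper's own argument. The first part applies Lemma~\ref{lem:gradedposet} to $F$ via Corollary~\ref{cor:stratonface}; the second replaces $G$ by $G\cap F$ and compares dimensions. Beyond the paper, you also justify two steps it leaves implicit: that a maximal saturated chain in $F$ runs from the source to the sink of $F$, and that it remains maximal and saturated in $G\cap F$.
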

\begin{proof}
    For the first part, by Corollary \ref{cor:stratonface}, we may apply Lemma \ref{lem:gradedposet} to the face $F$. For the second part, we consider the face $G\cap F$. It is contained in $F$, but has dimension equal to the length of the saturated path, which is also equal to $\dim F$. Thus $G\cap F=F$.
\end{proof}

    \begin{lemma}\label{lem:dimsum=dimP}
    For any vertex $v$ of $P$, we have 
    \[
    \dim F^+(v)+\dim F^-(v)\geq \dim P.
    \]
    Moreover, if $O^-$ is a stratification, then equality holds.
\end{lemma}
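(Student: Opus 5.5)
We prove the inequality first, by looking at the tangent cone of $P$ at $v$. Let $e_1,\dots,e_m$ be the edge directions at $v$. The function $\ell$ is nonzero on each $e_i$, so the edges split into outgoing edges ($\ell(e_i)>0$) and incoming edges ($\ell(e_i)<0$). Consider the smallest face $G^+$ containing $v$ and all outgoing edges at $v$, and the smallest face $G^-$ containing $v$ and all incoming edges. Since every edge of $P$ at $v$ lies in $G^+$ or in $G^-$, their directions span the whole tangent cone, whose linear span has dimension $\dim P$. Hence
\[
\dim G^+ + \dim G^- \geq \dim P.
\]

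It then suffices to show $\dim F^-(v)\geq \dim G^-$ and $\dim F^+(v)\geq \dim G^+$. For the first, I would look for a face of dimension at least $\dim G^-$ on which $\ell$ is maximized at $v$. The natural candidate uses the normal fan. The normal cone of $v$ has nonempty interior and contains $\ell$ in its interior, so pick $u$ in the interior of this normal cone. Perturb $u$ toward $\ell$'s negative direction in a controlled way, e.g.\ consider the face of the cone $T_vP\cap\{\ell\le 0\}$. A cleaner route is to take the face $Q$ of $P$ at $v$ maximizing a functional $u$ that is orthogonal to all incoming edge directions and positive on the outgoing ones. One has to be careful here: such $u$ exists only for the cone generated by the outgoing edges and the face structure. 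So I would instead argue via the polytope $P\cap\{\ell\le \ell(v)\}$ near $v$. Its vertex figure at $v$ is the part of the vertex figure of $P$ cut by $\ell\le\ell(v)$, and its faces give faces of $P$ with maximum at $v$. I expect this construction of a large face in $F^-(v)$ to be the main obstacle. If a direct construction fails, I would replace $F^-(v)$ by a dimension count. The set $O^-(v)$ contains all points $x$ near $v$ with $\ell(x)<\ell(v)$ in the interior of the tangent cone intersected with the halfspace. That set is open in $P\cap\{\ell<\ell(v)\}$ locally, hence has dimension $\dim P$ unless $v$ is the source. This would give $\dim F^-(v)$ directly, but it overshoots, so the right statement pairs the local dimension of $O^-(v)$ with that of $O^+(v)$. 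The honest approach is therefore a transversality count. Near $v$, the sets $F^-(v)$ and $F^+(v)$ are unions of cones in the tangent cone. Every point $x\in P$ near $v$ lies in a face $H$ containing $v$. The face $H$ has its max and min, and $x$ can be written as $v+a+b$ with $a$ in $F^-(v)$-cones and $b$ in $F^+(v)$-cones, by decomposing $H$'s vertex cone at $v$ into its incoming and outgoing parts. This gives $T_vP\subset \bigcup(\text{cones of }F^-(v))+\bigcup(\text{cones of }F^+(v))$, and counting dimensions yields the inequality.

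For the equality under the stratification assumption, use Lemma \ref{lem:gradedposet}, which gives $\dim F^-(v)=\rho(v)$, the length of any saturated chain from the source $\hat 0$ to $v$. I would show that $O^+$ is then a stratification as well; this follows either from Theorem \ref{thm: O- strat iff O+} if it is available, or directly in the graded setting. With that in hand, Lemma \ref{lem:gradedposet} applied to $-\ell$ gives $\dim F^+(v)$ as the length of a saturated chain from $v$ to $\hat 1$. Concatenating a saturated chain from $\hat 0$ to $v$ with one from $v$ to $\hat 1$ gives a maximal saturated chain in $P$. By Corollary \ref{cor: containing chains=containing faces}, this chain has length $\dim P$, so the two dimensions sum to $\dim P$. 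The delicate point is that the concatenated chain is still saturated in $P$. Each piece is saturated in $P$ by construction, since saturatedness is defined edgewise in $P$, so this concatenation works.
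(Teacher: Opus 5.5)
Both halves of your proposal have a genuine gap.

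\textbf{The inequality.} Your first step, $\dim G^++\dim G^-\ge\dim P$, is correct. The reduction to $\dim F^-(v)\ge\dim G^-$, however, is false. Take the pyramid $P$ over a pentagon $b_1b_2b_3b_4b_5$ with apex $v$, and a generic $\ell$ with $\ell(b_1),\ell(b_2),\ell(b_3)<\ell(v)<\ell(b_4),\ell(b_5)$.

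\begin{itemize}
\item The incoming edges $vb_1,vb_2,vb_3$ lie in no common proper face, so $G^-=P$ has dimension $3$.
\item The faces with maximum at $v$ are only $v$, the edges $vb_1,vb_2,vb_3$, and the triangles $vb_1b_2$ and $vb_2b_3$. Hence $F^-(v)$ is the union of these two triangles and $\dim F^-(v)=2$.
\end{itemize}

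The same example breaks your fallback ``transversality'' step. For $H=P$, splitting the vertex cone of $H$ into its incoming and outgoing parts puts $a$ in the $3$-dimensional cone spanned by $b_1-v,b_2-v,b_3-v$. That cone is not contained in the union of the cones over the two triangles. So the covering $T_vP\subset\bigcup(\text{cones of }F^-(v))+\bigcup(\text{cones of }F^+(v))$ is asserted without a valid proof. Granted the covering, your dimension count would finish the argument, but the covering carries the entire content of the inequality.

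What is missing is a genuinely inductive argument. The paper proceeds as follows:
\begin{itemize}
\item Pass to a general cross-section $R$ of the tangent cone $C$ at $v$.
\item Let $w$ be the vertex of $R$ with the \emph{smallest positive} value of $\ell$, and apply induction to the tangent cone of $R$ at $w$ with the functional $\ell-\ell(w)$.
\item The face on the positive side lifts to a face of $C$ of one dimension more, with all rays positive.
\item The face on the negative side lifts to a face of $C$ whose only positive ray is the one through $w$, by minimality of $\ell(w)$. A facet of it avoiding that ray is a face of $C$ on the negative side.
\end{itemize}
Adding the dimensions gives at least $\dim C$.

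\textbf{The equality.} Your identification, via Lemma~\ref{lem:gradedposet}, of $\dim F^-(v)$ with the length of a saturated chain from $\hat 0$ to $v$ is fine. Obtaining the analogous statement for $F^+(v)$ by invoking Theorem~\ref{thm: O- strat iff O+} is circular, though.
\begin{itemize}
\item That theorem is proved via Corollary~\ref{cor:max F^+ have sink} and Lemma~\ref{lem: F^+ pure}, and the latter applies the present lemma explicitly.
\item It also uses Corollary~\ref{cor:max F^- have source}, which depends on the present lemma through Lemma~\ref{lem: F^- pure}, Corollary~\ref{cor:gradthen graded} and Lemma~\ref{lem:O=C}.
\end{itemize}
Your alternative, ``directly in the graded setting'', is not an argument.

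You do not need $O^+$ at all:
\begin{itemize}
\item Take faces $G\subset F^-(v)$ and $G'\subset F^+(v)$ of maximal dimension.
\item By Corollary~\ref{cor: containing chains=containing faces}, applied inside $G$ and $G'$ (which inherit the stratification by Corollary~\ref{cor:stratonface}), there are saturated chains in $G$ ending at $v$ and in $G'$ starting at $v$, of lengths $\dim F^-(v)$ and $\dim F^+(v)$.
\item Their concatenation is a directed chain in $P$. Refining and extending it to a maximal saturated chain can only increase its length, and that length is $\dim P$ by Lemma~\ref{lem:gradedposet}.
\end{itemize}
This gives $\dim F^-(v)+\dim F^+(v)\le\dim P$. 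Equality then follows only by combining this with the inequality from the first part, so the gap there must be closed for the second part as well.
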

\begin{proof}
    We prove the first statement by induction on the dimension of the polytope. The equivalent statement is that given a pointed strongly convex polyhedral cone $C$ and a linear function $\ell$ which zero locus does not contain any rays, there exist two faces $F_1, F_2$ of $C$ on opposite sides of $\ell=0$ such that $\dim F_1+\dim F_2\geq \dim C$. 

    Let $R$ be a polytope obtained as a general cross section of $C$. In particular, the restriction of $\ell$ to $R$ is nonconstant along every edge. We have $\dim R=\dim C-1$. If $R$ is contained in one side of $\ell=0$, then the statement is obvious. So we assume that $R$ has vertices in both halfspaces $\{\ell>0\}$ and $\{\ell<0\}$. Let $w$ be a vertex of $R$ with minimal positive value of $\ell$ and let $R_w$ be the cone generated by $R-w$. 
    Let $\ell'$ be the linear function on $R_w$ defined by $\ell-\ell(w)$. By induction, we obtain a face $F_1'$ in $\{\ell'\geq 0\}$ and a face $F_2'$ in $\{\ell'\leq 0\}$ of $R_w$ such that $\dim F_1'+\dim F_2'\geq \dim R_w=\dim R$. The face $F_1'$ of $R_w$ corresponds to a face $F''_1$ of $R$ of the same dimension and to a face $F_1$ of $C$ of dimension one higher, the latter contained in the positive halfspace with respect to $\ell$. The face $F_2'$ corresponds to a face $F''_2$ of $C$ of dimension one higher where $\ell$ attains negative value on all rays, apart from one passing through $w$. Let $F_2$ be a facet of $F''_2$ not containing $w$. Note that $F_2$ is contained in the negative halfspace with respect to $\ell$. Thus, we have
    \[\dim F_1+\dim F_2=\dim F_1'+1+\dim F_2'+1-1\geq \dim R_w+1=\dim R+1=\dim C.\]

   When $O^-$ is a stratification we will apply Corollary \ref{cor: containing chains=containing faces}. Choose a saturated chain in a maximal dimensional face of $F^-(v)$ connecting the source and $v$. It has length $\dim F^-(v)$. Choose a saturated chain in the maximal dimensional face of $F^+(v)$ connecting $v$ and the sink. It has length $\dim F^+(v)$. Their union is a chain connecting the source to the sink of length $\dim F^-(v)+\dim F^+(v)$. We finish the proof by noting that such a chain can have length at most $\dim P$. 
\end{proof}

\begin{lemma}\label{lem:O=C}
    If $O^-$ is a stratification, then $\OO$ coincides with $\CCC$. 
\end{lemma}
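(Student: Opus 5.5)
The inclusion $\OO\subseteq\CCC$ holds by Lemma~\ref{lem:C=closO}, so the plan is to prove the converse: given $\CCC(v,w)$, I will build a face on which $\ell$ has its minimum at $v$ and its maximum at $w$. The idea is to work inside a face whose sink is $w$ and use a dimension count.

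First, reduce to a face with sink $w$. Since $O^-$ is a stratification, $\BB^-=\CCC$, so $v\in F^-(w)$. Hence there is a face $G$ of $P$ containing $v$ on which $\ell$ attains its maximum at $w$. By Corollary~\ref{cor:stratonface}, $O^-$ induces a stratification on $G$. So all the earlier results apply to $G$ in place of $P$: Lemma~\ref{lem:gradedposet}, Corollary~\ref{cor: containing chains=containing faces}, and the equality case of Lemma~\ref{lem:dimsum=dimP}. The vertex $w$ is the sink of $G$. By Lemma~\ref{lem:F-intersect gives F-}, relative notions in $G$ agree with the restrictions of those in $P$.

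Next, set $a=\dim F^-_G(v)$ and choose a face $Q$ of $G$ of maximal dimension among the faces on which $\ell$ attains its minimum at $v$. Then $\dim Q=\dim F^+_G(v)$, and Lemma~\ref{lem:dimsum=dimP} applied to $G$ gives $\dim Q=\dim G-a$. Let $u$ be the vertex where $\ell$ attains its maximum on $Q$. I then assemble a directed chain in three pieces:
\begin{itemize}
\item a maximal saturated chain from the source of $G$ to $v$, inside a top-dimensional face of $F^-_G(v)$, which has length $a$ by Corollary~\ref{cor: containing chains=containing faces};
\item a maximal saturated chain in $Q$ from $v$ to $u$, which has length $\dim Q=\dim G-a$ by the same corollary applied to $Q$;
\item if $u\neq w$, some directed chain from $u$ to $w$ of length at least $1$, which exists by Lemma~\ref{lem:path}.
\end{itemize}

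The final step, and the main point of the argument, is to bound the length of this concatenated chain. The first claim in the proof of Lemma~\ref{lem:gradedposet}, applied in $G$, says that $\rho_G=\dim F^-_G(\cdot)$ strictly increases along directed edges. It runs from $0$ at the source of $G$ to $\dim G$ at $w$, so every directed chain from the source to $w$ has length at most $\dim G$. The concatenated chain has length $a+(\dim G-a)+1>\dim G$ whenever $u\neq w$, which is impossible. Therefore $u=w$, so $Q$ has minimum at $v$ and maximum at $w$, and $Q$ witnesses $\OO(v,w)$.

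The only delicate points are, first, that the maximal-length chains in $F^\mp_G(v)$ exist with exactly the stated lengths, which is what Corollary~\ref{cor: containing chains=containing faces} supplies. Second, one must choose $Q$ of top dimension, so that the dimension equality of Lemma~\ref{lem:dimsum=dimP} is actually realized by a single face.
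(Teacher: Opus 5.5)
Your proof is correct and is essentially the paper's argument. The paper phrases it as transitivity of $\OO$, using the stratification to put the first witness inside $F^-(v_3)$ (the same move as your appeal to $\BB^-=\CCC$). It then passes to a face with sink $v_3$ and shows, by concatenating length-maximal chains, that a top-dimensional face of $F^+(v_1)$ must reach the sink. Your use of the strict monotonicity of $\rho_G$ to rule out $u\neq w$ simply spells out the paper's remark that a chain of length $\dim P$ must end at the sink.
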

\begin{proof}
    By Lemma \ref{lem:C=closO}, it is enough to prove that $\OO$ is transitive. Assume $\OO(v_1,v_2)$ and $\OO(v_2,v_3)$ and let $F_1$ and $F_2$ be faces of $P$ which witness these relations. As $O^-$ is a stratification, $F_1\subset F^-(v_3)$, and hence there exists a face $F\subset F^-(v_3)$ that contains $F_1$ and $v_3$. Replacing $P$ by $F$, we may assume that $v_3$ is the sink. Let $G$ be a maximal dimensional face in $F^+(v_1)$. 
    We will prove that $v_3\in G$, which finishes the proof.
    In fact, by Lemma \ref{lem:dimsum=dimP}, we have
    \[
    \dim F^-(v_1)+\dim G=\dim F^-(v_1)+\dim F^+(v_1)=\dim P.
    \]
    By Corollary \ref{cor: containing chains=containing faces} 
    connecting length-maximal chains of directed edges in $F^-(v_1)$ and in $G$, we have a chain of directed edges in $F^-(v_1)\cup G$ of length $\dim F^-(v_1)+\dim G=\dim P$. This must be a maximal saturated chain of directed edges in $P$. In particular, the end of the chain must be $v_3$, which shows that $v_3\in G$. 
\end{proof}
\begin{remark}
   Note that the converse of the previous corollary does not hold; see Example \ref{exm:nostrat}.
\end{remark}
As an immediate consequence of Lemmas \ref{lem:easyorderinclusions} and \ref{lem:O=C}, we have the following corollary.
\begin{corollary}\label{cor:gradthen graded}
    If $O^-$ is a stratification, then $\OO=\BB^-=\BB^+=\CCC$, and they define a graded poset with $\rho(v)=\dim F^-(v)$. 
\end{corollary}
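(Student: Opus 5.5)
The corollary follows by combining the inclusions already established. By Lemma~\ref{lem:O=C}, the stratification hypothesis gives $\OO=\CCC$. Lemma~\ref{lem:easyorderinclusions} gives the chains of inclusions
\[
\OO\subseteq \BB^-\subseteq \CCC \quad\text{and}\quad \OO\subseteq \BB^+\subseteq \CCC .
\]
Since the two ends agree, every intermediate relation is squeezed, and so $\OO=\BB^-=\BB^+=\CCC$. In particular, all four relations are the partial order $\CCC$. Here $\CCC$ is reflexive and transitive by construction. It is antisymmetric because $\ell$ strictly increases along directed edges, so a chain from $v$ to $w$ and one from $w$ to $v$ force $v=w$.

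For the grading, I invoke Lemma~\ref{lem:gradedposet}. It states that under the stratification assumption, $\BB^-=\CCC$ is graded with rank function $\rho(v)=\dim F^-(v)$. Since all four relations coincide, the same statement holds for each of them.

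No step here is a genuine obstacle. The only point to check is that Lemma~\ref{lem:O=C} and Lemma~\ref{lem:gradedposet} are each used under exactly the hypothesis that $O^-$ is a stratification, which is the hypothesis of the corollary.
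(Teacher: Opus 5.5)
Your proof is correct and is essentially the paper's own argument. The paper also obtains $\OO=\BB^-=\BB^+=\CCC$ by collapsing the inclusions $\OO\subseteq\BB^\pm\subseteq\CCC$ of Lemma~\ref{lem:easyorderinclusions} using $\OO=\CCC$ from Lemma~\ref{lem:O=C}, with the grading $\rho(v)=\dim F^-(v)$ coming from Lemma~\ref{lem:gradedposet}.
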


\begin{lemma}\label{lem: F^- pure}
    If $O^-$ is a stratification, then all inclusion-maximal faces in $F^-(v)$ are of the same dimension.
\end{lemma}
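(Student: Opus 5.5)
Let $G$ be an inclusion-maximal face in $F^-(v)$, i.e.\ a face on which $\ell$ attains its maximum at $v$ and which is not contained in a larger such face. The plan is to show $\dim G=\dim F^-(v)$ by reducing to a chain-length count, following the pattern of the proof of Lemma~\ref{lem:gradedposet}.

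First, a maximal saturated chain of directed edges in $G$ runs from the source $u$ of $G$ to $v$. By Corollary~\ref{cor: containing chains=containing faces} it has length $\dim G$, and any face containing it contains $G$.

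Next, fix a maximal dimensional face $H$ of $F^+(v)$ and a saturated chain in $H$ from $v$ to the sink of $H$; it has length $\dim F^+(v)$. The two chains concatenate to a chain of length $\dim G+\dim F^+(v)$. Running the same construction with a maximal-dimensional face of $F^-(v)$ in place of $G$ shows the concatenation can reach length $\dim P$, by Lemma~\ref{lem:dimsum=dimP}.

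The main step is to show that $G$ has the same property, i.e.\ that $\dim G+\dim F^+(v)=\dim P$. I would argue by induction on $\dim P$, using Corollary~\ref{cor:stratonface} so that the statement is available on every proper face.

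\begin{itemize}
\item If $G\subsetneq P$ and $v$ is not the sink, the argument in the proof of Lemma~\ref{lem:gradedposet} should make $G$ behave like a maximal face. Suppose $G$ lies in a facet $Q$ whose maximum is at some $w\neq v$. Choose such a facet via the separating-facet argument: take a facet through $G$ avoiding a suitable vertex above $v$.
\item Then $G$ is inclusion-maximal in $F^-_Q(v)=F^-(v)\cap Q$ (Lemma~\ref{lem:F-intersect gives F-}). By induction, $\dim G=\dim F^-_Q(v)$.
\item Combining with Lemma~\ref{lem:dimsum=dimP} applied in $Q$ gives $\dim G=\dim Q-\dim F^+_Q(v)$.
\end{itemize}

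It remains to compare $F^\pm_Q(v)$ with $F^\pm(v)$. I would pick $Q$ to contain a maximal face $H$ of $F^+(v)$ if possible, so that $\dim F^+_Q(v)=\dim F^+(v)$.

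The main obstacle is this choice of $Q$, and the dimension count $\dim Q=\dim P-1$ against $\dim F^-$ dropping by exactly one. What I would try first is a more direct argument. Extend the chain in $G$ by saturated edges upward from $v$ out of $G$, and show that every maximal saturated chain through $v$ has length $\dim P$ (Corollary~\ref{cor: containing chains=containing faces} applied to $P$). Let $k$ be the length of the initial segment from the source to $v$. Using the stratification property, $F^-$ increases along directed edges, and strictly in dimension, as shown in the proof of Lemma~\ref{lem:gradedposet}. Hence the rank $\rho(v)=\dim F^-(v)$ equals the length of any saturated chain from the source of $P$ to $v$, because Corollary~\ref{cor:gradthen graded} says the poset is graded by $\rho$.

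Finally, I would extend the chain in $G$ downward from the source $u$ of $G$ to the source of $P$; this chain is saturated in $P$ with length $\dim G+\rho(u)$. Gradedness then gives $\rho(v)=\rho(u)+\dim G$, so I need $\rho(u)=0$. This holds provided $u$ is the source of $P$, which I would prove from maximality of $G$. If $u$ were not the source, there would be an edge $u'\to u$ from below. The face spanned by $G$ and that edge still has its maximum at $v$, because of the stratification inclusion $F^-(u)\subset F^-(v)$ together with Lemma~\ref{lem:face containing a chain}. This contradicts the maximality of $G$. This last step is the one I expect to need the most care.
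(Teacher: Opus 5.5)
You have a genuine gap in your final argument, and it is exactly where the content of the lemma lies. You take a maximal saturated chain $s$ of $G$. That chain is saturated \emph{relative to $G$}. You then assert that, after prepending a chain from the source, it is saturated in $P$, so that gradedness gives $\rho(v)=\rho(u)+\dim G$. Saturation does not pass from a face up to $P$. In a triangle with directed edges $C\to B\to A$ and $C\to A$, the edge $\{C,A\}$ is a face in $F^-(A)$ whose maximal saturated chain $C\to A$ has length $1$, while $\rho(A)=2$. Without saturation in $P$, strict monotonicity of $\rho$ along directed edges only gives $\rho(v)-\rho(u)\ge\dim G$. Once $u$ is the source, that is $\dim G\le\dim F^-(v)$, which is trivial. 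The reverse inequality is the whole statement, and it requires using the inclusion-maximality of $G$ a second time.

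The missing argument is the paper's. Extend $s$ down to the source and refine it to a chain $s'$ from the source to $v$ that is saturated in $P$ and contains every vertex of $s$. By Lemma~\ref{lem:face containing a chain} there is a face $H$ in $F^-(v)$ containing $s'$. This $H$ contains the vertices, hence the edges, of $s$, so $G\subseteq H$ by the second part of Corollary~\ref{cor: containing chains=containing faces}. Maximality then forces $H=G$. Thus $s'$ lies in $G$ and is a maximal saturated chain there. Its length is therefore both $\dim G$ (first part of that corollary) and $\rho(v)=\dim F^-(v)$ (Corollary~\ref{cor:gradthen graded}).

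This already shows that $G$ contains the source, so your separate step for that becomes redundant. That step is essentially correct, provided you invoke the second part of Corollary~\ref{cor: containing chains=containing faces} to see that the face produced by Lemma~\ref{lem:face containing a chain} for $u'\to u\to\cdots\to v$ contains $G$.

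The facet-induction route you sketch first is not needed. Moreover, a facet $Q\supseteq G$ with $\dim F^+_Q(v)=\dim F^+(v)$, as you hoped to choose, would by your own count give $\dim G=\dim F^-(v)-1$. That contradicts the statement you are proving, so that choice of $Q$ is never available.
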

\begin{proof}
    Choose an inclusion-maximal face $G$ in $F^-(v)$. Pick a maximal saturated chain 
    \[
    {s}=(v_0\to \cdots \to v_k=v)
    \]
    of directed edges in $G$. By Corollary \ref{cor: containing chains=containing faces}, $\dim G=k$ and any face that contains ${s}$ must contain $G$. 

    We can choose a saturated chain ${s}'$ of directed edges from the source to $v$, which also contains all the vertices in ${s}$. By Lemma \ref{lem:face containing a chain}, there exists a face $G'$ in $F^-(v)$ containing ${s}'$. Moreover, without loss of generality, we may assume that $G'$ is inclusion-maximal in $F^-(v)$. By the first part of Corollary~\ref{cor: containing chains=containing faces}, we have $\dim G'$ equal to the length of the saturated chain ${s}'$. By Corollary \ref{cor:gradthen graded} the length of this chain equals $\dim F^-(v)$. Thus:
    \[
    \dim G'=\dim F^-(v).
    \]
    By the second part of Corollary~\ref{cor: containing chains=containing faces}, $G'$ must contain $G$. Since $G$ is inclusion-maximal in $F^-(v)$, it follows that $G'=G$ and $\dim G=\dim F^-(v)$.
\end{proof}
\begin{corollary}\label{cor:max F^- have source}
    If $O^-$ is a stratification, then for any vertex $v$, any inclusion-maximal face $F$ in $F^-(v)$ contains the source.
\end{corollary}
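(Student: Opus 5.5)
The plan is to read this off from the proof of Lemma~\ref{lem: F^- pure}. Fix a vertex $v$ and an inclusion-maximal face $F$ of $F^-(v)$. Let $s_0$ denote the source of $P$. The idea is to show that $F$ coincides with a face built to contain a chain starting at the source.

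First, pick a maximal saturated chain $s=(v_0\to\cdots\to v_k=v)$ of directed edges inside $F$. Its endpoint is $v$, because $\ell$ attains its maximum on $F$ at $v$. By Corollary~\ref{cor: containing chains=containing faces}, we have $k=\dim F$, and every face containing $s$ contains $F$.

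Next, extend $s$ downward to a chain starting at the source. Lemma~\ref{lem:path}, applied to $P$ and read in reverse (every vertex other than the source has an incoming edge), gives a directed chain from $s_0$ to $v_0$. Concatenate it with $s$ and refine to a saturated chain $s'$ from $s_0$ to $v$ that still contains all vertices of $s$. By Lemma~\ref{lem:face containing a chain}, some face of $F^-(v)$ contains $s'$; enlarge it to an inclusion-maximal face $G'$ of $F^-(v)$.

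Since $G'$ contains $s$, it contains $F$. Maximality of $F$ then forces $G'=F$, so $s_0\in F$.

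The only point needing care is the existence of the chain from $s_0$ to $v_0$. Following incoming edges backwards from $v_0$ terminates, since $\ell$ strictly decreases and $P$ has finitely many vertices. It can only stop at a vertex with no incoming edge, and the only such vertex is the source: Lemma~\ref{lem:path} applied to $-\ell$ shows every other vertex has one. Everything else is the argument of Lemma~\ref{lem: F^- pure} verbatim, so I expect no real obstacle.
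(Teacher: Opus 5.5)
Your argument is correct, but its final step differs from the paper's. The paper uses Lemma~\ref{lem: F^- pure} only through its statement, $\dim F=\dim F^-(v)=\rho(v)$. It then takes a chain of directed edges in $F$ of length $\dim F$ ending at $v$. If $F$ missed the source, this chain could be extended downward to give a chain from the source to $v$ of length exceeding $\rho(v)$, contradicting the grading of Lemma~\ref{lem:gradedposet}. You instead rerun the proof of Lemma~\ref{lem: F^- pure} and observe that its identification $G'=G$ already places the source inside the maximal face. So you conclude via the containment part of Corollary~\ref{cor: containing chains=containing faces}, Lemma~\ref{lem:face containing a chain} and inclusion-maximality, without going through the purity statement or the grading. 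This shows the corollary is a byproduct of the purity argument itself, whereas the paper's version is shorter once the lemma is available. Two small simplifications are possible. First, Lemma~\ref{lem:face containing a chain} applies to any chain of directed edges, so refining to a saturated $s'$ is unnecessary. Second, Lemma~\ref{lem:path} applied to $-\ell$ on $P$ directly gives the chain from the source to $v_0$, so the separate backward-walk justification is not needed.
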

\begin{proof}
    By Lemma \ref{lem: F^- pure}, we know that $\dim F=\dim F^-(v)$. By Lemma \ref{lem:gradedposet}, there exists a chain of directed edges inside $F$ of length $\dim F$. If $F$ did not contain the source, we could extend this chain, contradicting Lemma \ref{lem:gradedposet}.
\end{proof}
\begin{lemma}\label{lem: F^+ pure}
     If $O^-$ is a stratification, then all inclusion-maximal faces in $F^+(v)$ are of the same dimension.
\end{lemma}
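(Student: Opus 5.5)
We mirror the proof of Lemma~\ref{lem: F^- pure}, replacing chains ending at $v$ by chains starting at $v$ and the source by the sink. Since we only assume that $O^-$ (not yet $O^+$) is a stratification, we cannot simply apply the earlier lemma to $-\ell$. Instead we use the tools already established under the assumption on $O^-$: Corollary~\ref{cor: containing chains=containing faces}, Corollary~\ref{cor:gradthen graded}, and Lemma~\ref{lem:dimsum=dimP}.

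Fix an inclusion-maximal face $G$ in $F^+(v)$, so that $\ell$ attains its minimum on $G$ at $v$. Choose a maximal saturated chain $s=(v=v_0\to\cdots\to v_k)$ of directed edges in $G$. By Corollary~\ref{cor: containing chains=containing faces} we have $k=\dim G$, $v_k$ is the maximum of $G$, and every face containing $s$ contains $G$. Now extend $s$ to a chain from $v$ to the sink: first append a directed chain from $v_k$ to the sink (by Lemma~\ref{lem:path}), then refine to a saturated chain $s'$ in $P$ containing all vertices of $s$.

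The main obstacle is to find a face of $F^+(v)$ containing $s'$, since Lemma~\ref{lem:face containing a chain} is only stated for $F^-$. We would instead take the face $G'$ of maximal dimension in $F^+(v)$, which has dimension $\dim F^+(v)=\dim P-\dim F^-(v)$ by Lemma~\ref{lem:dimsum=dimP}. We then argue by dimension count, as in the proof of Lemma~\ref{lem:O=C}:
\begin{enumerate}
\item Take a saturated chain from the source to $v$ of length $\dim F^-(v)=\rho(v)$.
\item Concatenate it with $s'$. Since the poset is graded by Corollary~\ref{cor:gradthen graded}, $s'$ has length $\rho(\text{sink})-\rho(v)=\dim P-\dim F^-(v)=\dim G'$.
\item Any maximal saturated chain in $G'$ starting at $v$ also has length $\dim G'$ and ends at the sink, as in the proof of Lemma~\ref{lem:O=C}.
\end{enumerate}
This only produces some chain in $G'$, not necessarily $s'$. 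To pass through $s$, we would instead find a face containing $s$ together with a saturated chain from $v_k$ to the sink. Using the graded structure, $\rho(v_k)-\rho(v)=k$, so by Lemma~\ref{lem:dimsum=dimP}, $\dim F^+(v_k)=\dim P-\rho(v_k)$. We would try to show that the face spanned by $G$ and a top face of $F^+(v_k)$ has minimum $v$. This can likely be done by applying the $O^-$-stratification to faces containing the sink: take a face $H\subset F^-(\text{sink})=P$ containing a chain through the source, $v$, $s$ and the sink, as in Lemma~\ref{lem:face containing a chain}.

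A cleaner route would be to first show that $G$ contains the sink. Suppose instead that the maximum $v_k$ of $G$ is not the sink. Maximality of $G$ means every face strictly containing $G$ has a minimum different from $v$. Take a top face $K$ of $F^-(v)$ and a saturated chain from the source to $v$ in $K$, followed by $s$. This chain has length $\rho(v)+k=\rho(v_k)$, so it is a maximal saturated chain in a top face $L$ of $F^-(v_k)$. By Corollary~\ref{cor: containing chains=containing faces}, $L\supseteq K\cup G$. We would then derive a contradiction with maximality using the facet argument from the proof of Lemma~\ref{lem:gradedposet}: a facet of $P$ containing $G$ but not the sink has its minimum at $v$, and so strictly contains $G$ unless $G$ is that facet. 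Once $G$ is known to contain the sink, $s$ extends within $G$ to a saturated chain from $v$ to the sink. Its length is $\dim P-\rho(v)=\dim F^+(v)$, and Corollary~\ref{cor: containing chains=containing faces} then gives $\dim G=\dim F^+(v)$.
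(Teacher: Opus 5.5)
\textbf{There is a genuine gap: you never prove the nontrivial inequality $\dim G\ge\dim F^+(v)$.}

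Every concatenation you use only yields $\rho(v)+\dim G\le\dim P$, i.e.\ the trivial bound $\dim G\le\dim F^+(v)$. Such a concatenation is a saturated chain from the source to $v$ in a top face of $F^-(v)$, followed by $s$, followed by anything up to the sink. You obtain equality in two places: the claim $\rho(v)+k=\rho(v_k)$, and the final claim that the saturated chain in $G$ from $v$ to the sink has length $\dim P-\rho(v)$. Both silently treat a chain saturated in $G$ as saturated in $P$. Gradedness of $\CCC$ only controls chains saturated in $P$, and saturation in a face does not imply saturation in $P$. For example, in a triangle $a\to b\to c$ the edge $a\to c$ is saturated in itself but not in the triangle. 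In your final step, this assumption is literally the statement $\dim G=\dim F^+(v)$ that you are trying to prove.

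Your first route stalls exactly where you say it does: no $F^+$-analogue of Lemma~\ref{lem:face containing a chain} is available yet. The step meant to show that $G$ contains the sink also fails. Nothing forces a facet $H\supseteq G$ avoiding the sink to have its minimum at $v$; it may contain vertices below $v$, even the source. Maximality of $G$ only gives $\min H\neq v$ when $H\neq G$, which is no contradiction. Note also that ``inclusion-maximal faces of $F^+(v)$ contain the sink'' is Corollary~\ref{cor:max F^+ have sink}, which the paper deduces \emph{from} this lemma.

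\textbf{The missing idea is to use a facet avoiding the \emph{source} and induct on $\dim P$.}

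If $v$ is the source, then $F^+(v)=P$ and there is nothing to prove. Otherwise $G$ misses the source, so some facet $H\supseteq G$ misses it too, and $G$ is still inclusion-maximal in $F^+_H(v)$. By induction (via Corollary~\ref{cor:stratonface}) and Lemma~\ref{lem:dimsum=dimP} applied in $H$, you get $\dim G=\dim F^+_H(v)=\dim P-1-\dim F^-_H(v)$.

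Every inclusion-maximal face of $F^-(v)$ has dimension $\dim F^-(v)$ and contains the source, by Lemma~\ref{lem: F^- pure} and Corollary~\ref{cor:max F^- have source}. Each face of $H$ with maximum at $v$ is therefore a proper subface of one of them, so $\dim F^-_H(v)\le\dim F^-(v)-1$. Hence $\dim G\ge\dim P-\dim F^-(v)=\dim F^+(v)$. This is exactly the lower bound your chain arguments cannot supply.
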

\begin{proof}
    We proceed by the induction on the dimension of $P$. If $v$ is the source, then $F^+(v)=P$ and the statement is trivial. Now, assume the source does not belong to $F^+(v)$. Pick an inclusion-maximal face $G$ of $F^+(v)$. Let $H$ be a facet of $P$ that contains $G$, but does not contain the source. By Corollary \ref{cor:max F^- have source}, we have $\dim F^-_H(v)<\dim F^-_P(v)$. However, $G$ is still an inclusion-maximal face in $F^+_H(v)$. By induction, $\dim F^+_H(v)=\dim G$. Further, by Lemma \ref{lem:dimsum=dimP}, we have:
    \[\dim F^+_H(v)+\dim F^-_H(v)=\dim P-1. \]
    This implies:
    \[\dim G+\dim F^-_P(v)-1\geq \dim F^+_H(v)+\dim F^-_H(v)=\dim P-1. 
    \]
    Together with Lemma \ref{lem:dimsum=dimP}, we obtain:
    \[\dim G\geq \dim P-\dim F^-_P(v)=\dim F^+(v).\]
    Therefore, $\dim G=\dim F^+(v)$. 
\end{proof}
Using the previous lemma, in analogy to Corollary \ref{cor:max F^- have source}, we have the following corollary.
\begin{corollary}\label{cor:max F^+ have sink}
     If $O^-$ is a stratification, then for any vertex $v$ any inclusion-maximal face $F$ in $F^+(v)$ contains the sink.
\end{corollary}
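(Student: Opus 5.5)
We argue exactly as in Corollary~\ref{cor:max F^- have source}, using the dual ingredients. Let $v$ be a vertex and $F$ an inclusion-maximal face in $F^+(v)$. Then $\ell$ attains its minimum on $F$ at $v$. By Lemma~\ref{lem: F^+ pure}, all inclusion-maximal faces of $F^+(v)$ have the same dimension, so $\dim F=\dim F^+(v)$. Combined with the equality case of Lemma~\ref{lem:dimsum=dimP}, this gives
\[
\dim F=\dim F^+(v)=\dim P-\dim F^-(v)=\dim P-\rho(v).
\]

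Next we produce a long chain inside $F$. By Corollary~\ref{cor:stratonface}, $O^-$ induces a stratification on $F$. Hence Corollary~\ref{cor: containing chains=containing faces} applies to $F$: every maximal saturated chain of directed edges in $F$ has length $\dim F$. Such a chain starts at the source of $F$, which is $v$, and ends at the sink of $F$, which we call $w$. Thus there is a chain of directed edges $v=u_0\to\cdots\to u_m=w$ with $m=\dim P-\rho(v)$.

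Now we prepend and append chains and count lengths. By Corollary~\ref{cor:gradthen graded}, the vertex poset is graded with rank $\rho$, where the source has rank $0$ and the sink of $P$ has rank $\dim P$. Also, by the claim in the proof of Lemma~\ref{lem:gradedposet}, $\rho$ strictly increases along directed edges. Along our chain from $v$ to $w$ there are $m$ edges, so $\rho(w)\ge \rho(v)+m=\dim P$. Since $\dim P$ is the maximal rank and is attained only at the sink, $w$ is the sink of $P$. Therefore $F$ contains the sink.

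The only delicate step is justifying that the rank $\dim P$ is attained uniquely at the sink. Every vertex $u$ other than the sink has an outgoing edge, so $\rho(u)<\rho(\text{sink})=\dim F^-(\text{sink})=\dim P$. This settles it, and the argument is complete.
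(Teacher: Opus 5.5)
Your proof is correct and follows essentially the paper's route. Purity (Lemma~\ref{lem: F^+ pure}) together with Lemma~\ref{lem:dimsum=dimP} gives $\dim F=\dim P-\dim F^-(v)$, and a saturated chain in $F$ from $v$ has length $\dim F$. The bound on chain lengths coming from Lemma~\ref{lem:gradedposet} then forces the sink of $F$ to be the sink of $P$. The paper phrases this last step as ``otherwise the chain could be extended'', which is the same count you carry out via the strict increase of $\rho$.
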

\begin{theorem}\label{thm: O- strat iff O+}
    The subdivision $O^-$ is a stratification if and only if $O^+$ is a stratification.
\end{theorem}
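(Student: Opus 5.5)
By the symmetry $\ell\mapsto-\ell$, which swaps $O^-$ with $O^+$, source with sink, and $F^-$ with $F^+$, it suffices to show that if $O^-$ is a stratification then so is $O^+$. So assume $O^-$ is a stratification. Then all the earlier consequences are available:
\begin{itemize}
\item Corollary~\ref{cor:gradthen graded}: $\OO=\BB^+=\CCC$, graded by $\rho(v)=\dim F^-(v)$;
\item Lemma~\ref{lem:dimsum=dimP}: $\dim F^+(v)=\dim P-\rho(v)$;
\item Lemma~\ref{lem: F^+ pure} and Corollary~\ref{cor:max F^+ have sink}: $F^+(v)$ is pure, and its maximal faces contain the sink;
\item Corollary~\ref{cor:stratonface}: $O^-$ restricts to a stratification on every face.
\end{itemize}
I will argue by induction on $\dim P$, so I may assume $O^+_K$ is a stratification for every proper face $K$. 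I also need the analogue of Lemma~\ref{lem:F-intersect gives F-} for $F^+$, namely $F^+_K(v)=F^+_P(v)\cap K$; its proof is verbatim the same.

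First I reduce the stratification condition to a monotonicity statement. Suppose $O^+(u)\cap F^+(v)\neq\emptyset$. Then the relative interior of some face $G$ with $\ell$-minimum at $u$ meets a face $H$ with $\ell$-minimum at $v$. Hence $G\subset H$, so $u\in F^+(v)$, i.e.\ $\BB^+(v,u)$. It therefore suffices to prove that $\CCC(v,u)$ implies $F^+(u)\subset F^+(v)$. Since $\CCC$ is generated by cover relations of the graded poset, and inclusion is transitive, it is enough to treat a directed edge $v\to u$ with $\rho(u)=\rho(v)+1$. For such an edge I must show that every face $G$ of $F^+(u)$ lies in a face on which $\ell$ attains its minimum at $v$. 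I may take $G$ inclusion-maximal, so $\dim G=\dim P-\rho(u)$ and $G$ contains the sink.

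The inductive step goes as follows. If $v$ is the source, then $F^+(v)=P$ and there is nothing to prove. Otherwise I want a proper face $K$ that contains $G\cup\{v\}$ but not the source. Given such a $K$, we have $v\to u$ inside $K$, so induction gives
\[
G\subset F^+_K(u)\subset F^+_K(v)\subset F^+_P(v).
\]

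The main obstacle is producing $K$. My first attempt is to let $J$ be the smallest face containing $G$ and the edge $[v,u]$, and to show that $\ell$ attains its minimum on $J$ at $v$; then I can take $K=J$ directly, with no facet needed. I would argue as follows.
\begin{itemize}
\item Concatenating the saturated edge $v\to u$ with a maximal saturated chain of $G$ from $u$ to the sink yields a chain in $J$ of length $\dim P-\rho(v)$.
\item If the minimum of $J$ were some $w\neq v$, then $\CCC(w,v)$ holds with $\rho(w)<\rho(v)$. Prepending a chain from $w$ to $v$ would give a chain of length exceeding $\dim P-\rho(w)$, provided I can also show $\dim J\le \dim G+1$.
\item This exceeds $\dim F^+(w)\ge \dim J$, which contradicts Corollary~\ref{cor: containing chains=containing faces} applied to a maximal face of $F^+(w)$ containing $J$.
\end{itemize}
The delicate point is bounding $\dim J$, i.e.\ showing that adding $v$ to $G$ raises the dimension by exactly one. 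If this fails, the fallback is to choose a facet through $G$ avoiding the source, in the spirit of the proof of Lemma~\ref{lem: F^+ pure}. One would then use Corollary~\ref{cor:max F^- have source} together with the dimension count of Lemma~\ref{lem:dimsum=dimP} in that facet to force $v$ into it.
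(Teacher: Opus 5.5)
Your reductions are sound: the symmetry $\ell\mapsto-\ell$, the induction on $\dim P$ (legitimate by Corollary~\ref{cor:stratonface}), the reduction to $F^+(u)\subset F^+(v)$ for a saturated edge $v\to u$, and the observation that any proper face $K\supset G\cup\{v\}$ finishes the step. The gap is that you never produce $K$, and neither of your routes can do so as it stands.

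The bound $\dim J\le \dim G+1$ is not a technical side point. Your chain argument shows that it forces $J\subset F^+(v)$. Conversely, $J\subset F^+(v)$ forces the bound, because faces of $F^+(v)$ have dimension at most $\dim P-\rho(v)=\dim G+1$. So the bound is a restatement of the goal $G\subset F^+(v)$, not a lemma toward it. The chain count as written is also off: prepending a saturated chain from $w$ to $v$ gives length exactly $\dim P-\rho(w)=\dim F^+(w)$, never more, so the only available contradiction is with $\dim J$.

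The fallback fails because an arbitrary facet through $G$ avoiding the source need not contain $v$. Take the cube $[0,1]^3$ with $\ell=x+2y+4z$ and the saturated edge $v=(1,0,0)\to u=(1,1,0)$. Then $G=F^+(u)$ is the edge $[(1,1,0),(1,1,1)]$. The facet $\{y=1\}$ contains $G$, avoids the source, and misses $v$. No dimension count can rule this out, since the configuration actually occurs. You would need a rule for choosing the facet, which is the original problem again.

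The missing idea is the paper's truncation trick. It fits directly into your setup and bypasses $J$ altogether.
\begin{enumerate}
\item Let $u=u_0\to\cdots\to u_m$ be a maximal saturated chain in $G$ ending at the sink. If $m=0$ there is nothing to prove.
\item By Lemma~\ref{lem:face containing a chain}, the truncated chain $v\to u_0\to\cdots\to u_{m-1}$ lies in a face $H\subset F^-(u_{m-1})$. This face is proper because $u_{m-1}$ is not the sink.
\item By induction $O^+_H$ is a stratification. The $O^+$-analogue of Lemma~\ref{lem:face containing a chain} then gives a face of $F^+_H(v)\subset F^+(v)$ containing this chain.
\item Take an inclusion-maximal face $H''$ of $F^+(v)$ containing that face. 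By Corollary~\ref{cor:max F^+ have sink}, $H''$ also contains the sink $u_m$, hence the whole chain $u_0\to\cdots\to u_m$.
\item Corollary~\ref{cor: containing chains=containing faces} now gives $G\subset H''\subset F^+(v)$.
\end{enumerate}
The paper runs the same argument for a general pair $p,v$ with $v\in F^+(p)$, using a saturated chain from $p$ to $v$ in place of your single edge.
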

\begin{proof}
 We proceed by induction on the dimension of $P$. Assume $O^-$ is a stratification. 

To prove $O^+$ is a stratification, choose vertices $p$ and $v$ such that $O^+(v)$ intersects $F^+(p)$, and we need to show that $O^+(v)\subset F^+(p)$. Notice that if the relative interior of a face is contained in $O^+(v)$, then this face contains $v$. Thus, $O^+(v)\cap F^+(p)\neq \emptyset$ implies that $v\in F^+(p)$. Let $W$ be any inclusion-maximal face of $F^+(v)$. It suffices to show that $W\subset F^+(p)$.



Pick an inclusion-maximal face $G$ in $F^+(p)$ that contains $v$. Take a saturated in $G$ chain $s$ from $p$ to $v$. Then take a saturated in $W$ chain $s'$ from $v$ to the sink of $W$, which, by Corollary \ref{cor:max F^+ have sink}, is also the sink of $P$. 
If $s'$ consists of a single edge, then so does  $W$. It has to be the edge from $v$ to the sink.
As $v$ and the sink belong to $G$ we have $W\subset G$. Hence, from now on we may assume that $s'$ has length at least two. 

 Let $s''$ be a chain that is the concatenation of $s$ and of $s'$ without the last edge. 
Assume $s''$ is of the form $p=v_0\rightarrow \dots\rightarrow v_n=w$. 
By Lemma~\ref{lem:face containing a chain}, there is a face $H$ in $F^-(w)$ that contains the chain $s''$. Since $w$ is the second last vertex of the chain $s'$, $w$ is not the sink of $P$, and hence $H$ is a proper face of $P$. In particular, applying inductive hypothesis to $H$, we know that $O^+_H$ is a stratification. Thus, by the $O^+$ analogue of Lemma~\ref{lem:face containing a chain}, there exists a face $H'$ in $F^+_H(p)$ containing the chain $s''$. 

Note that $H'\subset F^+_H(p) \subset F^+(p)$. Thus it must be contained in some inclusion-maximal face $H''$ in $F^+(p)$.
Note that $H''$ contains $H'$, and hence the chain $s''$. Moreover, by Corollary \ref{cor:max F^+ have sink}, $H''$ also contains the sink of $P$. Therefore, $H''$ contains every vertex of the chain $s'$, and hence the chain $s'$. 
Since $s'$ is a maximal saturated chain in $W$, by Corollary \ref{cor: containing chains=containing faces}, $H''$ contains $W$. Thus, $W\subset H''\subset F^+(p)$, which concludes the proof. 
\end{proof}

The following lemma is completely elementary and it is demonstrated by the figures below.
\begin{lemma}\label{lem:2-face}
\leavevmode
\begin{enumerate}
    \item For every triangle, $O^-$ is always a stratification. 
    \item For a convex quadrilateral, $O^-$ may or may not be a stratification. 
    \item For any convex $n$-gon with $n\geq 5$, $O^-$ is never a stratification.
\end{enumerate}
\noindent In particular, if a higher-dimensional polytope contains a face that is an $n$-gon with $n\geq 5$, then $O^-$ is not a stratification.
\end{lemma}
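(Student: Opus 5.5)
The plan is to analyze a convex $n$-gon $Q$ with a generic $\ell$ directly. The vertices split into the source $\hat 0$, the sink $\hat 1$, and two monotone boundary paths from $\hat 0$ to $\hat 1$: the left path $\hat 0=a_0\to a_1\to\dots\to a_p=\hat 1$ and the right path $\hat 0=b_0\to\dots\to b_q=\hat 1$, with $p+q=n$. The faces are the vertices, the edges and $Q$ itself. Hence $F^-(\hat 1)=Q$, $F^-(\hat 0)=\{\hat 0\}$, and for every other vertex $v$ the set $F^-(v)$ is the union of the point $v$ and the one or two incoming edges at $v$. A vertex of a path has exactly one incoming edge, namely the previous edge on its path. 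So $F^-(a_i)=[a_{i-1},a_i]$ for $0<i<p$, and similarly for the $b_j$.

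I then test the stratification condition $O^-(u)\cap F^-(v)\neq\emptyset\Rightarrow O^-(u)\subset F^-(v)$ for each vertex $v$.
\begin{itemize}
\item For $v=\hat 1$, $F^-(v)=Q$ and the condition is trivial.
\item For $v=\hat 0$, the condition is trivial as well.
\item For $v=a_i$ with $0<i<p$, $F^-(v)$ meets $O^-(a_{i-1})$ only at the vertex $a_{i-1}$. Here $O^-(a_{i-1})$ is $\{a_{i-1}\}$ together with the open incoming edge $(a_{i-2},a_{i-1})$, or just $\{\hat 0\}$ when $i=1$.
\end{itemize}
So the condition holds exactly when $i=1$, i.e. $a_{i-1}=\hat 0$. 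In other words, $O^-$ is a stratification iff every non-sink vertex other than $\hat 0$ is adjacent to $\hat 0$ along its path. This means $p\le 2$ and $q\le 2$.

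This gives all three parts at once. A triangle has $(p,q)=(1,2)$, which is always fine, proving (1). A quadrilateral is a stratification when $(p,q)=(2,2)$ but not when $(p,q)=(1,3)$, proving (2). I would draw both pictures to go with this. For $n\ge5$ we have $p+q\ge5$, so $\max(p,q)\ge3$ and $O^-$ fails, proving (3).

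For the final sentence I would argue by contraposition using Corollary~\ref{cor:stratonface}. If $O^-$ were a stratification on $P$, it would induce one on every face, in particular on an $n$-gon face with $n\ge5$. By Lemma~\ref{lem:F-intersect gives F-}, the induced partition on that face is exactly $O^-_G$, which contradicts (3).

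The only delicate point is making sure the induced stratification coincides with the intrinsic $O^-_G$. Lemma~\ref{lem:F-intersect gives F-}(1) gives this directly, so I expect no real obstacle.
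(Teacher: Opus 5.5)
Your proposal is correct and takes essentially the paper's approach. The paper only asserts the lemma is elementary and illustrates it with the polygons in Figure~\ref{figure1}; your split of the boundary into two monotone paths of lengths $p,q$, with the criterion $p,q\le 2$, turns those pictures into an actual proof valid for every $n$-gon and every generic $\ell$, and it recovers the paper's later observation that a quadrilateral works iff the source and sink are not joined by an edge.

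Your reduction of the final sentence to Corollary~\ref{cor:stratonface} and Lemma~\ref{lem:F-intersect gives F-} is also what the paper intends. The only unstated point is that $O^-_P(w)\cap G=\emptyset$ for vertices $w\notin G$; this holds because the relative interior of a face containing $w$ cannot meet the face $G$.
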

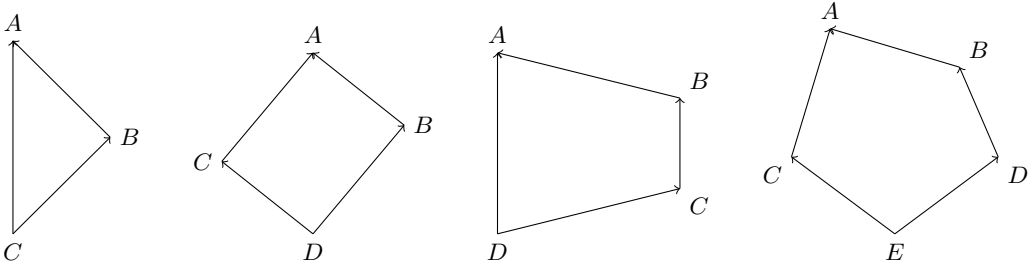
\begin{figure}[h]
\centering
\begin{tikzpicture}[scale=1.6, every node/.style={font=\small}]
    \coordinate (T1) at (0,0);
    \coordinate (T2) at (-0.8,-0.8);
    \coordinate (T3) at (-0.8,0.8);

    \draw[->] (T2) -- (T1);
    \draw[->] (T1) -- (T3);
    \draw[->] (T2) -- (T3);

    \node[above] at (T3) {$A$};
    \node[right] at (T1) {$B$};
    \node[below] at (T2) {$C$};
\end{tikzpicture}
\quad
\centering
\begin{tikzpicture}[scale=1.2, every node/.style={font=\small}]
    \coordinate (A) at (0,-0.2);
    \coordinate (B) at (1,-1);
    \coordinate (C) at (2,0.2);
    \coordinate (D) at (1,1);

    \draw[->] (B) -- (A);
    \draw[->] (B) -- (C);
    \draw[->] (C) -- (D);
    \draw[->] (A) -- (D);

    \node[left] at (A) {$C$};
    \node[below] at (B) {$D$};
    \node[right] at (C) {$B$};
    \node[above] at (D) {$A$};
\end{tikzpicture}
\quad
\centering
\begin{tikzpicture}[scale=1.2, every node/.style={font=\small}]
    \coordinate (A) at (0,1);
    \coordinate (B) at (0,-1);
    \coordinate (C) at (2,-0.5);
    \coordinate (D) at (2,0.5);

    \draw[->] (D) -- (A);
    \draw[->] (C) -- (D);
    \draw[->] (B) -- (C);
    \draw[->] (B) -- (A);

    \node[above] at (A) {$A$};
    \node[below] at (B) {$D$};
    \node[below right] at (C) {$C$};
    \node[above right] at (D) {$B$};
\end{tikzpicture}
\quad
\centering
\begin{tikzpicture}[scale=1.7, every node/.style={font=\small}]
    \coordinate (P1) at (0,0);
    \coordinate (P2) at (0.8,-0.6);
    \coordinate (P3) at (1.6,0);
    \coordinate (P4) at (1.3,0.7);
    \coordinate (P5) at (0.3,1);

    \draw[->] (P2) -- (P1);
    \draw[->] (P2) -- (P3);
    \draw[->] (P3) -- (P4);
    \draw[->] (P4) -- (P5);
    \draw[->] (P1) -- (P5);

    \node[below left] at (P1) {$C$};
    \node[below] at (P2) {$E$};
    \node[below right] at (P3) {$D$};
    \node[above right] at (P4) {$B$};
    \node[above] at (P5) {$A$};
\end{tikzpicture}

\caption{Examples of polygons with directed edges.}
\label{figure1}
\end{figure}

The previous lemma, combined with proposition below give many examples when $O^-$ (and similarly $O^+$)  are stratifications. 

\begin{proposition}\label{prop:construct examples}
\leavevmode
\begin{enumerate}
\item 
For $i=1, 2$, let $P_i\subset \R^{n_i}$ be a polytope and $\ell_i$ a linear function on $\R^{n_i}$ that is nonconstant on every edge of $P_i$. Assume that the induced partition $O_{P_i}^-$ is a stratification for both $i=1, 2$. Then the partition $O^-_{P_1\times P_2}$ of $P_1\times P_2$ induced by $\ell_1+\ell_2$ as a function on $\R^{n_1}\times \R^{n_2}$ is also a stratification. 
\item Let $P\subset \R^n$ be a polytope and $\ell$ is a linear function on $\R^n$ that is nonconstant on every edge of $P$. Let $\Con(P)\subset \R^{n+1}$ denote the pyramid over $P$, and we identify $P$ with the base of the pyramid $\Con(P)$. Let $\ell'$ be a linear function on $\R^{n+1}$ whose restriction to $P$ is equal to $\ell$ and whose unique minimum or maximum on $\Con(P)$ is the apex. Assume that the partition $O^-_P$ of $P$ induced by $\ell$ is a stratification. Then the induced partition $O_{\Con(P)}^-$ of $\Con(P)$ is also a stratification. 
\end{enumerate}
\end{proposition}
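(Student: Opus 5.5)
The plan is to describe the faces and the sets $F^-$ explicitly in both constructions, and then check the stratification condition directly. The condition we verify is: whenever $O^-(v)\cap F^-(w)\neq\emptyset$, we have $O^-(v)\subset F^-(w)$. Since $O^-(v)$ is a union of relative interiors of faces and $F^-(w)$ is a union of closed faces, it suffices to show the following for every face $G$ with $\ell$-maximum at $v$: if the relative interior of $G$ meets $F^-(w)$, then $G\subset F^-(w)$. A relative interior meets a union of faces only if $G$ lies in one of those faces. So the condition becomes purely combinatorial: \emph{if $G\subset G'$ with $\max_{G'}\ell$ at $w$ and $\max_G\ell$ at $v$, then every face $H$ with $\max_H\ell$ at $v$ lies in some face $H'$ with $\max_{H'}\ell$ at $w$.} Equivalently, $v\in F^-(w)$ implies $F^-(v)\subset F^-(w)$.

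For (1), faces of $P_1\times P_2$ are products $G_1\times G_2$, and vertices are pairs $(v_1,v_2)$. Because $\ell_1+\ell_2$ separates variables, $\ell_1+\ell_2$ attains its maximum on $G_1\times G_2$ exactly at $(v_1,v_2)$ if and only if $\ell_i$ attains its maximum on $G_i$ at $v_i$ for each $i$. Genericity on edges holds, since edges are products of an edge with a vertex. Hence $O^-_{P_1\times P_2}(v_1,v_2)=O^-_{P_1}(v_1)\times O^-_{P_2}(v_2)$, and the same holds for closures: $F^-(v_1,v_2)=F^-(v_1)\times F^-(v_2)$. If $(v_1,v_2)\in F^-(w_1,w_2)$, then $v_i\in F^-_{P_i}(w_i)$, so by the hypothesis $F^-(v_i)\subset F^-(w_i)$, and taking products gives the inclusion. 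This step is routine.

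For (2), let $a$ be the apex. Faces of $\Con(P)$ are the faces $G$ of $P$, the pyramids $\Con(G)$ (including $\{a\}$ as $\Con(\emptyset)$), and $\emptyset$. Suppose first that $a$ is the unique minimum. Then $\max$ of $\ell'$ on $\Con(G)$ is the max on $G$ for $G\ne\emptyset$, so for a base vertex $v$ we get $F^-_{\Con P}(v)=\Con(F^-_P(v))$ (union of pyramids over the faces), and $F^-(a)=\{a\}$. The implication is then checked case by case. If $v=a$, then $F^-(a)=\{a\}$ lies in every pyramid. If $v\ne a$, then $v\in\Con(F^-_P(w))$ means $v\in F^-_P(w)$, and the hypothesis on $P$ gives $F^-_P(v)\subset F^-_P(w)$, which we cone. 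Base vertices can never lie in $F^-(a)$.

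Now suppose $a$ is the unique maximum. Then $F^-(a)=\Con(P)$, and $F^-_{\Con P}(v)=F^-_P(v)$ for base vertices, since any pyramid has its max at $a$. The implication is immediate when $w=a$, and it reduces to the hypothesis on $P$ otherwise.

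The only mildly delicate point is confirming that $\ell'$ is nonconstant on the new edges $[a,v]$, which follows from $a$ being the strict unique extremum. I would also double-check the reduction in the first paragraph, namely that "relative interior meets a union of closed faces" forces containment in one of them. This holds since the union of faces is closed, and a face meeting the relative interior of $G$ contains $G$. I expect no real obstacle; the product case requires the most bookkeeping.
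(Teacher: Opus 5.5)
Your proof is correct and follows essentially the same route as the paper: you compute the strata explicitly (for the product, $F^-(v_1,v_2)=F^-(v_1)\times F^-(v_2)$; for the pyramid, $F^-(v)$ is $F^-_P(v)$ or its cone, while $F^-(a)$ is $\Con(P)$ or $\{a\}$) and then verify the frontier condition directly. The only difference is cosmetic: you use the criterion $v\in F^-(w)\Rightarrow F^-(v)\subset F^-(w)$, which you derive correctly in the end, although your preliminary sentence about a single face $G$ being contained in $F^-(w)$ states a condition that is automatic rather than sufficient; the paper instead shows that each closure is a union of strata indexed by $\OO$, which implicitly uses $\OO=\BB^-$ for stratifications.
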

\begin{proof}
   For part (1), notice that the product of two stratifications is always a stratification. Thus, it is enough to show that 
    \[
    O^-_{P_1\times P_2}(v,w) =  O^-_{P_1}(v)\times O^-_{P_2}(w).
    \]
    This equality follows from the observation that for every face $G\times H$ of $P_1\times P_2$,  the function $(\ell_1,\ell_2)$ attains its maximum at vertex $(v,w)$ if and only if $\ell_1$ attains the maximum on $G$ at $v$ and $\ell_2$ attains the maximum on $H$ at $w$. 

    For part (2), denote the apex of the pyramid by $v_0$. If $v_0$ is the maximum of $\ell'$ on $\Con(P)$, then for any vertex $v$ of $P$, we have
    \[
    \overline{O_{\Con(P)}^-(v)}=\overline{O_{P}^-(v)}=\bigcup_{\OO_P(w, v)}O_{P}^-(w)=\bigcup_{\OO_{\Con(P)}(w, v)}O_{\Con(P)}^-(w).
    \]
    Additionally, $\overline{O_{\Con(P)}^-(v_0)}=\Con(P)$.
    If $v_0$ is the minimum of $\ell'$ on $\Con(P)$, then for any vertex $v$ of $P$, we have
    \[
    \overline{O_{\Con(P)}^-(v)}=\Con\left(\overline{O_{P}^-(v)}\right)=\{v_0\}\cup \bigcup_{\OO_P(w, v)}O_{\Con(P)}^-(w)=\bigcup_{\OO_{\Con(P)}(w, v)}O_{\Con(P)}^-(w).
    \]
    Additionally, $\overline{O_{\Con(P)}^-(v_0)}=\{v_0\}$. Therefore, $O_{\Con(P)}^-$ is always a stratification. 
\end{proof}
\begin{remark}
    In part (2) of the above proposition, the assumption $v_0$ being the source or sink is necessary. For example, in the first quadrilateral of Figure \ref{figure1}, if the apex satisfies $\ell'(B)>\ell'(v_0)>\ell'(C)$, then $O_{\Con(P)}^-$ is no longer a stratification. In fact, $O_{\Con(P)}^-(v_0)$ intersects the closure of $O_{\Con(P)}^-(B)$, but it is not contained in the closure. 
\end{remark}

The following theorem gives a classification of simple polytopes whose 2-faces are either triangles or quadrilaterals. 

\begin{proposition}[{\cite[Proposition 4.5]{Wiemeler}, see also \cite[Theorem 2.1]{YuMasuda}}]\label{prop:2faces=>simple}
    For a simple convex polytope $P$, all 2-faces of $P$ are either triangles or quadrilaterals if and only if $P$ is combinatorially equivalent to a product of simplices. 
\end{proposition}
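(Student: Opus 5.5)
The easy direction is a direct check, and for the hard direction I would reconstruct the graph of $P$ and then invoke the Blind--Mani--Kalai theorem that a simple polytope is determined combinatorially by its graph. The step I expect to be the main obstacle is the globalization in the third paragraph.

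\emph{Easy direction.} Let $P=\Delta_{n_1}\times\cdots\times\Delta_{n_k}$. Then $P$ is simple, since each vertex is a product of vertices and has $\sum n_i=\dim P$ edges. Its faces are products of faces of the factors. A $2$-face is therefore either a $2$-simplex times points (a triangle) or a product of two edges (a quadrilateral).

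\emph{Local step at a vertex.} Let $P$ be simple with only triangular and quadrilateral $2$-faces, and let $d=\dim P$. In a simple polytope, any set of $j$ edges at a vertex $v$ spans a unique $j$-face. I would first classify the $3$-faces. A simple $3$-polytope with only triangles and quadrilaterals has $2E=3V$, and Euler's formula then forces $f_3+2f_4\cdot\frac{1}{2}\cdots$, namely $3f_3+2f_4=12$ when counting $\sum(6-\deg)$ over faces. The only realizations are the tetrahedron ($f_3=4$), the triangular prism ($f_3=2,f_4=3$) and the cube ($f_4=6$), i.e.\ the $3$-dimensional products of simplices. Next, on the $d$ edges at $v$ I declare $e\sim f$ if the $2$-face spanned by $e,f$ is a triangle. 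Transitivity would follow from the classification: three edges at $v$ span one of these three $3$-faces, and in each of them the "triangle" relation on the three edges at a vertex is an equivalence relation. The classes give a partition $d=n_1+\dots+n_k$. This partition is the same at every vertex. To see this, move along an edge $e$ in class $i$. Quadrilaterals transport the other classes unchanged. Inside the simplex face spanned by class $i$ (all of whose $2$-faces through $v$ are triangles, so by induction on dimension it is a simplex), the class is transported to a class of the same size.

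\emph{Globalization.} Using the local data, I would show that the graph $G(P)$ is isomorphic to $K_{n_1+1}\times\cdots\times K_{n_k+1}$. Starting from $v$, label each vertex by a $k$-tuple as follows. The class-$i$ edges at $v$ span an $n_i$-simplex face $S_i$. The faces spanned by unions of classes are, by the $3$-face classification applied inductively together with the face-lattice structure of simple polytopes, products $S_{i_1}\times\cdots$. Concretely, I would prove by induction on the number of classes that the face spanned by classes $1,\dots,j$ at $v$ is combinatorially $\Delta_{n_1}\times\cdots\times\Delta_{n_j}$. For the induction step, take the face $Q$ spanned by classes $1,\dots,j-1$ and consider the face $R$ obtained by adding class $j$. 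Every vertex of $Q$ has exactly $n_j$ further edges in $R$, each spanning only quadrilaterals with the edges of $Q$. This makes $R$ a "prism-like" stack of copies of $Q$ indexed by the vertices of $\Delta_{n_j}$, and gives an isomorphism of graphs $G(R)\cong G(Q)\times K_{n_j+1}$. The main obstacle is this step: one must show that the copies of $Q$ close up consistently, with no monodromy, and exhaust $R$. I would handle it using connectedness of the graph of $R$ together with the fact that quadrilaterals and triangles force the edge-labelling to be a local product isomorphism, hence a covering of graphs. A covering that is bijective at a vertex star and restricts to faces then has to be an isomorphism, by a counting argument on the $2$-faces.

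\emph{Conclusion.} Taking $j=k$ yields $G(P)\cong G(\Delta_{n_1}\times\cdots\times\Delta_{n_k})$. Both polytopes are simple, so the Blind--Mani/Kalai theorem gives that they are combinatorially equivalent.
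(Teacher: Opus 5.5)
The paper does not prove this statement; it quotes it from Wiemeler and Yu--Masuda, so your argument has to stand on its own.

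Your easy direction and your local analysis are sound. The relation $3f_3+2f_4=12$ leaves only the tetrahedron, the triangular prism and the cube. Hence the triangle relation is an equivalence relation at each vertex, and the transport of the class partition along an edge works as you describe.

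One small repair is needed. The face $S_i$ is a simplex for an elementary reason, and the induction you invoke would not cover the case $k=1$, $S_1=P$. If all $2$-faces of $S_i$ at $v$ are triangles, the $n_i$ neighbours of $v$ in $S_i$ are pairwise adjacent. So these $n_i+1$ vertices already have full degree $n_i$ in $G(S_i)$, and connectivity gives $f_0(S_i)=n_i+1$.

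The genuine gap is the globalization, and the remedy you propose does not close it.

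\emph{First issue: the map is not yet defined.} You label the vertices of the simplices $S(q)$, spanned by the class-$j$ edges at $q\in Q$, by transport along edges of $Q$, and this labelling is a priori path-dependent. Your $3$-face classification does show the transport is trivial around every $2$-face of $Q$, since a $2$-face together with a class-$j$ edge spans a prism or a cube. To get triviality around an arbitrary closed edge-walk, you need every closed walk to be, up to backtracking, a product of conjugates of $2$-face boundaries. In other words, you need the $2$-skeleton of a polytope to be simply connected.

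\emph{Second issue: counting cannot give degree one.} Once $\Phi\colon (q,a)\mapsto\sigma_q(a)$ is defined, it is a graph covering compatible with $2$-faces, and it is surjective by your connectivity argument. But counting $2$-faces cannot by itself show that it has degree one, because a degree-$m$ covering compatible with faces multiplies every face number by $m$. Genuine quotients also reproduce all your local data. For example, the boundary of the $4$-cube double covers the hemi-$4$-cube $\partial[-1,1]^4/\{\pm 1\}$, with identical vertex stars and compatibly with all faces, and even Euler's relation there reads $0=0$.

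\emph{The missing global input.} For $\dim R\ge 3$ we have $\partial R\cong S^{\dim R-1}$, so the $2$-skeleton of $\partial R$ is simply connected; for $\dim R=2$, use the disk $R$ itself. Apply this as follows:
\begin{enumerate}
\item Take two points in a fibre of $\Phi$ and join them by a path in $G(Q)\times K_{n_j+1}$; its image is a closed walk in $G(R)$.
\item That walk reduces to the trivial walk by inserting and removing backtracks and $2$-face boundaries, and each of these lifts to a closed walk.
\item By unique path lifting, the lift of the image walk starting at the first point ends at the first point, so the two points coincide.
\end{enumerate}
The same fact, applied to $Q$, gives the path-independence needed in the first issue. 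With simple connectivity used in both places, your induction and the final appeal to Blind--Mani--Kalai go through.
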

Thus, by Lemma \ref{lem:2-face}, we immediately have the following corollary. 
\begin{corollary}\label{cor:prod_sim}
    If $P$ is a simple polytope satisfying Assumption \ref{ass:equivalent}, then $P$ is combinatorially equivalent to a product of simplices. 
\end{corollary}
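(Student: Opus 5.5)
The plan is to combine Proposition~\ref{prop:2faces=>simple} with Lemma~\ref{lem:2-face}. Assumption~\ref{ass:equivalent} contains in particular the requirement that $O^-$ is a stratification; this is either assumed directly, or it follows from one of the equivalent conditions of Theorem~\ref{thm:equivalences}. So I will work under the hypothesis that $O^-_P$ is a stratification and that $P$ is simple. By Proposition~\ref{prop:2faces=>simple} it then suffices to show that every $2$-face of $P$ is a triangle or a quadrilateral.

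To see this, I take an arbitrary $2$-face $G$ of $P$. By Lemma~\ref{lem:F-intersect gives F-}, the hypothesis of Lemma~\ref{lem:induced stratification} holds for the closed subset $G$. By Corollary~\ref{cor:stratonface}, $O^-_P$ therefore induces a stratification on $G$. By part~(1) of Lemma~\ref{lem:F-intersect gives F-}, this induced stratification is exactly $O^-_G$, the partition of $G$ defined intrinsically from the restriction of $\ell$ to $G$. The restriction is still nonconstant on the edges of $G$, so Lemma~\ref{lem:2-face} applies: since $O^-_G$ is a stratification, $G$ cannot be an $n$-gon with $n\geq 5$. Hence $G$ is a triangle or a quadrilateral.

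Applying Proposition~\ref{prop:2faces=>simple} to the simple polytope $P$ then shows that $P$ is combinatorially equivalent to a product of simplices. The only point requiring care is making sure that the stratification property genuinely passes to the $2$-faces with the intrinsic partition $O^-_G$, which is exactly what Lemma~\ref{lem:F-intersect gives F-} provides. Beyond that, the argument rests on the elementary case~(3) of Lemma~\ref{lem:2-face}, whose intended reasoning I would check as follows. In an $n$-gon with $n\geq 5$, one of the two monotone paths from source to sink has at least three edges. Along that path there is a vertex $u$ that is neither adjacent to the source nor to the sink, and whose incoming edge $e$ does not start at the source. Then $u\in F^-(w)$ for the next vertex $w$ on the path. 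However, the relative interior of $e$ lies in $O^-(u)$ and is not contained in $F^-(w)$, because $F^-(w)$ consists only of the vertices and edges ending at $w$ (the whole polygon does not belong to $F^-(w)$, since $w$ is not the sink). This violates the stratification property.
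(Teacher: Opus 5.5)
Your proof is correct and follows the paper's route: the corollary comes from combining Lemma~\ref{lem:2-face}, specifically its final clause, with Proposition~\ref{prop:2faces=>simple}. You re-derive that final clause via Corollary~\ref{cor:stratonface} and the identification of the induced partition with $O^-_G$.

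One correction to your optional check of Lemma~\ref{lem:2-face}(3): a vertex $u$ with the properties you list need not exist. Those properties force the longer monotone path to have at least four edges. This fails for a pentagon whose two monotone paths have $3$ and $2$ edges. An example is the pentagon of Figure~\ref{figure1}, with paths $E\to D\to B\to A$ and $E\to C\to A$, where every vertex other than $E$ and $A$ is adjacent to one of them. It also fails for a hexagon whose two paths both have $3$ edges.

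The extra conditions are superfluous, so the fix is easy. Let $p_0\to p_1\to p_2\to\cdots\to p_m$ be a monotone path with $m\geq 3$, and take $u=p_1$ and $w=p_2$. Since $w$ is not the sink, $F^-(w)$ is just the edge $p_1\to p_2$, so $u\in F^-(w)$. On the other hand, the relative interior of the edge $p_0\to p_1$ lies in $O^-(p_1)$ but not in $F^-(p_2)$, which violates the stratification property.
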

\begin{remark}\label{rem:product of chains}
    In fact, when $P$ is a simple polytope satisfying Assumption \ref{ass:equivalent}, the poset $\CCC_P$ is isomorphic to a product of chains. As this fact is not needed in the remainder of the paper, we omit the proof.
\end{remark}

\subsection{Irreducible Bia{\l}ynicki-Birula strata}
In order to avoid the pathological behavior in Example~\ref{exm:nostrat}, we recall the  irreducibility assumption from the introduction.

\begin{assumption}\label{ass:irred}
    For every vertex $v$ of $P$, both $F^-(v)$ and $F^+(v)$ are irreducible, that is, each of them form a face of $P$.
\end{assumption}
\begin{lemma}
If Assumption \ref{ass:irred} holds for $P$, then it also holds for any face $G$ of $P$.
\end{lemma}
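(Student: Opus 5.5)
The plan is to reduce everything to Lemma \ref{lem:F-intersect gives F-}(2), which says that for any face $G$ of $P$ and any vertex $w$ of $G$ we have $F^-_G(w)=F^-_P(w)\cap G$. Together with its $F^+$ analogue, obtained by replacing $\ell$ with $-\ell$ (which swaps $O^-$ and $O^+$), this expresses the strata closures of $G$ in terms of those of $P$.

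Fix a face $G$ of $P$ and a vertex $w$ of $G$. By Assumption \ref{ass:irred} for $P$, the set $F^-_P(w)$ is a face $Q$ of $P$. Then
\[
F^-_G(w)=Q\cap G.
\]
The intersection of two faces of a polytope is again a face, since it is cut out by the sum of their supporting functionals. Here $Q\cap G$ is nonempty because it contains $w$. So $F^-_G(w)$ is a face of $P$ contained in $G$, hence a face of $G$.

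The same argument applied to $-\ell$ gives $F^+_G(w)=F^+_P(w)\cap G$, which is likewise a face of $G$. Thus both $F^-_G(w)$ and $F^+_G(w)$ are irreducible for every vertex $w$ of $G$, which is exactly Assumption \ref{ass:irred} for $G$.

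There is no real obstacle here. The only point to check is that the sign-reversal symmetry legitimately gives the $F^+$ version of Lemma \ref{lem:F-intersect gives F-}. It does: the proof of that lemma only used that $\ell$ attains its maximum on $Q\cap G$ at $w$, and this holds verbatim with "maximum" replaced by "minimum".
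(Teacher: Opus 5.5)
Your proposal is correct and is essentially the paper's proof: both invoke Lemma~\ref{lem:F-intersect gives F-}(2) to get $F^-_G(w)=F^-_P(w)\cap G$ and then use that an intersection of faces is a face. You additionally spell out the $F^+$ case via the symmetry $\ell\mapsto-\ell$, which the paper leaves implicit.
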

\begin{proof}
    By statement (2) of Lemma \ref{lem:F-intersect gives F-}, we have $F^-_G(w)=F^-_P(w)\cap G$. As intersection of two faces is a face, we see that $F^-_G(w)$ is indeed a face of $G$.
\end{proof}

\begin{lemma}
If $P$ is a simple polytope, then for any vertex $v$ of $P$ we have $F^-(v)$ is the face of $P$ spanned by edges incoming to $v$ and $F^+(v)$ is the face spanned by the edges outgoing from $v$. In particular, Assumption~\ref{ass:irred} always holds. 
\end{lemma}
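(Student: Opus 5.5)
The plan uses only the local structure of a simple polytope at a vertex. Let $P$ be simple of dimension $d$ and $v$ a vertex, so exactly $d$ edges leave $v$ with linearly independent directions $e_1,\dots,e_d$. Every subset $S$ of these edges spans a unique face $G_S$ of $P$ with $\dim G_S=|S|$. Conversely, every face containing $v$ arises this way: its edges at $v$ form such a subset, and it is spanned by them. Split the edges at $v$ into incoming edges $I$ (those with $\ell$ decreasing from $v$) and outgoing edges $J$, and set $G_I$ and $G_J$ to be the corresponding faces. The goal is $F^-(v)=G_I$ and $F^+(v)=G_J$. We only treat $F^-$; the case of $F^+$ follows by replacing $\ell$ with $-\ell$.

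\emph{Step 1: $\ell$ attains its maximum on $G_I$ at $v$.} Near $v$, the face $G_I$ coincides with the cone $v+\mathrm{cone}(e_i : i\in I)$. Each $e_i$ with $i\in I$ satisfies $\ell(e_i)<0$, so $v$ is a strict local maximum of $\ell$ on $G_I$. Since $\ell$ is linear and $G_I$ is convex, a local maximum is global. Hence $G_I\subset F^-(v)$.

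\emph{Step 2: every face $G$ on which $\ell$ attains its maximum at $v$ is contained in $G_I$.} Such a $G$ contains $v$, so $G=G_S$ for the set $S$ of edges of $G$ at $v$. If $S$ contained an outgoing edge, $\ell$ would increase along that edge inside $G$, contradicting maximality at $v$. Thus $S\subseteq I$ and $G=G_S\subseteq G_I$. Since $F^-(v)$ is the union of these faces, $F^-(v)\subseteq G_I$.

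Steps 1 and 2 together give $F^-(v)=G_I$, which is a face, so Assumption~\ref{ass:irred} holds. The only point needing care is the standard simple-polytope fact in the first paragraph, that subsets of edges at $v$ correspond bijectively to faces containing $v$. It holds because the vertex figure of $P$ at $v$ is a simplex, and I would cite it rather than reprove it.
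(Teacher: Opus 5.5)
Your proposal is correct and follows the paper's proof: both arguments prove $G_I\subseteq F^-(v)$ and $F^-(v)\subseteq G_I$ using the simple-polytope fact that the faces through $v$ correspond to subsets of the edges at $v$. Your Step 1 also spells out, via the tangent cone, why $v$ is the sink of $G_I$, which the paper simply asserts.
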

\begin{proof}
    By symmetry we only prove the statement for $F^-(v)$. Let $G$ be the face spanned by all edges incoming to $v$. Then $v$ is a sink for $G$ and thus $G\subseteq F^-(v)$. 

    Let $F$ be a face in $F^-(v)$. Then $v$ is the sink on $F$, i.e.,~edges of $F$ adjacent to $v$ are all incoming edges. As every face is the face spanned by edges adjacent to any of its vertices,
    $F$ is spanned by some subset of edges incoming to $v$. In particular, $F\subseteq G$ and thus $F^-(v)\subseteq G$.
    \end{proof}

\begin{definition}
Given a face $G$, a directed edge $v_1\to v_2$ is called an \emph{incoming edge} (resp.~\emph{outgoing edge}) of $G$, if $v_2\in G$ but $v_1\notin G$ (resp. if $v_1\in G$ but $v_2\notin G$). 
\end{definition}

The main result of this subsection is the following theorem.
\begin{theorem}\label{thm:equivalences}
    Assume that $\ell$ is a linear function on a polytope $P\subset \RR^n$ that is not constant on edges. If Assumption \ref{ass:irred} holds, then the following conditions are equivalent:
    \begin{enumerate}
        \item $O^-$ is a stratification,
        \item $O^+$ is a stratification,
        \item for every vertex $v$ the face $F^-(v)$ has no incoming edges,
        \item for every vertex $v$ the face $F^+(v)$ has no outgoing edges,
        \item $\OO=\CCC$,
        \item $\OO$ is a poset,
        \item $\OO$ is a graded poset, with grading given by $\rho(v)=\dim F^-(v)$,
        \item for any vertices $v,w$, we have 
        \[
        \overline{O^+(v)\cap O^-(w)}=F^+(v)\cap F^-(w).
        \]
    \end{enumerate}
\end{theorem}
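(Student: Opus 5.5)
The plan is to use (1)$\Leftrightarrow$(2), which is Theorem~\ref{thm: O- strat iff O+}, as the hub and link the remaining conditions to it. Under Assumption~\ref{ass:irred}, $F^-(v)$ is a single face on which $\ell$ attains its maximum at $v$. Every face $G\subset F^-(v)$ is therefore a face of $F^-(v)$. Moreover, $F^-(v)$ is the \emph{largest} face whose $\ell$-maximum is $v$.

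\textbf{(1)$\Leftrightarrow$(3).} Suppose $O^-$ is a stratification and $u\to w$ is an incoming edge of $F^-(v)$, so $w\in F^-(v)$ and $u\notin F^-(v)$. Then $O^-(w)$ meets $F^-(v)$, hence $O^-(w)\subset F^-(v)$. The relative interior of the edge $u\to w$ lies in $O^-(w)$, so $u\in F^-(v)$, a contradiction.

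Conversely, assume (3). First show that $w\in F^-(v)$ implies $F^-(w)\subset F^-(v)$. Take $G=F^-(w)$; this is a face containing $w$ in which every edge at $w$ is incoming to $w$. Since $F^-(v)$ has no incoming edges and $w\in F^-(v)$, every edge of $G$ at $w$ lies in $F^-(v)$. A face is spanned by its edges at any one of its vertices, so $G\subset F^-(v)$. From this, (1) follows: if $O^-(w)\cap F^-(v)\neq\emptyset$ then $w\in F^-(v)$, because a face whose relative interior meets the face $F^-(v)$ lies in it. Hence $O^-(w)\subset F^-(w)\subset F^-(v)$. The equivalence (2)$\Leftrightarrow$(4) is symmetric.

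\textbf{(1)$\Rightarrow$(7)$\Rightarrow$(6).} Corollary~\ref{cor:gradthen graded} gives (1)$\Rightarrow$(7), and (7)$\Rightarrow$(6) is trivial. (6)$\Leftrightarrow$(5) holds because, by Lemma~\ref{lem:C=closO}, $\CCC$ is the transitive closure of $\OO$, and $\OO$ is reflexive (the vertex itself is a witness).

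\textbf{(5)$\Rightarrow$(3), the key step.} Suppose $u\to w$ with $w\in F^-(v)$ and $u\notin F^-(v)$. Then $\CCC(u,v)$ holds, because $\CCC(w,v)$ by Lemma~\ref{lem:easyorderinclusions}. So $\OO(u,v)$, and a witness $G$ has minimum $u$ and maximum $v$. By irreducibility, $G\subset F^-(v)$, so $u\in F^-(v)$, a contradiction. This shows irreducibility is exactly what makes $\OO$ transitive-enough to close the cycle of implications.

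\textbf{(8).} The inclusion $\subseteq$ is always true. Assuming (1), I expect the hardest step to be showing $F^+(v)\cap F^-(w)\subseteq\overline{O^+(v)\cap O^-(w)}$. Under irreducibility, $H=F^+(v)\cap F^-(w)$ is a face on which $\ell$ has minimum $v$ and maximum $w$ (if it is nonempty, which requires $v\le w$). Its relative interior lies in $O^+(v)\cap O^-(w)$, so its closure $H$ lies in the left side; this direction may in fact hold without (1).

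For (8)$\Rightarrow$(5), take $\CCC(v,w)$ and aim for $\OO(v,w)$. I would proceed by induction along a chain $v\to\cdots\to w$, showing $F^+(v)\cap F^-(w)\neq\emptyset$ using (8) and the face structure. This is the part I am least sure of. If it resists, I would instead prove (8)$\Rightarrow$(3) directly: an incoming edge $u\to x$ of $F^-(v)$ gives $x\in F^+(u)\cap F^-(v)$, a nonempty intersection. By (8) this intersection is then a witness face containing $u$ with maximum $v$, so $u\in F^-(v)$, a contradiction.
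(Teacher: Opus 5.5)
Most of your argument is correct and close to the paper's:
\begin{itemize}
\item (1)$\Leftrightarrow$(3) and (2)$\Leftrightarrow$(4) are Lemma~\ref{lem:incomingEdges}. Your edges-at-$w$ argument for (3)$\Rightarrow$(1) is a valid variant of the paper's chain argument.
\item (1)$\Rightarrow$(7)$\Rightarrow$(6)$\Leftrightarrow$(5) is as in the paper.
\item Your (5)$\Rightarrow$(3) is the paper's (6)$\Rightarrow$(1).
\item Your fallback (8)$\Rightarrow$(3) is exactly the paper's argument, so the vague induction plan for (8)$\Rightarrow$(5) is not needed.
\end{itemize}
A minor correction: in (5)$\Rightarrow$(3), the inclusion $G\subset F^-(v)$ holds by the definition of $F^-(v)$, not by irreducibility. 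That implication holds without Assumption~\ref{ass:irred} (cf.\ Example~\ref{exm:nostrat}); irreducibility is genuinely used in (3)$\Rightarrow$(1).

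The genuine gap is in (1)$\Rightarrow$(8). You assert that, under irreducibility alone, a nonempty $H=F^+(v)\cap F^-(w)$ is a face on which $\ell$ has minimum $v$ and maximum $w$, and that this direction may hold without (1). This is false. Take the second quadrilateral of Figure~\ref{figure1}, with edges $D\to C\to B\to A$ and $D\to A$; irreducibility holds since polygons are simple.
\begin{itemize}
\item $F^+(D)=P$ and $F^-(B)$ is the edge $CB$, so $H=CB\neq\emptyset$, and $\CCC(D,B)$ holds.
\item No face has minimum $D$ and maximum $B$, so $O^+(D)\cap O^-(B)=\emptyset$.
\end{itemize}
This had to happen: the inclusion $\subseteq$ always holds, and the theorem gives (8)$\Rightarrow$(1), so $\supseteq$ must fail whenever (1) fails.

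The missing step is exactly where (1) enters. By Corollary~\ref{cor:gradthen graded}, (1) gives $\BB^+=\BB^-=\CCC$. If $u$ is a vertex of $H$, then $\BB^+(v,u)$ and $\BB^-(u,w)$ give $\CCC(v,w)$ via Lemma~\ref{lem:easyorderinclusions}. Hence $\BB^-(v,w)$ and $\BB^+(v,w)$ hold, i.e.\ $v\in F^-(w)$ and $w\in F^+(v)$. Only then do you get $v,w\in H$, so that $\ell$ attains its minimum on $H$ at $v$ and its maximum at $w$. The relative interior of $H$ then lies in $O^+(v)\cap O^-(w)$, which gives $\supseteq$.
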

We start with a lemma that proves equivalences $(1)\Leftrightarrow (3)$ and $(2)\Leftrightarrow (4)$ in Theorem \ref{thm:equivalences}. 
\begin{lemma}\label{lem:incomingEdges}
    Under Assumption \ref{ass:irred}, the partition $O^-$ is a stratification if and only if, for any vertex $v$, the face $F^-(v)$ has no incoming edges.  Analogously, $O^+$ is a stratification if and only if, for every vertex $v$, the face $F^+(v)$ has no outgoing edges. 
\end{lemma}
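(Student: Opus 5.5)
We prove the statement for $O^-$; the statement for $O^+$ follows by replacing $\ell$ with $-\ell$, which swaps $O^-$ with $O^+$ and incoming with outgoing edges. Under Assumption~\ref{ass:irred}, write $F^-(v)$ for the face it is. The starting observation is that for a vertex $u$, $O^-(u)\cap F^-(v)\neq\emptyset$ if and only if $u\in F^-(v)$. Indeed, if the relative interior of a face $G$ with sink $u$ meets the face $F^-(v)$, then $G\subset F^-(v)$ and so $u\in F^-(v)$. Conversely, $u\in O^-(u)$. Hence $O^-$ is a stratification if and only if
\[
u\in F^-(v)\ \Longrightarrow\ F^-(u)\subset F^-(v)\qquad\text{for all vertices } u,v.
\]

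\emph{Stratification implies no incoming edges.} Suppose $u\to w$ is an edge with $w\in F^-(v)$ and $u\notin F^-(v)$. The edge $[u,w]$ has sink $w$, so it lies in $F^-(w)$. Stratification gives $F^-(w)\subset F^-(v)$, hence $u\in F^-(v)$, a contradiction.

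\emph{No incoming edges implies stratification.} Let $u\in F^-(v)$; we must show $F^-(u)\subset F^-(v)$. Since $F^-(u)$ is a face with sink $u$, every vertex $x$ of $F^-(u)$ is joined to $u$ by a directed chain $x=x_0\to\cdots\to x_k=u$ inside $F^-(u)$, by Lemma~\ref{lem:path}. Apply decreasing induction along this chain. We have $x_k=u\in F^-(v)$, and if $x_{i}\in F^-(v)$, then $x_{i-1}\to x_i$ is an edge ending in $F^-(v)$. Since $F^-(v)$ has no incoming edges, $x_{i-1}\in F^-(v)$. So every vertex of $F^-(u)$ lies in the face $F^-(v)$, and by convexity $F^-(u)\subset F^-(v)$.

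The only step needing care is the reduction in the first paragraph, namely that meeting the closure forces $u\in F^-(v)$; this uses irreducibility, since $F^-(v)$ is a single face and a face containing an interior point of $G$ contains all of $G$. Irreducibility is used again in the converse direction, where convexity of $F^-(v)$ lets us pass from vertices to the whole face.
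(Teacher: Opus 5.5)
Your proof is correct and follows the paper's argument essentially step for step. The forward direction uses that the edge $u\to w$ lies in $F^-(w)\subset F^-(v)$, and the converse runs a decreasing induction along a directed chain from Lemma~\ref{lem:path} and then uses irreducibility to pass from vertices to the whole face. One small quibble: the reduction in your first paragraph does not need irreducibility, since a relative-interior point of $G$ lying in any face with sink $v$ already forces $G$ into that face. So Assumption~\ref{ass:irred} is genuinely used only in the convexity step of the converse, as Example~\ref{exm:nostrat} confirms.
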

\begin{proof}
We only prove the first statement, as the second is similar.
    Assume $O^-$ is a stratification.  Consider a directed edge $v_1\to v_2$ where $v_2\in F^-(v)$. We want to prove that $v_1\in F^-(v)$. The intersection $O^-(v_2)\cap F^-(v)$ contains $v_2$ and hence is nonempty. Thus $O^-(v_2)\subseteq F^-(v)$ by assumption on the stratification. Since $v_1$ is in the closure of $O^-(v_2)$ and $F^-(v)$ is closed, we conclude that $v_1\in F^-(v)$.

    Conversely, assume that for any vertex $v$, the face $F^-(v)$ has no incoming edges. Suppose that the intersection $O^-(v_1)$ and $F^-(v_2)$ contains the relative interior of a face $F_3$. Note that $F_3$ is a face of $F^-(v_2)$. Further, $\ell$ attains its maximum on $F_3$ at $v_1$ and in particular $F_3$ contains $v_1$. Thus $v_1\in F^-(v_2)$.
    
    Finally we prove that each vertex $w$ of $F^-(v_1)$ is contained in $F^-(v_2)$. Indeed, by Lemma \ref{lem:path} we have a sequence of directed edges connecting $w$ and $v_1\in F^-(v_2)$. Inductively, each such edge must be contained in $F^-(v_2)$ and thus so must $w$. As each vertex of $F^-(v_1)$ is contained in $F^-(v_2)$, by Assumption \ref{ass:irred}, we have $F^-(v_1)\subseteq F^-(v_2)$.  
\end{proof}

We are now ready to finish the proof of the main theorem of this subsection. 

\begin{proof}[Proof of Theorem \ref{thm:equivalences}]
    The equivalences $(1)\Leftrightarrow (3)$ and $(2)\Leftrightarrow (4)$ follow from Lemma \ref{lem:incomingEdges}. The equivalence $(1)\Leftrightarrow (2)$ is Theorem \ref{thm: O- strat iff O+}. Thus, the first four conditions are equivalent. 

    The implication $(6)\Rightarrow (5)$ follows from Lemma \ref{lem:C=closO}. The implication $(5)\Rightarrow (6)$ is trivial as $\CCC$ is always a poset.
The implication $(1)\Rightarrow (7)$ is Corollary \ref{cor:gradthen graded}. The implication $(7)\Rightarrow (6)$ is obvious. 

    Next we prove $(6)\Rightarrow (1)$. If $\OO$ is a poset, then by Lemma \ref{lem:C=closO}, we know that $\OO=\CCC$. Furthermore, by Lemma~\ref{lem:easyorderinclusions}, we know that $\BB^-=\OO$ is a poset. Therefore, for any vertex $v$, the face $F^-(v)$ does not have incoming edges. Thus by Lemma \ref{lem:incomingEdges}, condition $(1)$ follows.

    So far, we proved the equivalence of the first seven conditions.

    We now prove that $(1)\Rightarrow (8)$. Since ${O^+(v)\cap O^-(w)}\subseteq F^+(v)\cap F^-(w)$, we have
    \[
    \overline{O^+(v)\cap O^-(w)}\subseteq F^+(v)\cap F^-(w).
    \]
    Let $G=F^+(v)\cap F^-(w)$. Then, the statement is trivial when $G=\emptyset$. If $G\neq \emptyset$, by Corollary \ref{cor:gradthen graded}, $w\in F^+(v)$ and $v\in F^-(w)$. Therefore, in $G$, $\ell$ attains minimum at $v$ and maximum at $w$. 
    Thus the interior of $G$ belongs to $O^+(v)\cap O^-(w)$, and hence, $G\subset \overline{O^+(v)\cap O^-(w)}$. Condition $(8)$ follows.

    Finally, we prove $(8)\Rightarrow (3)$. 
    For any vertex $v$ and any directed edge $v_1\to v_2$ with $v_2\in F^-(v)$. We have 
    \[
    v_2\in F^+(v_1)\cap F^-(v)=\overline{O^+(v_1)\cap O^-(v)}.
    \]
    In particular,  $O^+(v_1)\cap O^-(v)$ is nonempty, i.e.,~there exists a face $G$ on which $\ell$ attains maximum at $v$ and minimum at $v_1$. This proves $v_1\in F^-(v)$, and hence, $F^-(v)$ has no incoming edges.
\end{proof}

\begin{example}\label{exm:triandquad}
    We note that the equivalent conditions in Theorem \ref{thm:equivalences} do not depend only on the polytope $P$, but also on the function $\ell$. In dimension two, for any triangle, the equivalent conditions hold for general $\ell$. 
    
    For quadrilaterals, we observe the dependence from the two examples in Figure \ref{figure1}. 
    In particular, the equivalent conditions hold for a quadrilateral if and only if there is no edge between the source and the sink.






For $n\geq 5$, any $n$-gon does not satisfy the conditions in Theorem \ref{thm:equivalences} for any $\ell$.
\end{example}

Similarly to Proposition \ref{prop:construct examples}, we have the following constructions that allow to obtain many examples of polytopes satisfying both Assumption \ref{ass:irred} and the conditions in Theorem \ref{thm:equivalences}. 
\begin{proposition}\label{prop:constructexamples}
\begin{enumerate}
\item     If $(P_1,\ell_1)$ and $(P_2,\ell_2)$ satisfy both Assumption \ref{ass:irred} and the equivalent conditions in Theorem \ref{thm:equivalences}, then so does $(P_1\times P_2, \ell_1\times \ell_2)$.
\item If $P\subset \RR^n$ and $\ell: \RR^n\to \RR$ satisfy both Assumption \ref{ass:irred} and the equivalent conditions in Theorem \ref{thm:equivalences}, then so does the polytope in $\RR^{n+1}$ that is a pyramid over $P$ with new vertex $v$ that is the new source or sink. 
\item If $P$ satisfies Assumption \ref{ass:irred}, then any face of $P$ also satisfies Assumption \ref{ass:irred}. Moreover, if $P$ also satisfies the equivalent conditions in Theorem \ref{thm:equivalences}, then so does any face of $P$. 
    \end{enumerate}
\end{proposition}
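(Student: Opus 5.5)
For each of the three parts, the plan is to check Assumption~\ref{ass:irred} first and then verify one convenient condition from Theorem~\ref{thm:equivalences}. The natural choice is condition (3): no face $F^-(v)$ has an incoming edge. This condition is purely combinatorial and is easy to track under products, pyramids and passage to faces.

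Part (3) is essentially already done. The lemma following Assumption~\ref{ass:irred} shows that irreducibility passes to faces. Corollary~\ref{cor:stratonface} then shows that if $O^-$ is a stratification on $P$, it induces one on every face $G$. By Lemma~\ref{lem:F-intersect gives F-}(1), this induced stratification is exactly $O^-_G$. So condition (1) holds for $G$.

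For part (1), I would use the product formula from the proof of Proposition~\ref{prop:construct examples}, namely $O^-_{P_1\times P_2}(v,w)=O^-_{P_1}(v)\times O^-_{P_2}(w)$. Taking closures gives $F^-_{P_1\times P_2}(v,w)=F^-_{P_1}(v)\times F^-_{P_2}(w)$. A product of faces is a face, so irreducibility holds, and the same argument with minima handles $F^+$. The stratification property is Proposition~\ref{prop:construct examples}(1). Alternatively, one checks (3) directly: every edge of $P_1\times P_2$ has the form $e\times\{w'\}$ or $\{v'\}\times e$. An incoming edge of $F^-_{P_1}(v)\times F^-_{P_2}(w)$ would therefore yield an incoming edge of one of the factors.

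For part (2), let $v_0$ be the apex and assume it is the new sink; the source case is symmetric. For a base vertex $v$, every face of $\Con(P)$ containing $v_0$ has its maximum at $v_0$. So the faces with maximum at $v$ are exactly the faces of $P$ with maximum at $v$, giving $F^-_{\Con(P)}(v)=F^-_P(v)$. Also $F^-_{\Con(P)}(v_0)=\Con(P)$. For $F^+$, the faces of $\Con(P)$ with minimum at $v$ are the faces $G$ of $P$ with minimum at $v$ together with their cones $\Con(G)$. Hence $F^+_{\Con(P)}(v)=\Con(F^+_P(v))$, and $F^+(v_0)=\{v_0\}$. All of these are faces, so irreducibility holds. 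In the source case the roles swap: $F^-_{\Con(P)}(v)=\Con(F^-_P(v))$ and $F^+_{\Con(P)}(v)=F^+_P(v)$. The stratification property is Proposition~\ref{prop:construct examples}(2), or directly condition (3). In the sink case, the only new edges are $u\to v_0$, and these are never incoming to $F^-_P(v)$ because $v_0\notin F^-_P(v)$. In the source case, the new edges $v_0\to u$ start inside every cone $\Con(F^-_P(v))$.

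The only step needing care is the bookkeeping of which faces of the pyramid have their extremum at a base vertex. Once those identifications are written down, the rest is immediate from earlier results.
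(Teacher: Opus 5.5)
Your proposal is correct and takes essentially the same approach as the paper. Part (3) is exactly the paper's argument: irreducibility passes to faces through $F^\pm_G(v)=F^\pm_P(v)\cap G$, and the stratification property passes to faces by Corollary~\ref{cor:stratonface}. For parts (1) and (2), which the paper dismisses as straightforward, your identifications of $F^\pm$ on products and pyramids, together with Proposition~\ref{prop:construct examples} or a direct check of condition (3), correctly supply the omitted details.
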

\begin{proof}
    The statements (1) and (2) are straightforward. For (3), assume $P$ satisfies  Assumption \ref{ass:irred}. Let $G$ be a face of $P$ and $v$ be a vertex of $G$. Then both $F^-_G(v)=F^-_P(v)\cap G$ and $F^+_G(v)=F^+_P(v)\cap G$ are irreducible. Hence the first assertion follows. 
    The second part follows immediately from Corollary~\ref{cor:stratonface}. 
\end{proof}
\begin{remark}
    We note that Theorem \ref{thm:equivalences} is false without Assumption \ref{ass:irred}. Indeed, for Example \ref{exm:nostrat} conditions (3), (4), (5), (6) hold, while (1), (2), (7) and (8) do not. 
\end{remark}
\begin{remark}\label{rem:O is subposet of face poset}
    If Assumption~\ref{ass:irred} is satisfied, we may associate to every vertex $v$ the face $F^-(v)$. If additionally $\mathcal{O}$ is a poset, then it is the restriction of the face poset with inclusion, via $v\mapsto F^-(v)$. Moreover, this restriction preserves the grading.
\end{remark}

We conclude this section with the following lemma that will be used later.

\begin{lemma}\label{lem:dimensionsbehavenicely}
Assume that $P$ satisfies Assumption \ref{ass:irred} and $O^-$ is a stratification. For any vertices $u, w$ with $\OO(u,w)$. Then,
\[
\dim F^-(u)+\dim F^+(u)\cap F^-(w)+\dim F^+(w)=\dim P.
\]
\end{lemma}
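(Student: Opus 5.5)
Let $G:=F^+(u)\cap F^-(w)$. By Assumption~\ref{ass:irred} this is a face of $P$. Since $\OO(u,w)$ holds, we know $w\in F^+(u)$ and $u\in F^-(w)$, so $G$ is nonempty. Moreover $\ell$ attains its minimum on $G$ at $u$ and its maximum at $w$; indeed, $G$ is precisely the largest witness for $\OO(u,w)$. The plan is to apply Lemma~\ref{lem:dimsum=dimP} twice: once inside the face $F^-(w)$, and once in $P$ itself.

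\textbf{Step 1 (inside $F^-(w)$).} Put $Q:=F^-(w)$. By Proposition~\ref{prop:constructexamples}(3), $Q$ satisfies Assumption~\ref{ass:irred}, and $O^-_Q$ is a stratification. Applying Lemma~\ref{lem:dimsum=dimP} to $Q$ at the vertex $u$ gives
\[
\dim F^-_Q(u)+\dim F^+_Q(u)=\dim Q=\dim F^-(w).
\]
By Lemma~\ref{lem:F-intersect gives F-} we have $F^-_Q(u)=F^-(u)\cap Q$. Since $O^-$ is a stratification and $u\in F^-(w)$, we get $F^-(u)\subset F^-(w)$, so $F^-_Q(u)=F^-(u)$. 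By the $+$-analogue of Lemma~\ref{lem:F-intersect gives F-} (same proof), $F^+_Q(u)=F^+(u)\cap F^-(w)=G$. Therefore
\[
\dim F^-(u)+\dim G=\dim F^-(w).
\]

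\textbf{Step 2 (in $P$).} Lemma~\ref{lem:dimsum=dimP} applied to $P$ at $w$ gives $\dim F^-(w)+\dim F^+(w)=\dim P$. Substituting Step 1 into this yields
\[
\dim F^-(u)+\dim G+\dim F^+(w)=\dim P,
\]
which is the claim.

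\textbf{Main check.} The only subtle point is the identification $F^+_Q(u)=F^+(u)\cap Q$ for a face $Q$. Part (2) of Lemma~\ref{lem:F-intersect gives F-} was stated only for $F^-$, but its proof is symmetric. The inclusion $F^+_Q(u)\subset F^+(u)\cap Q$ is immediate. For the reverse inclusion, take any face $R$ of $P$ on which $\ell$ attains its minimum at $u$. Then $R\cap Q$ is a face of $Q$ containing $u$, on which $\ell$ is still minimized at $u$, so $R\cap Q\subset F^+_Q(u)$. Taking the union over all such $R$ gives $F^+(u)\cap Q\subset F^+_Q(u)$. The remaining facts used are the inclusion $F^-(u)\subset F^-(w)$, which is Lemma~\ref{lem:incomingEdges} combined with $\BB^-=\OO$, and the heredity of the stratification property to faces, which is Corollary~\ref{cor:stratonface}.
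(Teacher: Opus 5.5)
Your proof is correct, but it takes a different route from the paper's.

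The paper chooses a maximal saturated chain $v_0\to\cdots\to v_k$ through $u=v_i$ and $w=v_j$. By Corollary~\ref{cor:gradthen graded}, its three segments lie in $F^-(u)$, $F^+(u)\cap F^-(w)$ and $F^+(w)$, and each is maximal saturated there. Corollary~\ref{cor: containing chains=containing faces} then gives the three dimensions $i$, $j-i$, $k-j$, which sum to $k=\dim P$.

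You instead use the equality case of Lemma~\ref{lem:dimsum=dimP} as a black box twice: once inside the face $Q=F^-(w)$ at $u$, and once in $P$ at $w$. This needs two face-restriction identities:
\begin{itemize}
\item $F^-_Q(u)=F^-(u)\cap Q=F^-(u)$;
\item $F^+_Q(u)=F^+(u)\cap Q$.
\end{itemize}
Your check of the second one (the $+$-analogue of Lemma~\ref{lem:F-intersect gives F-}(2)) is correct. So is your use of Corollary~\ref{cor:stratonface} to pass the stratification hypothesis to $Q$.

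Your version is more modular. It isolates the intermediate identity $\dim F^+(u)\cap F^-(w)=\dim F^-(w)-\dim F^-(u)$, which is exactly how the lemma is later used in the proof of Theorem~\ref{thm:simpleCH}. The paper's version gets all three dimensions at once from a single chain and avoids the restriction bookkeeping. The equality case of Lemma~\ref{lem:dimsum=dimP} is itself proved by concatenating chains and applying Corollary~\ref{cor: containing chains=containing faces}, so both arguments ultimately rest on the same chain-length machinery.

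One small simplification: $F^-(u)\subset F^-(w)$ follows directly from the definition of a stratification. Since $u\in O^-(u)\cap F^-(w)$, we get $O^-(u)\subset F^-(w)$, and $F^-(w)$ is closed. You do not need to go through Lemma~\ref{lem:incomingEdges}.
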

\begin{proof}
    Choose a maximal saturated chain $v_0\to v_1\to \cdots \to v_k$ which contains $u$ and $w$.
Assume that $u=v_i$ and $w=v_j$. By Corollary \ref{cor:gradthen graded}, the chain $v_0\to \cdots \to v_i$ is contained in $F^-(u)$, the chain $v_i\to \cdots \to v_j$ is contained in $F^+(u)\cap F^-(w)$, and the chain $v_j\to \cdots \to v_k$ is contained in $F^+(w)$. Since $v_0\to v_1\to \cdots v_k$ is a saturated chain in $P$, $v_0\to \cdots \to v_i$ is maximal saturated chain in $F^-(u)$. Hence, by Corollary \ref{cor: containing chains=containing faces},
    \[
    \dim F^-(u)=i,
    \]
    and similarly, we have
    \[
    \dim F^+(u)\cap F^-(w)=j-i,\quad \dim F^+(w)=k-j. 
    \]
    Therefore, 
    \[
    \dim F^-(u)+\dim F^+(u)\cap F^-(w)+\dim F^+(w)=i+(j-i)+(k-j)=k=\dim P. \qedhere
    \]
\end{proof}

\section{Chow quotients of polytopes}\label{sec:Chowpolytope}

\subsection{Fiber polytopes and Chow quotients}
In their seminal papers, Billera and Sturmfels \cite{billera1992fiber} and  Kapranov, Sturmfels and Zelevinsky \cite{kapranov1991quotients} developed a combinatorial framework for taking quotients of toric varieties by (possibly low dimensional) tori. In our setting, the key point is that the Chow quotient of a toric variety by a torus action is again a toric variety, whose associated polytope can be described as a fiber polytope. In case of action of one-dimensional torus $\C^*$, fiber polytopes are also called \emph{monotone path polytopes} and described in detail in \cite[Section 5]{billera1992fiber}. 

Given a positive-dimensional polytope $P$ and a linear function $\ell$ that is nonconstant on $P$, the monotone path polytope is defined as the Minkowski sum of explicitly described cuts of $P$ given by $\ell=c$, as $c$ varies. We will denote this monotone path polytope by $\CH_\ell(P)$ or simply $\CH(P)$. When $P$ is a very ample lattice polytope, $\CH(P)$ represents the normalization of the Chow quotient of the toric variety associated with $P$ by the $\C^*$-action determined by $\ell$. When $P$ is positive-dimensional, the dimension of the monotone path polytope $\CH_\ell(P)$ is always $\dim P-1$. When $P$ is a point, we define $\CH(P)$ to be a point as well. 

We summarize the main results of \cite{billera1992fiber} as the following two theorems. The first theorem describes the set of faces of $\CH(P)$. 


\begin{theorem}\label{thm:facesCH}
Given a linear function $\ell$, faces of the monotone path polytope $\CH(P)$ are in bijection with sequences of faces $F_1,\dots,F_k$ of $P$ such that:
\begin{enumerate}
    \item the maximal vertex of $F_i$ (with respect to $\ell$) is the minimal vertex of $F_{i+1}$,
    \item the minimal vertex of $F_1$ is the source in $P$,
    \item the maximal vertex of $F_k$ is the sink in $P$,
    \item there exists a linear function $\mu$ so that for every cut $P\cap \{x:\ell(x)=c\}$, the function $\mu$ is minimized exactly on $F_i\cap \{x:\ell(x)=c\}$ for some $1\leq i\leq k$.
\end{enumerate}
Moreover, the face of $\CH(P)$ corresponding to $(F_1,\dots, F_k)$ is normally equivalent to
the Minkowski sum of monotone path polytopes $\CH(F_i)$. 
\end{theorem}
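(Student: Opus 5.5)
The plan is to derive the statement from the Minkowski-integral description of the monotone path polytope. Write $m=\ell(\hat 0)$, $M=\ell(\hat 1)$ and $P_c=P\cap\{\ell=c\}$. Up to translation and scaling, $\CH(P)=\frac{1}{M-m}\int_m^M P_c\,dc$, or equivalently a finite Minkowski sum of fibers. Concretely, let $m=c_0<c_1<\dots<c_r=M$ be the distinct $\ell$-values of vertices. On each open interval $(c_{j-1},c_j)$ the fibers $P_c$ are all normally equivalent, and $c\mapsto P_c$ is affine in $c$ there. Hence the integral is a positive combination of the midpoint fibers $P_{(c_{j-1}+c_j)/2}$. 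This is the definition from \cite{billera1992fiber}, which I take as given.

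\textbf{Step 1: faces of $\CH(P)$.} The face of a Minkowski sum (or integral) minimizing a linear functional $\mu$ is the sum (or integral) of the faces of the summands minimizing $\mu$. So $\CH(P)^\mu=\frac{1}{M-m}\int_m^M (P_c)^\mu\,dc$, and two functionals give the same face if and only if they select the same family $c\mapsto (P_c)^\mu$ for almost every $c$.

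\textbf{Step 2: the selected fiber faces come from faces of $P$.} For generic $c$, $(P_c)^\mu=G_c\cap\{\ell=c\}$, where $G_c$ is the smallest face of $P$ meeting $\{\ell=c\}$ and minimizing $\mu$ on $P_c$. Equivalently, $G_c$ is the face of $P$ minimizing $\mu+t\ell$ for a suitable $t=t(c)$; this is linear-programming duality on the fiber. As $c$ increases, $t$ decreases monotonically. Therefore the distinct faces $G_c$ occur in order as $F_1,\dots,F_k$, with $F_i$ and $F_{i+1}$ sharing exactly one vertex. That vertex is the $\ell$-maximum of $F_i$ and the $\ell$-minimum of $F_{i+1}$, which gives condition~(1). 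The first face must contain $\hat 0$ and the last must contain $\hat 1$, because the fibers near $m$ and $M$ shrink to those vertices; this gives (2) and (3). Condition~(4) holds by construction with this $\mu$.

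\textbf{Step 3: bijection and normal type.} Conversely, a sequence satisfying (1)--(4) determines, via its witness $\mu$, the face $\int (F_i\cap\{\ell=c\})\,dc$. Distinct sequences give distinct faces, because one recovers the $F_i$ as the minimal faces of $P$ containing the fiber faces. Inclusion of faces of $\CH(P)$ corresponds to refinement of sequences, though this is not needed for the statement. Finally, the face associated to $(F_1,\dots,F_k)$ splits as $\sum_i\int_{\ell(F_i)}(F_i)_c\,dc$. Each summand is, up to scaling and translation, $\CH(F_i)$, which gives the normal equivalence with $\sum_i \CH(F_i)$.

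\textbf{Main obstacle.} The delicate point is Step 2: showing that the family $c\mapsto G_c$ is piecewise constant, with breakpoints only at vertex values, and that consecutive faces glue exactly at a single vertex. The monotonicity of the dual parameter $t(c)$ in Step 2 underlies both of these facts. I would handle this by the parametric linear-programming argument with $\mu+t\ell$, using the genericity of $\ell$ on edges. If that becomes unwieldy, I would simply cite \cite[Section 5]{billera1992fiber}, since the theorem is their result.
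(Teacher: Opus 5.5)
Your proposal is correct in outline. Note, though, that the paper gives no proof of this statement: it is quoted as a summary of Billera--Sturmfels \cite{billera1992fiber}. There it is the special case, for a segment base, of the general description of faces of a fiber polytope by coherent $\pi$-induced subdivisions; over a segment these are exactly the coherent cellular strings $(F_1,\dots,F_k)$.

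Your route is a direct derivation specific to one-dimensional fibrations, in three steps:
\begin{enumerate}
\item write $\CH(P)$ as a weighted Minkowski sum of midpoint fibers;
\item use that a face of a Minkowski sum decomposes uniquely into faces of the summands;
\item identify the selected fiber faces by parametric LP in $\mu+t\ell$.
\end{enumerate}

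The cleanest way to run your Step~2 is through the shadow polygon $\pi(P)$, where $\pi=(\ell,\mu)$. The function $\phi(c)=\min_{P_c}\mu$ is the lower boundary of $\pi(P)$, so it is convex and piecewise linear, with breaks only at vertex values. The $F_i$ are the faces $P^{\mu+t_i\ell}$, i.e.\ the preimages of the lower edges, and they come in the order $t_1>\dots>t_k$. The preimage of the lower vertex between two consecutive edges is a face on which both $\ell$ and $\mu$ are constant. By genericity of $\ell$ it is a single vertex, which gives (1)--(3) and the gluing of $F_i$, $F_{i+1}$ in exactly one vertex at once.

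Two small points should be made explicit.
\begin{itemize}
\item Condition (4) is required for every $c$, including vertex values. At a joint vertex $w=\max F_i=\min F_{i+1}$, use a multiplier strictly between $t_{i+1}$ and $t_i$ to see that $\mu$ is minimized on $P_{\ell(w)}$ exactly at $w$.
\item In the normal-equivalence step the summands are $(\ell(\max F_i)-\ell(\min F_i))\,\CH(F_i)$ rather than $\CH(F_i)$. This is harmless, since positive rescaling of summands does not change the normal fan of a Minkowski sum.
\end{itemize}

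Your approach makes transparent why consecutive faces meet in a single vertex and why only conditions (1)--(4) appear. The citation gives, in addition, the face-lattice statement of Theorem~\ref{thm:facelatticeCH}, which your sketch leaves aside.
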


In particular, we get that the face of $\CH(P)$ corresponding to $(F_1,\dots, F_k)$ is combinatorially equivalent to the Minkowski sum of monotone path polytopes $\CH(F_i)$. 

Every linear function $\mu$ defines a face of $\CH(P)$ in the following way. For every slice $P\cap \{x:\ell(x)=c\}$ denote by $P_c^\mu$ the face of the slice which minimizes $\mu$. Then one can see that the union $\bigcup_c P_c^\mu$ coincides with the union of a sequence of faces $(F_1,\dots, F_k)$ of $P$ satisfying conditions of Theorem~\ref{thm:facesCH}, which defines a face of $\CH(P)$. 
The second theorem describes the poset structure of the faces of $\CH(P)$. 
\begin{theorem}
\label{thm:facelatticeCH}
   Two faces in $\CH(P)$ corresponding to sequences $(F_1, \ldots, F_k)$ and $(G_1, \ldots, G_j)$ have non-empty intersection if and only if
\[
(F_1\cup \cdots \cup F_k)\cap (G_1\cup \cdots \cup G_j)
\]
is the union of a chain of faces $(H_1,\ldots,H_s)$ satisfying condition (1) in Theorem~\ref{thm:facesCH}.
In that case, the intersection is a face of $\CH(P)$ given by the sequence $(H_1,\ldots,H_s)$.

In particular, the face of $\CH(P)$ corresponding to $(F_1, \ldots, F_k)$ contains the face corresponding to $(G_1, \ldots, G_j)$ if and only if
\[
G_1\cup \cdots \cup G_j\subset F_1\cup \cdots \cup F_k.
\]
\end{theorem}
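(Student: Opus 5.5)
The plan is to work directly with the Minkowski-integral description of the monotone path polytope and read everything off slice by slice.

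\textbf{Setup.} Let $\ell(P)=[a,b]$ and write $P_c=P\cap\{\ell=c\}$. Up to translation and scaling,
\[
\CH(P)=\int_a^b P_c\,dc .
\]
On each open interval between consecutive vertex values of $\ell$, the slices $P_c$ are all normally equivalent. Hence $\CH(P)$ is normally equivalent to a finite Minkowski sum $\sum_j P_{c_j}$, with one representative slice for each open interval and one for each vertex value. Consequently the normal fan of $\CH(P)$ is the common refinement of the normal fans of the slices $P_c$.

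A face of a Minkowski sum minimizing $\mu$ is the sum of the faces of the summands minimizing $\mu$. So the face of $\CH(P)$ cut out by $\mu$ is $\int P_c^\mu\,dc$, and it is determined by the family $(P_c^\mu)_c$, equivalently by the union $U_\mu=\bigcup_c P_c^\mu=F_1\cup\dots\cup F_k$. This union recovers the sequence $(F_i)$ from Theorem~\ref{thm:facesCH}, since the $F_i$ are exactly the pieces of $U_\mu$ between consecutive breakpoints.

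\textbf{Intersection criterion.} Let $\mathcal F$ and $\mathcal G$ be the faces of $\CH(P)$ given by $\mu$ and $\mu'$. I will use the following general fact about Minkowski sums $Q=\sum Q_j$. The faces $\sum_j Q_j^{\mu}$ and $\sum_j Q_j^{\mu'}$ intersect if and only if their closed normal cones meet in the common refinement. This happens if and only if there is a $\mu''$ with $Q_j^{\mu''}=Q_j^{\mu}\cap Q_j^{\mu'}$ for all $j$, which in particular requires all these intersections to be nonempty. In that case $\mathcal F\cap\mathcal G$ is the face given by $\mu''$.

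Applying this to the slices gives the following. Suppose $\mathcal F\cap\mathcal G\neq\emptyset$. Then for every $c$ we have $P_c^{\mu''}=P_c^\mu\cap P_c^{\mu'}$, so
\[
U_{\mu''}=U_\mu\cap U_{\mu'}=(F_1\cup\dots\cup F_k)\cap(G_1\cup\dots\cup G_j).
\]
Since $U_{\mu''}$ is the union of the sequence $(H_1,\dots,H_s)$ attached to $\mu''$ by Theorem~\ref{thm:facesCH}, condition (1) holds for it, and the intersection face is the one given by $(H_i)$.

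Conversely, suppose the intersection of the unions is a union of a chain $(H_i)$ satisfying condition (1). Each $H_i$ lies inside a single $F$-piece and a single $G$-piece; more precisely, the restriction of $H_i$ to a slab between consecutive vertex values lies in one face $F_{i'}$ and one face $G_{i''}$. The chain is connected through the vertices $\max H_i=\min H_{i+1}$, and it sweeps every level $c$ between its endpoints. The endpoints are forced to be the source and the sink, because every slice meets both $U_\mu$ and $U_{\mu'}$ and those two unions share the source and the sink. Therefore $P_c^\mu\cap P_c^{\mu'}\neq\emptyset$ for every $c$.

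\textbf{Main obstacle.} Nonemptiness of each slice intersection is not yet enough. I still need a single functional $\mu''$ that cuts out $P_c^\mu\cap P_c^{\mu'}$ simultaneously in all slices. I would take $\mu''=\mu+\mu'$. Since $P_c^\mu$ and $P_c^{\mu'}$ are faces of the same polytope $P_c$ with nonempty intersection, $\mu+\mu'$ attains its minimum on $P_c$ exactly on $P_c^\mu\cap P_c^{\mu'}$, uniformly in $c$. This is the step I expect to need the most care, because the definitions must be pinned down so that $\mu$ and $\mu'$ are generic in the relative interiors of the normal cones. Once it is in place, $\mathcal F\cap\mathcal G$ is the face given by $\mu''$, which corresponds to the sequence $(H_i)$.

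\textbf{Containment.} For the last assertion, $\mathcal G\subseteq\mathcal F$ holds if and only if $\mathcal F\cap\mathcal G=\mathcal G$. By the above, this is equivalent to
\[
U_\mu\cap U_{\mu'}=U_{\mu'},\qquad\text{that is,}\qquad G_1\cup\dots\cup G_j\subseteq F_1\cup\dots\cup F_k .
\]
Here I use that a face of $\CH(P)$ is determined by its union, as established in the setup.
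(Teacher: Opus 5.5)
Your argument is correct, and it takes a different route from the paper only in the sense that the paper gives no proof. There the statement is quoted as part of the Billera--Sturmfels theory: faces of the fiber polytope correspond to coherent cellular strings, ordered by refinement.

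You instead derive the statement directly from the Minkowski-integral description $\CH(P)=\int P_c\,dc$, in three steps:
\begin{itemize}
\item a face $\CH(P)^{\mu}$ is determined by the slice faces $P_c^{\mu}$;
\item two faces of a Minkowski sum meet if and only if all corresponding summand faces meet, a property transferred from representative slices to every slice by normal equivalence;
\item $\mu+\mu'$ then cuts out the intersection, both on $\CH(P)$ and on every slice at once.
\end{itemize}
This is essentially the mechanism behind Billera--Sturmfels, specialized to a one-dimensional base. The citation is shorter, but it leaves the reader to translate ``meet of coherent subdivisions'' into the union-intersection form stated here. Your route proves exactly the stated form. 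It also isolates a cleaner equivalent criterion: $\mathcal F\cap\mathcal G\neq\emptyset$ if and only if $U_\mu\cap U_{\mu'}$ meets every level set $\{\ell=c\}$ with $\ell(\hat 0)\le c\le \ell(\hat 1)$. In the converse, the chain hypothesis is only used through this criterion, because $\hat 0,\hat 1\in U_\mu\cap U_{\mu'}$.

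Three small corrections.

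\emph{The ``main obstacle'' is not an obstacle.} For any polytope $R$ and any functionals, $\min_R(\mu+\mu')\ge \min_R\mu+\min_R\mu'$, with equality exactly on $R^{\mu}\cap R^{\mu'}$. So $R^{\mu+\mu'}=R^{\mu}\cap R^{\mu'}$ whenever the right side is nonempty, and no genericity of $\mu,\mu'$ is needed. Applied to $\CH(P)$ itself, this gives $\mathcal F\cap\mathcal G=\CH(P)^{\mu+\mu'}$ immediately.

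\emph{``Closed normal cones meet'' is not the right criterion.} Any two cones of a complete fan meet at the origin. What you actually use, correctly, is the next sentence: slice-wise nonemptiness of $P_c^{\mu}\cap P_c^{\mu'}$.

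\emph{The containment step needs the reverse fact.} The implication $\mathcal G\subseteq\mathcal F\Rightarrow U_{\mu'}\subseteq U_\mu$ requires that the union is an invariant of the face, independent of which functional cuts it out. You only state the converse, that the union determines the face. The needed fact follows from uniqueness of the decomposition of a face of a Minkowski sum into summand faces, or from the bijection in Theorem~\ref{thm:facesCH}.
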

 
When Assumption \ref{ass:irred} and the equivalent conditions in Theorem \ref{thm:equivalences} hold, the faces of the monotone path polytopes have easier characterizations. We recall the stratification assumption from the introduction. 

\begin{assumption}\label{ass:equivalent}
    In addition to Assumption \ref{ass:irred}, we assume that the equivalent conditions of Theorem~\ref{thm:equivalences} hold. 
\end{assumption}
\begin{remark}
   Throughout the rest of the paper, whenever we refer to this assumption, we will suppress the dependence on the linear function $\ell$ and simply say that $P$ satisfies Assumption~\ref{ass:equivalent}.  
\end{remark}

For the remainder of this section, we assume that Assumption~\ref{ass:equivalent} holds, and we will explicitly restate it only in the statements of theorems.
The main result of this section is Theorem~\ref{thm:simpleCH}, which characterizes those polytopes $P$ satisfying Assumption~\ref{ass:equivalent} for which the monotone path polytope $\CH(P)$ is simple. A key intermediate result is Theorem~\ref{thm:facesofCH}, which shows that, in contrast to the general situation, when $P$ satisfies Assumption~\ref{ass:equivalent}, the faces of $\CH(P)$ are described by chains of faces of $P$ only satisfying conditions (1),(2), and (3) of Theorem~\ref{thm:facesCH}.

\begin{lemma}\label{lem:dimatmostP}
    For any sequence of faces $(F_1,\dots, F_k)$ satisfying the conditions in Theorem \ref{thm:facesCH}, we have 
    \[
    \dim F_1+\cdots+\dim F_k\leq \dim P.
    \]
\end{lemma}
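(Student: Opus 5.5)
Let $(F_1,\dots,F_k)$ be a sequence of faces satisfying the conditions of Theorem~\ref{thm:facesCH}, and write $v_{i-1}$ and $v_i$ for the minimal and maximal vertices of $F_i$. Then $v_0$ is the source, $v_k$ is the sink, and $\ell(v_0)<\ell(v_1)<\dots<\ell(v_k)$ (degenerate point-faces can be discarded). Only condition (4) will be used: there is a linear function $\mu$ such that, for every $c$, the face of the slice $P_c=P\cap\{\ell=c\}$ minimizing $\mu$ is exactly $F_i\cap\{\ell=c\}$ for the appropriate $i$. The plan is to compare the dimension of $\bigcup_i F_i$ with $\dim P$ through the dimension of the corresponding face of $\CH(P)$, or more directly through normal cones.

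The most direct route uses the last statement of Theorem~\ref{thm:facesCH}. The face of $\CH(P)$ corresponding to $(F_1,\dots,F_k)$ is normally equivalent to $\CH(F_1)+\dots+\CH(F_k)$. Each $\CH(F_i)$ has dimension $\dim F_i-1$, and it lies in the space of linear functions on $\mathrm{aff}(F_i)\cap\{\ell=0\}$ directions. The key step is to show that these Minkowski summands lie in \emph{independent} linear subspaces, so that the dimension of the sum is $\sum_i(\dim F_i-1)$. The linear span of the direction space of $F_i$ is $L_i$. Since $F_i$ and $F_{i+1}$ meet in the single vertex $v_i$, and the slices $F_i\cap\{\ell=c\}$ for different $c$ come from different ranges, I would argue that the spaces $L_i\cap\ker\ell$ are in direct sum inside $\ker\ell$.

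For that argument I would set $E=\bigcup_i F_i$ and argue that the sum $\sum_i L_i$ has dimension $\sum_i\dim F_i$. Equivalently, the segments $v_{i-1}\to v_i$, together with the spaces $L_i\cap\ker\ell$, are linearly independent. I would use $\mu$ here: every $F_i$ is contained in the face of $P$ on which $\mu-\lambda\ell$ is minimized, for suitable $\lambda$ in an interval $I_i$. The intervals $I_i$ are ordered and adjacent, by the piecewise-linear convexity of $c\mapsto\min_{P_c}\mu$; the slopes of this convex function on $[\ell(v_{i-1}),\ell(v_i)]$ are constant, equal to $\lambda_i$, and strictly increase in $i$. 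A hypothetical linear dependence among vectors from the different $L_i$ would then be tested against the functionals $\mu-\lambda\ell$ with $\lambda$ between consecutive slopes. Such a functional is constant on $F_1\cup\dots\cup F_i$ up to the $\ell$-term and separates the pieces, which gives a triangular structure forcing the dependence to be trivial.

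Once independence holds, $\sum_i\dim F_i=\dim\bigl(\sum_i L_i\bigr)\le\dim P$, which is the claim. The same bound can be read off as $\dim(\text{face of }\CH(P))=\sum_i(\dim F_i-1)\le\dim\CH(P)-(k-1)$ if one knows that the face has codimension at least $k-1$. I expect the independence step to be the main obstacle. It has to come from convexity: a face of the lower hull of the projection $(\ell,\mu)(P)$ with $k$ distinct slopes forces $k-1$ independent breaks. I would first try to make this rigorous via the projection $\pi=(\ell,\mu)\colon P\to\R^2$, under which each $F_i$ maps onto a lower edge of $\pi(P)$. I would then induct on $k$, restricting to $\pi^{-1}$ of the lower boundary and splitting at $v_1$.
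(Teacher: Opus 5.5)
Your plan has a genuine gap at the independence step you flagged, and it cannot be repaired using only condition (4). The lemma sits under the standing hypothesis of Section~\ref{sec:Chowpolytope} that Assumption~\ref{ass:equivalent} holds, and without that hypothesis the inequality is false.

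Take the square pyramid $P=\mathrm{conv}\{0,p_1,p_2,q_1,q_2\}\subset\mathbb{R}^3$ with $p_1=(0,2,-2)$, $p_2=(1,1,-1)$, $q_1=(0,2,2)$, $q_2=(1,1,1)$, whose base lies in the plane $x+y=2$. Let $\ell=z$, which takes distinct values at all vertices. Since $y+z\ge 0$ and $y-z\ge 0$ on $P$, the triangles $F_1=\mathrm{conv}\{0,p_1,p_2\}$ and $F_2=\mathrm{conv}\{0,q_1,q_2\}$ are facets. $F_1$ runs from the source $p_1$ to $0$, and $F_2$ from $0$ to the sink $q_1$. For $\mu=y$ one has $y=(y+z)-c$ on the slice $\{z=c\}$ when $c\le 0$, and $y=(y-z)+c$ when $c\ge 0$. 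So $\mu$ is minimized exactly on $F_1\cap\{z=c\}$, resp.\ $F_2\cap\{z=c\}$. Hence $(F_1,F_2)$ satisfies (1)--(4) of Theorem~\ref{thm:facesCH}, yet $\dim F_1+\dim F_2=4>3=\dim P$.

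In your notation, $L_1\cap\ker\ell$ and $L_2\cap\ker\ell$ are both the $x$-axis. So $\CH(F_1)$ and $\CH(F_2)$ are parallel segments, and the corresponding face of $\CH(P)$ is an edge, not a parallelogram. This failure is structural. Since $\mu-\lambda_i\ell$ vanishes on $L_i$, every $L_i\cap\ker\ell$ lies in the common space $\ker\mu\cap\ker\ell$. All your test functionals $\mu-\lambda\ell$ vanish identically there, so no triangular argument with them can separate the pieces. For the same reason your fallback via the codimension of the face of $\CH(P)$ fails: $\dim(\CH(F_1)+\cdots+\CH(F_k))$ can be strictly smaller than $\sum_i(\dim F_i-1)$.

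The missing ingredient is the stratification hypothesis. It fails in the example: $F^-(0)=F_1$ and $F^+(0)=F_2$ have dimensions summing to $4\neq\dim P$, so the equality case of Lemma~\ref{lem:dimsum=dimP} breaks. The paper's proof does not use condition (4) at all; it goes as follows.

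Let $v$ be the maximal vertex of $F_1$. Then $F_1\subseteq F^-(v)$. Also $F_2,\dots,F_k\subseteq F^+(v)$: each successive minimal vertex lies in $F^+(v)$, and since $O^+$ is a stratification, $F^+(w)\subseteq F^+(v)$ for every vertex $w\in F^+(v)$. By irreducibility these sets are faces, and the face $F^+(v)$ (whose source is $v$) again satisfies Assumption~\ref{ass:equivalent}. Induction on $k$ for chains satisfying (1)--(3), applied inside $F^+(v)$, gives $\dim F_2+\cdots+\dim F_k\le\dim F^+(v)$. The equality $\dim F^-(v)+\dim F^+(v)=\dim P$ from Lemma~\ref{lem:dimsum=dimP} then finishes the proof.

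Geometrically, the assumption is exactly what makes the directions of $F^-(v)$ and $F^+(v)$ at $v$ span complementary subspaces (compare the proof of Lemma~\ref{lem:bendface}). That is the independence you were trying to extract from $\mu$.
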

\begin{proof}
    We prove by induction on $k$. When $k=1$, the inequality is trivial. Let $v$ be the vertex at which $\ell$ attains maximum on $F_1$. We have $F_1\subseteq F^-(v)$ as well as $F_2,\dots, F_k\subseteq F^+(v)$. 
    By inductive hypothesis, we have
    \[
    \dim F_2+\cdots+\dim F_k\leq \dim F^+(v).
    \]
    Thus,
    \[
    \dim F_1+\cdots+\dim F_k\leq \dim F^-(v)+\dim F^+(v)=\dim P,
    \]
    where the last equality follows from Lemma~\ref{lem:dimsum=dimP}. 
\end{proof}

Our next goal is to obtain a more detailed description of the facets of $\CH(P)$. 
The following lemma shows that each facet is of one of two possible types. 
In Corollary~\ref{cor:facetsofCH}, we will show that the classification given in this lemma is bijective.

\begin{lemma}\label{lem:facetsofCHPtwotypes}
    Every facet of $\CH(P)$ is of one of the following types:
    \begin{enumerate}
        \item those corresponding to pairs $(F_1,F_2)$, where $F_1 = F^-(v)$ and $F_2 = F^+(v)$ for some vertex $v$ of $P$ distinct from the source and the sink;
        \item those corresponding to a single face $(F_1)$, where $F_1$ is a facet of $P$ containing both the source and the sink.
    \end{enumerate}
    In the first case, the facet is combinatorially equivalent to $\CH(F_1)\times \CH(F_2)$, while in the second case it is combinatorially equivalent to $\CH(F_1)$.
\end{lemma}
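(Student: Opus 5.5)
A facet of $\CH(P)$ has dimension $\dim P-2$ and corresponds, by Theorem~\ref{thm:facesCH}, to a sequence $(F_1,\dots,F_k)$ satisfying conditions (1)--(4). Since the face is normally equivalent to $\sum_i \CH(F_i)$, I will use the dimension count
\[
\dim\Big(\sum_i \CH(F_i)\Big)=\sum_i (\dim F_i-1)=\sum_i\dim F_i - k .
\]
All $F_i$ are positive-dimensional: a point $F_i$ would force $F_{i+1}$ to start at the same vertex, and such a trivial term is excluded from the bijection. The count is valid because the $\CH(F_i)$ live in complementary "slice ranges", so their Minkowski sum is combinatorially the product. This is exactly the claim in the statement that the face is equivalent to $\prod\CH(F_i)$.

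Combining the count with Lemma~\ref{lem:dimatmostP}, a facet satisfies $\dim P-2=\sum\dim F_i-k\le \dim P-k$. Hence $k\le 2$. Moreover, for $k=1$ we get $\dim F_1=\dim P-1$, and for $k=2$ we get $\dim F_1+\dim F_2=\dim P$.

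For $k=1$: $F_1$ is a facet of $P$, and condition (2) (the minimal vertex of $F_1$ is the source) together with condition (3) (the maximal vertex of $F_1$ is the sink) shows that it contains both. This is type (2).

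For $k=2$: let $v$ be the common vertex, so $\ell$ attains its maximum on $F_1$ at $v$ and its minimum on $F_2$ at $v$. By Assumption~\ref{ass:irred}, $F_1\subseteq F^-(v)$ and $F_2\subseteq F^+(v)$, which are faces. Lemma~\ref{lem:dimsum=dimP} gives $\dim F^-(v)+\dim F^+(v)=\dim P$. Together with $\dim F_1+\dim F_2=\dim P$, this forces $\dim F_1=\dim F^-(v)$ and $\dim F_2=\dim F^+(v)$. Since a face contained in a face of the same dimension is equal to it, $F_1=F^-(v)$ and $F_2=F^+(v)$. Finally, $v$ is neither the source nor the sink, since both $F_i$ are positive-dimensional. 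This is type (1).

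The main obstacle is justifying the dimension formula for the Minkowski sum, i.e.\ that the pieces contribute independently. I would argue this by noting that $\CH(F_i)$ is a sum of slices of $F_i$ over disjoint $\ell$-intervals. The affine spans of the $\CH(F_i)$ are then independent, since each lies in the $\ell$-level directions of $F_i$, and these are combined in the fiber polytope embedding, which integrates slices over disjoint intervals. If this proves delicate, I would instead cite Billera--Sturmfels, who show that the face is the product $\prod\CH(F_i)$ up to normal equivalence; the last assertion of the lemma follows directly from that result.
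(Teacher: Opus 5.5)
Your skeleton is the paper's: a dimension count, then Lemma~\ref{lem:dimatmostP} to get $k\le 2$, then Lemma~\ref{lem:dimsum=dimP} plus irreducibility to force $F_1=F^-(v)$ and $F_2=F^+(v)$. The gap is the step you flag yourself. You assert $\dim\big(\sum_i\CH(F_i)\big)=\sum_i(\dim F_i-1)$ for every face, i.e.\ that the Minkowski sum is always combinatorially a product, on the grounds that the pieces come from disjoint $\ell$-ranges. That inference is false. Each $\CH(F_i)$ lies in a translate of $\mathrm{lin}(F_i-v_{i-1})\cap\ker\ell$, and disjointness of the $\ell$-intervals says nothing about these subspaces being independent.

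For example, take $P=\mathrm{conv}\{0,(1,0,-1),(-1,0,-2),(1,1,1),(-1,2,2)\}$ with $\ell=z$. The triangles $F_1=P\cap\{y=0\}$ and $F_2=P\cap\{y=z\}$ form a coherent string, with $\mu=y$ as witness. But both $\CH(F_i)$ are segments parallel to $(1,0,0)$, so $[F_1,F_2]$ is an edge of the polygon $\CH(P)$, not a square. This $P$ violates the stratification assumption, so it does not contradict the lemma, only your general principle. Your fallback citation also fails: Billera--Sturmfels give normal equivalence to the Minkowski sum (Theorem~\ref{thm:facesCH}), not to the product. Under Assumption~\ref{ass:equivalent} the dimension formula does hold, but that is Theorem~\ref{thm:facesofCH}, which the paper derives from this very lemma, so you cannot invoke it here.

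The repair is what the paper does: use only the always-valid subadditivity $\dim\big(\sum_i\CH(F_i)\big)\le\sum_i\dim\CH(F_i)$. Then $\dim P-2\le\sum_i\dim F_i-k\le\dim P-k$ still gives $k\le 2$. For $k=1$ there is no sum to worry about. For $k=2$ the chain collapses into equalities, which gives both $\dim F_1+\dim F_2=\dim P$ (the input you need) and $\dim\big(\CH(F_1)+\CH(F_2)\big)=\dim\CH(F_1)+\dim\CH(F_2)$. The latter equality says the direction spaces of the two affine hulls are independent. Hence $(a,b)\mapsto a+b$ is an affine isomorphism $\CH(F_1)\times\CH(F_2)\to\CH(F_1)+\CH(F_2)$. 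That is what proves the product assertion for type (1) facets; it is a consequence here, not a general theorem you can cite.
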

\begin{proof}
    By Theorem \ref{thm:facesCH}, every facet must be represented by a sequence $(F_1,\dots,F_k)$, where $F_1$ contains the source and $F_k$ contains the sink. 
    Since the facet is combinatorially equivalent to $\CH(F_1)+\cdots +\CH(F_k)$, its dimension satisfies
    \begin{multline*}
    \dim P-2=\dim \CH(P)-1=\dim\big(\CH(F_1)+\cdots +\CH(F_k)\big)\\
    \leq \dim \CH(F_1)+\cdots +\dim \CH(F_k)=\dim F_1+\cdots+\dim F_k-k\leq \dim P-k
    \end{multline*}
    where the last inequality follows from Lemma \ref{lem:dimatmostP}. Thus, we must have $k=1$ or 2. 
    
    When $k=1$, we have $\dim \CH(F_1)=\dim \CH(P)-1$, which implies that $\dim F_1=\dim P-1$. 
    
    When $k=2$, both inequalities in the above formula must be equalities. Thus, 
    \[
    \dim F_1+\dim F_2=\dim P.
    \]
    On the other hand, by Lemma \ref{lem:dimsum=dimP},
    \[
    \dim F^-(v)+\dim F^+(v)=\dim P
    \]
    where $v=F_1\cap F_2$. Since $F_1\subset F^-(v)$ and $F_2\subset F^+(v)$, the above two equalities imply that $F_1= F^-(v)$ and $F_2= F^+(v)$. 
    Finally, we get
    \[
    \dim \CH(F^-(v))+\dim \CH(F^+(v))=\dim F^-(v)+\dim F^+(v)-2=\dim P-2=\dim \CH(P)-1. 
    \]
    Therefore, 
\[ \dim\big(\CH(F^-(v))+\CH(F^+(v))\big)
=
\dim \CH(F^-(v))+\dim \CH(F^+(v)),\]
    which implies that
    \[
    \CH(F^-(v))+\CH(F^+(v))\cong \CH(F^-(v))\times \CH(F^+(v)). \qedhere
    \]

\end{proof}
\begin{lemma}\label{lem:facetdeg1}
    Let $v\in P$ be a vertex such that $\dim F^-(v)=\dim P-1$. Then $F^+(v)$ is equal to the directed edge $v\to w$, where $w$ is the sink of $P$. Moreover, the sequence $(F^-(v), F^+(v))=(F^-(v), v\to w)$ defines a facet of $\CH(P)$.
\end{lemma}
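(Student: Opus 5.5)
Under Assumption~\ref{ass:equivalent}, Lemma~\ref{lem:dimsum=dimP} gives $\dim F^+(v)=\dim P-\dim F^-(v)=1$. By Assumption~\ref{ass:irred}, $F^+(v)$ is a face of $P$, so it is an edge. Since $\ell$ attains its minimum on $F^+(v)$ at $v$, this edge is $v\to w$ for some vertex $w$. By Corollary~\ref{cor:max F^+ have sink}, the inclusion-maximal face of $F^+(v)$, namely $F^+(v)$ itself, contains the sink. Hence $w$ is the sink of $P$. (If $v$ were the sink, $F^+(v)$ would be a point of dimension $0$, contradicting the dimension count; so $v$ is not the sink. If $\dim P\ge 2$, then $v$ is not the source either, because $\dim F^-(\text{source})=0<\dim P-1$.)

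For the second claim, I will check that $(F^-(v),F^+(v))$ satisfies conditions (1)--(4) of Theorem~\ref{thm:facesCH}.

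\emph{Conditions (1)--(3).} The maximal vertex of $F^-(v)$ is $v$, which is the minimal vertex of $F^+(v)$. By Corollary~\ref{cor:max F^- have source}, $F^-(v)$ contains the source. We showed above that $F^+(v)$ ends at the sink.

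\emph{Condition (4).} This is the main obstacle. I need a linear functional $\mu$ whose minimizing face on each cut $P\cap\{\ell=c\}$ is the corresponding cut of $F^-(v)$ for $c\le \ell(v)$, and of the edge $vw$ for $c\ge \ell(v)$. I plan to take $\mu=\alpha+t\ell$. Here $\alpha$ is a linear function with $\alpha\ge 0$ on $P$ and with zero set on $P$ equal to the facet $F^-(v)$; the term $t\ell$ is irrelevant on cuts. Then the cuts for $c\le\ell(v)$ are handled, since $F^-(v)$ meets exactly the cuts with $c\in[\ell(\text{source}),\ell(v)]$.

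For $c>\ell(v)$, the facet $F^-(v)$ does not meet the cut, and I need the edge $vw$ to be the minimizer of $\alpha$ on each such slice. For this I will use the structure of $P$ above level $\ell(v)$. Every vertex $u\ne w$ with $\ell(u)>\ell(v)$ lies in $F^-(v)$: indeed, $F^-(v)$ is a facet, and $\dim F^-(u)\le \dim P-1$ for $u\neq w$. Since $O^-$ is a stratification (so the poset is graded by $\rho$), the faces $F^-(u)$ are contained in facets with sink $u$. If there were such a vertex $u\notin F^-(v)$, I would instead derive a contradiction from the grading: $v\to w$ is saturated, so $v$ has rank $\dim P-1$ and is a coatom. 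Then $u$ would also have to lie in a facet through the source, and I would compare it with $F^-(v)$ using the no-incoming-edges condition (3) of Theorem~\ref{thm:equivalences}.

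Granting this, all vertices of $P$ except $w$ lie in the facet $\{\alpha=0\}$, so $P$ is a pyramid over $F^-(v)$ with apex $w$. In that case, for $c>\ell(v)$, the slice is a cone section in which $\alpha$ is minimized exactly on $\{\alpha=0\}\cap$ slice, which is empty. So I must refine the choice: I will take $\mu$ to be a supporting functional of the pyramid that vanishes on $F^-(v)$ and is chosen so that its restriction to the slices above level $\ell(v)$ is minimized along the edge $vw$. Concretely, I will perturb $\alpha$ by a functional that is minimized on $F^-(v)$ exactly at the vertex $v$; for example, $\mu=\alpha+\varepsilon\beta$ with $\beta$ minimized at $v$. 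Above level $\ell(v)$, each slice of the pyramid is a scaled copy of the upper part of $F^-(v)$ coned to $w$, and $\mu$ is minimized at the point on $vw$.

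Below level $\ell(v)$, the perturbation $\varepsilon\beta$ might break the requirement that the minimizer be the whole slice of $F^-(v)$. So the delicate point is to choose $\beta$ constant on the slices of $F^-(v)$ below $\ell(v)$, e.g.\ $\beta$ a function of $\ell$ and $\alpha$ only, such as $\beta=\lambda\alpha-(\ell-\ell(v))_+$ made linear. I expect to handle this by working in the normal fan and using the dimension count of Lemma~\ref{lem:facetsofCHPtwotypes}, so that the face found is a facet.
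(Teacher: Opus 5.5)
Your treatment of the first claim is correct, and more explicit than the paper's one-line appeal to Lemma~\ref{lem:dimsum=dimP}. The gap is in condition (4) for the second claim.

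The structural step you build on is false: you claim every vertex other than $w$ lies in $F^-(v)$, so that $P$ is a pyramid over $F^-(v)$ with apex $w$. First, no vertex $u$ with $\ell(u)>\ell(v)$ can lie in $F^-(v)$ at all, since $v$ maximizes $\ell$ there. So your claim really says that $v$ is the second-highest vertex of $P$ and that every lower vertex lies in $F^-(v)$.

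Here is a counterexample satisfying Assumption~\ref{ass:equivalent}. Take the quadrilateral in Figure~\ref{figure1} with no source--sink edge: source $D$, sink $A$, edges $D\to C\to A$ and $D\to B\to A$, with $\ell(C)<\ell(B)$. For $v=C$ we have $F^-(C)=DC$, which is a facet, and $F^+(C)=CA$. But $B\notin F^-(C)$, and $P$ is not a pyramid over $DC$.

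So everything after ``Granting this'' rests on a false premise. In any case, the perturbation $\mu=\alpha+\varepsilon\beta$ and the appeal to ``the normal fan and the dimension count'' are hopes rather than arguments. Also, the minimizer of $\alpha$ on a nonempty slice is never empty; the only question is \emph{where} it lies.

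The missing idea is that you do not need to tailor $\mu$ at all. Take $\mu=\alpha$, whose minimum on $P$ is attained exactly on the facet $F^-(v)$. As recalled after Theorem~\ref{thm:facesCH}, every linear functional determines a face of $\CH(P)$: the union of its slice-minimizers is the union of a chain $(F_1,\dots,F_k)$ satisfying (1)--(4).

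Since $F^-(v)$ contains the source and $v$, it meets every slice $\{\ell=c\}$ with $\ell(\hat 0)\le c\le \ell(v)$. On those slices the minimizer is $F^-(v)\cap\{\ell=c\}$, hence $F_1=F^-(v)$. The next face $F_2$ has minimal vertex $v$, so $F_2\subseteq F^+(v)$, which is the single edge $v\to w$ ending at the sink. The chain is therefore forced to be $(F^-(v),v\to w)$, and condition (4) holds automatically. In the quadrilateral above, $\alpha$ is indeed minimized on every slice above $\ell(C)$ at its point on $CA$, even though $P$ is not a pyramid.

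You also leave undone the step showing this face is a facet. It follows because the face is combinatorially equivalent to $\CH(F^-(v))+\CH(v\to w)=\CH(F^-(v))$, which has dimension $\dim P-2=\dim\CH(P)-1$.
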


\begin{proof}
The first claim follows from Lemma \ref{lem:dimsum=dimP}. 

    Let $\mu$ be a linear function such that its minimum value on $P$ is achieved exactly on the face $F^-(v)$.
    Then $\mu$ defines a face of $\CH(P)$ represented by a sequence $(F^-(v),F_2,\dots,F_k) $ for some $F_2,\ldots,F_k$. However, each $F_i$ for $i=2,\dots, k$ is contained in $F^+(v)$, which is one-dimensional and hence the above sequence is $(F^-(v), v\to w)$. 
   Moreover, this face of $\CH(P)$ is combinatorially equivalent to 
    \[
    \CH(F^-(v))+\CH(v\to w)=\CH(F^-(v))
    \]
    which has dimension 
    \[
    \dim F^-(v)-1=\dim P-2=\dim \CH(P)-1.
    \]
    Hence, it is a facet of $\CH(P)$. 
\end{proof}


Our next goal is to give a direct description of the vertices of $\CH(P)$. 
By Theorem~\ref{thm:facesCH}, each vertex is represented by a sequence of directed edges starting at the source and ending at the sink. 
In general, such a path may not determines a vertex of $\CH(P)$, due to condition~(4) of Theorem~\ref{thm:facesCH}. 
However, under Assumption~\ref{ass:equivalent}, the vertices of $\CH(P)$ are in bijection with monotone paths. 
We first establish this correspondence for paths of maximal length.


\begin{lemma}\label{lem:F-F+isaFacet}
    For any vertex $v\in P$, there exists a facet of $\CH(P)$ represented by $(F^-(v), F^+(v))$.
\end{lemma}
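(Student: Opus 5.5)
Throughout we take $v$ distinct from the source and the sink; otherwise one of $F^\pm(v)$ is a point and the sequence degenerates. The plan is to write down an explicit linear function $\mu$ realizing condition~(4) of Theorem~\ref{thm:facesCH} for the sequence $(F^-(v),F^+(v))$, and then to check the dimension. Conditions (1)--(3) of Theorem~\ref{thm:facesCH} hold because the maximum of $\ell$ on $F^-(v)$ and the minimum on $F^+(v)$ are both at $v$, and by Corollaries~\ref{cor:max F^- have source} and~\ref{cor:max F^+ have sink} (using Assumption~\ref{ass:irred}, so that $F^\pm(v)$ are faces) $F^-(v)$ contains the source and $F^+(v)$ contains the sink.

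\textbf{Step 1: the tangent cone at $v$ splits.} Translate so that $v=0$, and let $T$ be the tangent cone of $P$ at $v$, generated by the edge directions at $v$.
\begin{enumerate}
\item Every incoming edge $u\to v$ is a face on which $\ell$ is maximized at $v$, so it lies in the face $F^-(v)$. Similarly, every outgoing edge lies in $F^+(v)$.
\item Hence $T=C^-+C^+$, where $C^\pm$ is the cone of $F^\pm(v)$ at $v$.
\item Since $T$ is full-dimensional in $\operatorname{aff}P$ and $\dim F^-(v)+\dim F^+(v)=\dim P$ by Lemma~\ref{lem:dimsum=dimP}, the linear spans $V^\pm$ of $C^\pm$ are complementary.
\item Therefore $T=C^-\times C^+$ under the direct sum $V^-\oplus V^+$, with projections $\pi^\pm$.
\end{enumerate}
This is the step I expect to need the most care: the complementarity of $V^-$ and $V^+$ is forced only by the dimension count together with the spanning.

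\textbf{Step 2: the functional.} Write $\ell=\ell\circ\pi^-+\ell\circ\pi^+$.
\begin{enumerate}
\item On $C^+\setminus 0$ the function $\ell$ is strictly positive, so $\ell\circ\pi^+\ge 0$ on $T$, hence on $P$.
\item Its zero set on $T$ is $C^-$, so on $P$ it vanishes exactly on the face $F^-(v)$.
\item Symmetrically, $-\ell\circ\pi^-\ge 0$ on $P$ and vanishes exactly on $F^+(v)$.
\end{enumerate}
Set $\mu:=\ell\circ\pi^+-\ell\circ\pi^-$.
\begin{enumerate}
\item On the slice $\{\ell=c\}$ we have $\mu=2\,\ell\circ\pi^+-c=c-2\,\ell\circ\pi^-$.
\item For $c<\ell(v)$, the slice meets $F^-(v)$ because $F^-(v)$ contains the source. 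So $\mu$ is minimized on the slice exactly on the slice of $F^-(v)$.
\item For $c>\ell(v)$, $\mu$ is minimized exactly on the slice of $F^+(v)$.
\item At $c=\ell(v)$ the minimizer is $\{v\}$.
\end{enumerate}
This gives condition~(4), so $(F^-(v),F^+(v))$ is a face of $\CH(P)$.

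\textbf{Step 3: dimension.} By Theorem~\ref{thm:facesCH}, this face is normally equivalent to $\CH(F^-(v))+\CH(F^+(v))$. These two summands lie in translates of the complementary spaces $V^-$ and $V^+$ (more precisely, in the parts of them parallel to $\ker\ell$), so the sum is a product. Its dimension is
\[
\dim F^-(v)-1+\dim F^+(v)-1=\dim P-2=\dim \CH(P)-1,
\]
so the face is a facet.
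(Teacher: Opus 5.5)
Your proof is correct, and it takes a different route from the paper's.

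\textbf{The paper's route.} The paper argues by downward induction along a maximal saturated chain $v_0\to\cdots\to v_d$ through $v$. The base case $v_{d-1}$ is Lemma~\ref{lem:facetdeg1}, where $F^-(v)$ is a facet of $P$ and $\mu$ is simply its supporting functional. In the inductive step it locates the ridge $[F^-(v_{d-i-1}),\,v_{d-i-1}\to v_{d-i},\,F^+(v_{d-i})]$ inside the facet already obtained, and uses that a ridge lies in exactly two facets. It rules out type~(2) facets of Lemma~\ref{lem:facetsofCHPtwotypes} via the segment $\mathrm{Conv}(v_0,v_d)$. Finally it identifies the second facet through Theorem~\ref{thm:facelatticeCH} and a level-set argument.

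\textbf{Your route.} You write down the normal functional directly, using the splitting $T=C^-\times C^+$ of the vertex cone. The paper only exploits this splitting later, in Lemma~\ref{lem:bendface} and in the proof of Proposition~\ref{prop:kernel}.

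\textbf{Comparison.} Your argument is local and non-inductive. It needs neither the facet classification, nor the ridge property, nor Theorem~\ref{thm:facelatticeCH}, and it produces an explicit normal $\mu=\ell\circ\pi^+-\ell\circ\pi^-$. It also obtains the product structure of the facet directly from $V^-\cap V^+=0$, rather than from the dimension count in Lemma~\ref{lem:facetsofCHPtwotypes}. The paper's route, in exchange, avoids coordinates and stays within the combinatorics of $\CH(P)$ it has already built.

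\textbf{A small point to state explicitly.} The claim that $\ell\circ\pi^+$ vanishes on $P$ exactly on $F^-(v)$ uses $P\cap(v+C^-)=F^-(v)$. This holds because a supporting functional of the face $F^-(v)$ is constant on $v+C^-$.

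Your exclusion of the source and sink is appropriate. For those vertices the sequence would give all of $\CH(P)$, not a facet. The paper's induction implicitly makes the same restriction, and Corollary~\ref{cor:facetsofCH} states it.
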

\begin{proof}
    Consider a maximal saturated chain $v_0\to v_1\to \dots\to v_d$ that passes through $v$. We will prove by induction on $i$ that each $(F^-(v_{d-i}),F^+(v_{d-i}))$ corresponds to a facet of $\CH(P)$. The case $i=1$ follows from Lemma \ref{lem:facetdeg1}.

    Assume that $(F^-(v_{d-i}),F^+(v_{d-i}))$ represents a facet, which is combinatorially equivalent to 
    \[\CH(F^-(v_{d-i}))+\CH(F^+(v_{d-i}))\simeq \CH(F^-(v_{d-i}))\times\CH(F^+(v_{d-i})).\]
    Within this facet there is a codimension two face $G$ of $\CH(P)$, which is combinatorially equivalent to
\[
\CH(F^-(v_{d-i-1}))\times \CH(v_{d-i-1}\to v_{d-i})\times \CH(F^+(v_{d-i})).
\]
In other words, $G$ is determined by the sequence $(F^-(v_{d-i-1}), v_{d-i-1}\to v_{d-i}, F^+(v_{d-i}))$. 

As a  codimension two face of $\CH(P)$, $G$ must be the intersection of two facets of $\CH(P)$, one of which is $(F^-(v_{d-i}),F^+(v_{d-i}))$. 
By Theorem \ref{thm:facelatticeCH}, the other facet containing $G$ is not of type (2) in Lemma \ref{lem:facetsofCHPtwotypes}. In fact, any face of $P$ containing both the source $v_0$ and the sink $v_d$ contains the line segment $\mathrm{Conv}(v_0, v_d)$ (not necessarily an edge of $P$), but 
\[
F^-(v_{d-i-1})\cup  (v_{d-i-1}\to v_{d-i})\cup  F^+(v_{d-i})
\]
does not contain the line segment $\mathrm{Conv}(v_0, v_d)$. 

Thus, the other facet containing $G$ must be determined by $(F^-(w),F^+(w))$ for some vertex $w$ of $P$. By Theorem \ref{thm:facelatticeCH}, the vertex $w$ satisfies
\[
F^-(v_{d-i-1})\cup  (v_{d-i-1}\to v_{d-i})\cup  F^+(v_{d-i})\subset F^-(w)\cap F^+(w). 
\]
Restricting the above inclusion to the level set $\ell=\ell(w)$, it follows that $w=v_{d-i-1}$ or $v_{d-i}$. Thus, the other facet containing $G$ is determined by $(F^-(v_{d-i-1}), F^+(v_{d-i-1}))$. In particular, the two-term sequence $(F^-(v_{d-i-1}), F^+(v_{d-i-1}))$ corresponds to a facet of $\CH(P)$. 
\end{proof}
\begin{corollary}\label{cor:facetsofCH}
    The facets of $\CH(P)$ are in bijection with 
    \begin{enumerate}
        \item single-term sequences $(F)$ where $F$ is a facet of $P$ containing both source and sink, and
        \item two-term sequences $(F^-(v), F^+(v))$ where $v$ is a vertex of $P$ that is neither the source nor the sink. 
    \end{enumerate}
\end{corollary}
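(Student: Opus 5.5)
The bijection is assembled from Lemma~\ref{lem:facetsofCHPtwotypes}, Lemma~\ref{lem:F-F+isaFacet} and Theorem~\ref{thm:facesCH}. Theorem~\ref{thm:facesCH} says faces of $\CH(P)$ correspond bijectively to certain sequences of faces of $P$. It therefore suffices to show that the set of sequences representing facets is exactly the union of the two listed families. Distinct sequences are distinct faces, and the two families are disjoint: type (1) sequences have one term, type (2) have two. So the map sequence $\mapsto$ face is automatically injective.

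\emph{Every facet lies in one of the families.} This is Lemma~\ref{lem:facetsofCHPtwotypes} directly: a facet is either a single facet $F$ of $P$ containing source and sink, or $(F^-(v),F^+(v))$ for some $v$ other than the source and sink.

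\emph{Every member of family (2) is a facet.} Lemma~\ref{lem:F-F+isaFacet} gives, for each vertex $v$, a facet of $\CH(P)$ represented by $(F^-(v),F^+(v))$. We restrict to $v$ not the source or sink, since then both terms are positive dimensional and the sequence is genuinely two-term.

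\emph{Every member of family (1) is a facet.} This is the step needing an argument. Let $F$ be a facet of $P$ containing source and sink, and choose a linear $\mu$ minimized on $P$ exactly on $F$. Every cut $P\cap\{\ell=c\}$ with $c$ strictly between the extreme values meets $F$, because $F$ contains points at both extreme levels and is convex. So $\mu$ is minimized on that cut exactly at $F\cap\{\ell=c\}$, and condition (4) of Theorem~\ref{thm:facesCH} holds for the one-term sequence $(F)$. Conditions (1)--(3) are immediate, so $(F)$ is a face of $\CH(P)$, normally equivalent to $\CH(F)$. Its dimension is $\dim F-1=\dim P-2=\dim\CH(P)-1$, using that $F$ is positive dimensional because it contains both source and sink. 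Hence it is a facet.

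The only delicate point is that the minimizing locus of $\mu$ on every cut is exactly $F\cap\{\ell=c\}$. This follows because $\mu$ is constant on $F$ and strictly larger off $F$. Everything else is bookkeeping with the cited results.
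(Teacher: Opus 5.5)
Your proof is correct and follows the same route as the paper. Lemma~\ref{lem:facetsofCHPtwotypes} gives one direction, Lemma~\ref{lem:F-F+isaFacet} handles the two-term sequences, and a linear functional supporting $F$ verifies condition (4) of Theorem~\ref{thm:facesCH} for the one-term sequences; your checks that every level set meets $F$ and that the face has dimension $\dim F-1=\dim\CH(P)-1$ spell out details the paper leaves implicit.
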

\begin{proof}
    By Lemma \ref{lem:facetsofCHPtwotypes}, we only need to show that such sequences always give rise to facets of $\CH(P)$. For the two-term sequences $(F^-(v), F^+(v))$, it follows from Lemma \ref{lem:F-F+isaFacet}. For the one-term sequences $(F)$, let $\mu$ be the linear function supporting $F$. Then $\mu$ justifies the last condition of Theorem \ref{thm:facesCH}. Hence, $(F)$ corresponds to a facet of $\CH(P)$.
\end{proof}
More generally, we prove that under Assumption \ref{ass:equivalent}, the last condition of Theorem \ref{thm:facesCH} is not needed.
\begin{theorem}\label{thm:facesofCH}
    Assume that $P$ satisfies Assumption \ref{ass:equivalent}. The faces of $\CH(P)$ are in bijection with sequences $(F_1,\dots, F_k)$ of faces of $P$, where the maximal vertex of $F_{i-1}$ is the minimal vertex of $F_i$ for $i=2,\dots, k$, the minimal vertex of $F_1$ is the source of $P$ and the maximal vertex of $F_k$ is the sink of $P$. The dimension of such a face of $\CH(P)$ equals 
    \[
    \dim F_1+\cdots +\dim F_k-k.
    \]
\end{theorem}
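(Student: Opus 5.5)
By Theorem~\ref{thm:facesCH}, every face of $\CH(P)$ comes from a chain $(F_1,\dots,F_k)$ satisfying conditions (1)--(3), so the work is to show that every such chain also satisfies condition (4), and to compute the dimension. I would write faces as intersections of facets and use the explicit list of facets in Corollary~\ref{cor:facetsofCH}.

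For condition (4), fix a chain $(F_1,\dots,F_k)$ and let $v_1,\dots,v_{k-1}$ be the breakpoints, i.e.\ the maximal vertex of $F_i$ for $i<k$.
\begin{enumerate}
\item By Corollary~\ref{cor:facetsofCH} and Lemma~\ref{lem:F-F+isaFacet}, each pair $(F^-(v_i),F^+(v_i))$ is a facet of $\CH(P)$.
\item Intersect these $k-1$ facets using Theorem~\ref{thm:facelatticeCH}. The union $\bigcap_i\bigl(F^-(v_i)\cup F^+(v_i)\bigr)$ should equal $\bigcup_i\bigl(F^+(v_{i-1})\cap F^-(v_i)\bigr)$, with $v_0$ the source and $v_k$ the sink. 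The reason is that on each level set $\ell=c$ with $\ell(v_{i-1})<c<\ell(v_i)$, a point of $F^-(v_j)$ with $j\le i-1$ has $\ell\le\ell(v_j)$, so this level is excluded, which forces the cuts into the right pieces.
\item The sequence $G_i=F^+(v_{i-1})\cap F^-(v_i)$ is a face of $P$, since $\OO(v_{i-1},v_i)$ holds (we have $\OO=\CCC$). It is a chain satisfying (1) of Theorem~\ref{thm:facesCH}, so it gives a face $\Phi$ of $\CH(P)$.
\item Each $F_i\subseteq G_i$. Within $\CH(G_1)\times\cdots$, the face for $(F_1,\dots,F_k)$ should then be obtained by intersecting $\Phi$ with facets of type (1) of Corollary~\ref{cor:facetsofCH}, applied inside each $G_i$. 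Concretely, $F_i$ is an intersection of facets of $G_i$ containing the source and sink of $G_i$. Each such facet $H$ of $G_i$ corresponds to the facet of $\CH(P)$ given by a facet of $P$ containing $H$ and all the other $G_j$.
\end{enumerate}

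The cleaner route for step 4 is a direct construction of $\mu$. Take $\mu=\sum_i\mu_i$ with $\mu_i$ supporting $F_i$ inside $G_i$, after first choosing $\mu_0$ realizing the chain $(G_1,\dots,G_k)$. Then use $\mu_0+\epsilon\mu'$ for small $\epsilon$, where $\mu'$ is a linear function on $P$ whose minimum on $G_i$ is exactly $F_i$. Such a $\mu'$ exists if each $F_i$ is cut out from $G_i$ by facets of $P$ that contain the other $G_j$. I expect this compatibility to be the main obstacle. I would attack it using Assumption~\ref{ass:irred} together with Corollary~\ref{cor: containing chains=containing faces}: a facet of $P$ containing $F_i$ and a maximal saturated chain through the whole sequence contains the corresponding faces $G_j$. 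Alternatively, I would argue by induction on $k$ via the product structure of the facet $(F^-(v_1),F^+(v_1))\cong\CH(F^-(v_1))\times\CH(F^+(v_1))$ from Lemma~\ref{lem:facetsofCHPtwotypes}, which reduces to $F^-(v_1)$ and $F^+(v_1)$. Those are again polytopes satisfying Assumption~\ref{ass:equivalent} by Proposition~\ref{prop:constructexamples}(3). The induction works because faces of a product are products of faces, and because the source and sink of $F^-(v_1)$ are the source of $P$ and $v_1$ (Corollary~\ref{cor:max F^- have source}). This inductive route seems most robust. The base case $k=1$ needs that $F_1$ containing the source and sink is a face of $\CH(P)$ of type $(F_1)$, which follows by taking $\mu$ to be a supporting function of $F_1$.

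For the dimension, Theorem~\ref{thm:facesCH} says the face is normally equivalent to $\sum\CH(F_i)$. In the inductive product description it is actually $\prod\CH(F_i)$, so its dimension is $\sum(\dim F_i-1)=\sum\dim F_i-k$. This agrees with the bound in Lemma~\ref{lem:dimatmostP}.
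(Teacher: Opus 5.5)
Your final inductive route is the paper's proof: induction on $k$, the base case from a linear functional supporting $F_1$, and the inductive step. That step realizes $(F_1,\dots,F_k)$ as a product face inside the facet $(F^-(v_1),F^+(v_1))\cong\CH(F^-(v_1))\times\CH(F^+(v_1))$ given by Lemma~\ref{lem:F-F+isaFacet} and Lemma~\ref{lem:facetsofCHPtwotypes}, which also gives the dimension formula. The facet-intersection and perturbed-$\mu$ routes you sketch first are not needed, and their compatibility step is left unresolved, so present only the inductive argument.
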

\begin{proof}
     It is enough to prove that every such chain corresponds to a face of the given dimension.
     We proceed by induction on $k$. When $k=1$, we need the linear function $\mu$ satisfies condition (4) of Theorem \ref{thm:facesCH}.   For this we can choose $\mu$ such that the minimum value of $\mu$ on $P$ is achieved on the face $F_1$. 

    Now, suppose $k>1$. Let $v$ be the vertex of $F_1$ on which $\ell$ attains maximum. By inductive hypothesis, the single-term sequence $(F_1)$ determines a face of $\CH(F^-(v))$, and the sequence $(F_2, \dots, F_k)$ determines a face of $\CH(F^+(v))$. By Lemma~\ref{lem:F-F+isaFacet}, the sequence $(F^-(v), F^+(v))$ determines a facet $G$ of $\CH(P)$, which by Lemma~\ref{lem:facetsofCHPtwotypes} is combinatorially equivalent to $\CH(F^-(v))\times \CH(F^+(v))$.
    
    The face of $\CH(F^-(v))$ determined by $(F_1)$ together with the face of $\CH(F^+(v))$ determined by $(F_2, \dots, F_k)$ gives rise to a face of $G$, and hence a face of $\CH(P)$. Obviously, such face of $\CH(P)$ is determined by the sequence $(F_1, \dots, F_k)$.    
%
\end{proof}

\begin{definition}
    We call a sequence of faces $(F_1, \ldots, F_k)$ as in Theorem \ref{thm:facesofCH} a \emph{monotone chain of faces} in $P$. We denote the corresponding face of $\CH(P)$ by $[F_1, \ldots, F_k]$. When all $F_i$ are edges, $(F_1, \ldots, F_k)$ is a maximal chain of directed edges $v_0\to \cdots \to v_k$, which we also call \emph{a monotone path}. In this case, we also denote the corresponding vertex in $\CH(P)$ by $[v_0\to \cdots \to v_k]$. 
\end{definition}

As a direct corollary, we obtain the following description of vertices and edges of $\CH(P)$.
\begin{corollary}\label{cor:vertofCHP}
    Vertices of $\CH(P)$ are in bijection with monotone paths (not necessarily saturated) in $P$ going from the source to the sink.
\end{corollary}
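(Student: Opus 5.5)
This is the $0$-dimensional case of Theorem~\ref{thm:facesofCH}, applied under Assumption~\ref{ass:equivalent}, so the plan is simply to specialize that bijection.

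First, I take a vertex of $\CH(P)$. By Theorem~\ref{thm:facesofCH} it corresponds to a monotone chain of faces $(F_1,\dots,F_k)$ with
\[
\dim F_1+\cdots+\dim F_k-k=0.
\]
The minimal vertex of $F_i$ is the maximal vertex of $F_{i-1}$. Since $\ell$ is strictly increasing along the chain from the source to the sink, I will assume the source and the sink are distinct, i.e.\ $\dim P>0$; the point case is trivial. Consecutive faces are then glued at distinct minimal and maximal vertices, so no $F_i$ can be a single point, because its minimal and maximal vertex would coincide. Hence each $F_i$ has dimension at least $1$. The dimension formula then forces $\dim F_i=1$ for every $i$, so every $F_i$ is an edge of $P$.

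Next, I read off the path. Each edge $F_i$ is oriented from its minimal to its maximal vertex, so the sequence becomes a chain of directed edges $v_0\to v_1\to\cdots\to v_k$. Here $v_0$ is the source, $v_k$ is the sink, and $F_i=\mathrm{conv}(v_{i-1},v_i)$. This is exactly a monotone path from the source to the sink. Nothing requires the path to be saturated: Theorem~\ref{thm:facesofCH} imposes only the gluing, source and sink conditions.

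Conversely, any monotone path $v_0\to\cdots\to v_k$ from the source to the sink is a monotone chain of faces consisting of edges. By Theorem~\ref{thm:facesofCH} it determines a face of $\CH(P)$ of dimension $k\cdot 1-k=0$, that is, a vertex. The two assignments are mutually inverse because Theorem~\ref{thm:facesofCH} is itself a bijection.

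The only point needing care is excluding degenerate point faces in the chain, which is handled by the strict monotonicity above. Beyond that, the corollary follows directly from Theorem~\ref{thm:facesofCH}.
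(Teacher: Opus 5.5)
Your proposal is correct and matches the paper, which states this corollary as a direct consequence of Theorem~\ref{thm:facesofCH}: you specialize to $0$-dimensional faces, where the dimension formula forces every $F_i$ to be an edge. One small caveat is that ruling out single-point $F_i$ does not follow from the gluing conditions as literally written; it is the implicit convention of Theorem~\ref{thm:facesofCH}, inherited from Billera--Sturmfels cellular strings, where $\ell(\min F_i)<\ell(\max F_i)$, and your appeal to strict monotonicity is precisely the intended reading.
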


\begin{corollary}\label{cor:edgesofCHP}
Every edge of $\CH(P)$ corresponds to a monotone chain of faces $(F_1,\dots, F_k)$, where all but one of the $F_i$ are edges and the remaining face has dimension two. The two vertices of such an edge of $\CH(P)$ correspond to monotone paths in $F_1\cup \cdots \cup F_k$ that differ only within the two-dimensional face.
\end{corollary}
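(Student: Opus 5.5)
The plan is to read this off Theorem~\ref{thm:facesofCH}, specialising its dimension formula to dimension one, and then use Theorem~\ref{thm:facelatticeCH} to identify the two endpoints of the edge.

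\emph{Step 1: shape of the chain.} By Theorem~\ref{thm:facesofCH}, an edge of $\CH(P)$ is $[F_1,\dots,F_k]$ for a monotone chain of faces with
\[
\sum_{i=1}^k (\dim F_i - 1) = 1 .
\]
Each $F_i$ joins a minimal vertex to a distinct maximal vertex, so $\dim F_i\geq 1$ and every summand is nonnegative. Hence exactly one face $F_j$ has dimension two, and all other $F_i$ are edges.

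\emph{Step 2: the vertices lying on the edge.} By Corollary~\ref{cor:vertofCHP}, the vertices of $\CH(P)$ correspond to monotone paths. By the inclusion criterion of Theorem~\ref{thm:facelatticeCH}, the vertex $[p]$ lies on $[F_1,\dots,F_k]$ exactly when the path $p$ is contained in $U=F_1\cup\cdots\cup F_k$. Fix $i$ and look at the level $\ell=c$ for $c$ strictly between the values of $\ell$ at the endpoints of $F_i$. In that range, $U$ meets the level set only inside $F_i$. So a monotone path contained in $U$ must traverse each $F_i$ from its minimal to its maximal vertex while staying inside $F_i$.

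\emph{Step 3: conclusion.} For $i\neq j$, the face $F_i$ is an edge, so the path is forced to follow it. Inside the polygon $F_j$, a monotone path staying in $F_j$ must run along its boundary. There are exactly two such paths, the left and right boundary chains from its source to its sink. These give the two endpoints of the edge, and the two corresponding paths agree outside $F_j$.

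The only point needing care is the claim in Step 2 that a path in $U$ cannot switch between different $F_i$ except at their shared endpoints. This follows from the strict monotonicity of $\ell$ along the chain: the open $\ell$-intervals of distinct $F_i$ are disjoint. For the polygon itself, we do not even need Lemma~\ref{lem:2-face}: any monotone edge path in a polygon from its source to its sink is one of the two boundary arcs.
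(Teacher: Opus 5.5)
Your proof is correct and follows the route the paper has in mind; the paper gives no separate proof and simply calls this a direct corollary of Theorem~\ref{thm:facesofCH}. As you do, the dimension formula forces exactly one $2$-face with all other $F_i$ edges, and the containment criterion of Theorem~\ref{thm:facelatticeCH} together with Corollary~\ref{cor:vertofCHP} identifies the two endpoints as the two boundary arcs of that polygon. Your level-set argument just makes explicit the detail the paper leaves implicit.
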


We also obtain a concrete description of the edges adjacent to a given vertex of $\CH(P)$. The following corollary is a straightforward consequence of Corollary \ref{cor:edgesofCHP} and Lemma \ref{lem:2-face}. 
\begin{corollary}\label{cor:edgesadjacenttovertexofCHP}
  Consider a vertex of $\CH(P)$ corresponding to a monotone path $v_0\to \dots\to v_k$. 
  Each edge of $\CH(P)$  adjacent to the given vertex corresponds to:
  \begin{enumerate}
      \item a triangle in $P$ with vertices $\{v_i,x, v_{i+1}\}$, where $v_i\to x$ and $x\to v_{i+1}$ are directed edges,
      \item a 2-face containing vertices $v_i, v_{i+1}, v_{i+2}$, and contained in $F^+(v_i)\cap F^-(v_{i+2})$.
  \end{enumerate}
  Moreover, the 2-face in (2) is either the triangle $(v_i, v_{i+1}, v_{i+2})$ or a quadrilateral with vertices $\{v_i, v_{i+1}, v_{i+2}, x\}$ such that $v_i\to x$ and $x\to v_{i+2}$ are directed edges.
\end{corollary}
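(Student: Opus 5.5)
We combine Corollary~\ref{cor:edgesofCHP} with Lemma~\ref{lem:2-face} and the graded structure from Corollary~\ref{cor:gradthen graded}. By Corollary~\ref{cor:edgesofCHP}, an edge of $\CH(P)$ at the vertex $[v_0\to\cdots\to v_k]$ corresponds to a monotone chain of faces $(F_1,\dots,F_m)$. All the $F_j$ are edges except one, say $F_j=Q$, which is a $2$-face. The path $v_0\to\cdots\to v_k$ lies in the union, and the two monotone paths agree outside $Q$. Consequently $Q$ has source $v_i$ and sink $v_{i'}$ for some $i<i'$, and the segment $v_i\to\cdots\to v_{i'}$ of the path is a monotone path inside $Q$ from its source to its sink. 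Since $Q$ is a polygon and $\ell$ is generic, $Q$ has exactly two monotone boundary paths from source to sink. One of them is $v_i\to\cdots\to v_{i'}$, and the other gives the adjacent vertex of $\CH(P)$.

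Next we bound the size of $Q$. By Proposition~\ref{prop:constructexamples}(3), Assumption~\ref{ass:equivalent} holds for $Q$, so Lemma~\ref{lem:2-face} and Example~\ref{exm:triandquad} apply. Hence $Q$ is either a triangle, or a quadrilateral in which the source and sink are not joined by an edge. In the quadrilateral case, each of the two monotone boundary paths has length exactly $2$; this is also forced by the grading, since $\dim Q=2$. In the triangle case, the two boundary paths have lengths $1$ and $2$.

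We then split into cases according to the length $i'-i$ of the segment of our path inside $Q$.

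\begin{itemize}
\item If $i'-i=1$, then $Q$ cannot be a quadrilateral, because there the source and sink are not adjacent. So $Q$ is a triangle $\{v_i,x,v_{i+1}\}$ with $v_i\to x\to v_{i+1}$. This is case (1).
\item If $i'-i=2$, then $Q$ contains $v_i,v_{i+1},v_{i+2}$. Either $Q$ is the triangle on these three vertices, or $Q$ is a quadrilateral whose other boundary path is $v_i\to x\to v_{i+2}$. Since $\ell$ attains its minimum on $Q$ at $v_i$ and its maximum at $v_{i+2}$, we have $Q\subset F^+(v_i)\cap F^-(v_{i+2})$ by the definitions. This gives case (2) and the final ``moreover'' statement.
\item The case $i'-i\ge 3$ cannot occur. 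Every monotone path in a polygon from its source to its sink has length at most $2$, because $Q$ has at most $4$ vertices and the quadrilateral case forbids the source--sink edge.
\end{itemize}

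Conversely, every configuration listed in (1) or (2) does give an edge of $\CH(P)$. To see this, replace the relevant one or two path edges by $Q$, which yields the monotone chain of faces
\[
(v_0v_1,\dots,Q,\dots,v_{k-1}v_k).
\]
By Theorem~\ref{thm:facesofCH}, this chain is a face of $\CH(P)$ of dimension $\sum_j \dim F_j-m=1$. By Theorem~\ref{thm:facelatticeCH}, this edge contains the given vertex, since the path lies inside the union of the chain.

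The only real subtlety is making sure that the segment of the path inside $Q$ is a source-to-sink boundary path of $Q$, with no extra path vertices lying in $Q$. This follows from condition (1) of Theorem~\ref{thm:facesCH}, which makes consecutive faces in the chain meet exactly at the extremal vertices.
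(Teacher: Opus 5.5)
Your proposal is correct and follows the paper's route. The paper justifies this corollary in one line, as a consequence of Corollary~\ref{cor:edgesofCHP} and Lemma~\ref{lem:2-face}, and your case split on the length of the path segment inside the $2$-face $Q$, using the quadrilateral criterion of Example~\ref{exm:triandquad}, is exactly the unpacking of that line. Your converse (each listed configuration gives an edge via Theorem~\ref{thm:facesofCH}) goes beyond what is stated, but it is correct, and it is what the edge count in the proof of Theorem~\ref{thm:simpleCH} implicitly relies on.
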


\begin{corollary}\label{cor:numberoftrangles}
    Let $v$ and $w$ be the source and sink of $P$, respectively. If there is a directed edge $v\to w$, then the number of triangle faces $(v, w, x)$ of $P$ is at least $\dim P-1$. 
\end{corollary}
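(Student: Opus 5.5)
We look at the vertex of $\CH(P)$ given by the one-edge monotone path $[v\to w]$, where $v$ is the source and $w$ the sink. By Theorem~\ref{thm:facesofCH} we have $\dim \CH(P)=\dim P-1$. Since $\CH(P)$ is a polytope of that dimension, each of its vertices lies on at least $\dim P-1$ edges. So it suffices to show that every edge of $\CH(P)$ at $[v\to w]$ comes from a triangle $(v,w,x)$ of $P$, with different edges giving different triangles.

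We use Corollary~\ref{cor:edgesadjacenttovertexofCHP} with the path $v_0\to v_1$, where $v_0=v$ and $v_1=w$, so $k=1$. Case (2) of that corollary needs three consecutive vertices $v_i,v_{i+1},v_{i+2}$ on the path, and a path with only two vertices has none. Hence every edge at $[v\to w]$ is of type (1). Such an edge is a triangle of $P$ with vertices $\{v,x,w\}$ and directed edges $v\to x$ and $x\to w$.

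To check the type-(1) description directly, we use Corollary~\ref{cor:edgesofCHP}. An edge of $\CH(P)$ at this vertex is a monotone chain $(F_1,\dots,F_k)$ in which one face is a $2$-face and all others are edges. The union of these faces contains the path $v\to w$. The chain runs from the source to the sink, and consecutive faces meet at a maximal/minimal vertex, so the levels of $\ell$ cover $[\ell(v),\ell(w)]$ without overlap. The path $v\to w$ covers this whole range inside one face, so that face must be the whole chain, which means $k=1$ and $F_1$ is a $2$-face $T$. Then $T$ has source $v$ and sink $w$ and contains the edge $v\to w$. By Example~\ref{exm:triandquad}, a quadrilateral with an edge from source to sink violates the stratification conditions, and by Corollary~\ref{cor:stratonface} those conditions pass to the face $T$. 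Lemma~\ref{lem:2-face} excludes $n$-gons with $n\ge 5$. So $T$ is a triangle $(v,w,x)$.

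Distinct edges of $\CH(P)$ correspond to distinct monotone chains, so distinct edges give distinct triangles. This yields at least $\dim P-1$ triangles, as claimed. The one step that needs care is excluding quadrilateral $2$-faces through the directed edge $v\to w$. The argument above handles it by restricting the stratification property to the face $T$, so I expect no real obstacle.
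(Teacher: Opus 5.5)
Your proposal is correct and takes essentially the same route as the paper. You take the vertex $[v\to w]$ of $\CH(P)$, use Corollary~\ref{cor:edgesadjacenttovertexofCHP} to see that a one-edge path admits only type~(1) edges (triangles $(v,x,w)$), and use that every vertex of the $(\dim P-1)$-dimensional polytope $\CH(P)$ has at least $\dim P-1$ incident edges. Your extra direct check via Corollary~\ref{cor:edgesofCHP} and the injectivity remark are redundant but sound, and, like the paper, you are implicitly working under Assumption~\ref{ass:equivalent}, which is in force throughout that section.
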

\begin{proof}
    By Corollary \ref{cor:vertofCHP}, the edge $v\to w$ corresponds to a vertex of $\CH_\ell(P)$, which we denote by $v'$. By Corollary~\ref{cor:edgesadjacenttovertexofCHP}, all edges of $\CH_\ell(P)$ connecting to $v'$ correspond to triangle faces $(v, w, x)$ of $P$. Since $\dim \CH_\ell(P)=\dim P-1$, the number of such triangle faces must be at least $\dim P-1$.
\end{proof}

\begin{remark}
   For an arbitrary polytope $P$ of dimension $n$ with a generic linear function $\ell$ its graph of $\ell$-monotone paths $G(P,\ell)$ is a graph defined via
   \begin{itemize}
       \item Vertices of  $G(P,\ell)$ are monotone paths connecting the source to the sink.
       \item Edges are chains of faces $(F_1,\dots, F_k)$, where all but one of the $F_i$ are edges and the remaining face has dimension two.
   \end{itemize}
  By Theorems~\ref{thm:facesCH} and~\ref{thm:facelatticeCH} and some mild assumptions of $P$ and $\ell$, we get that $G(P,\ell)$ is a full subgraph of the edge graph of monotone path polytope $\CH(P)$, which is $(n-1)$-connected by Balinski's theorem. Motivated by this observation Reiner conjectured that the graph $G(P,\ell)$ is always $(n-1)$-connected \cite[Conjecture 15]{reiner1999generalized}. Quickly after, in \cite{athanasiadis2000monotone} Reiner's conjecture was proved for simple polytopes $P$ but disproved in general.

  Our Theorem~\ref{thm:facesofCH} and Corollaries~\ref{cor:vertofCHP} and~\ref{cor:edgesofCHP} guarantee that for pairs $(P,\ell)$ satisfying Assumption~\ref{ass:equivalent} the graph of monotone paths coincides with the edge graph of Chow quotients, thus provides another class of polytopes for which Reiner's conjecture holds.
\end{remark}

\begin{lemma}\label{lem:bendface}
    Let $F_1$ and $F_2$ be two faces of $P$ such that the maximal vertex of $F_1$ is equal to the minimal vertex of $F_2$. Then there exists a unique face $F$ containing both $F_1$ and $F_2$ and satisfying $\dim F=\dim F_1+\dim F_2$. Moreover, the minimal vertex of $F$ is equal to the minimal vertex of $F_1$ and the maximal vertex of $F$ is equal to the maximal vertex of $F_2$. 
\end{lemma}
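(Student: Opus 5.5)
Throughout we work under Assumption~\ref{ass:equivalent}. Let $a$ be the minimal vertex of $F_1$, $v$ the common vertex (maximal on $F_1$, minimal on $F_2$), and $b$ the maximal vertex of $F_2$.

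\emph{Existence.} Since $\ell$ attains its maximum on $F_1$ at $v$, we have $F_1\subseteq F^-(v)$. Similarly $F_2\subseteq F^+(v)$. Choose a saturated chain in $F_1$ from $a$ to $v$. By Corollary~\ref{cor: containing chains=containing faces} (applied inside $F_1$, which satisfies the assumption by Proposition~\ref{prop:constructexamples}(3)) it has length $\dim F_1$. Likewise choose a saturated chain in $F_2$ from $v$ to $b$, of length $\dim F_2$. Their concatenation $s$ is a directed chain $a\to\cdots\to v\to\cdots\to b$.

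By Theorem~\ref{thm:equivalences}(5) we have $\OO=\CCC$, so $\OO(a,b)$ holds. Hence $G:=F^+(a)\cap F^-(b)$ is a face (Assumption~\ref{ass:irred}) with minimum $a$ and maximum $b$. It contains $s$, since by Corollary~\ref{cor:gradthen graded} every vertex between $a$ and $b$ in $\CCC$ lies in $F^+(a)\cap F^-(b)$.

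Inside $G$, the chain $s$ need not be saturated. I would therefore apply Lemma~\ref{lem:face containing a chain} and its $O^+$-analogue restricted to suitable faces to shrink $G$. Concretely, I first take the face $H$ of $F^-(b)\cap F^+(a)$ containing $F_1\cup F_2$ that is minimal, i.e.\ the smallest face of $P$ containing $F_1\cup F_2$. Call it $F$; it is a face of $G$.

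\emph{Dimension count.} The inequality $\dim F\ge \dim F_1+\dim F_2$ should follow from grading. Consider the face $F$ with source $a$ and sink $b$. Apply Lemma~\ref{lem:dimsum=dimP} to $F$ at the vertex $v$: $\dim F^-_F(v)+\dim F^+_F(v)=\dim F$. Since $F_1\subseteq F^-_F(v)$ and $F_2\subseteq F^+_F(v)$, we get $\dim F\ge\dim F_1+\dim F_2$.

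For the reverse inequality I would show $F^-_F(v)=F_1$ and $F^+_F(v)=F_2$. This is the main obstacle: the smallest face containing $F_1$ and $F_2$ might have a strictly bigger $F^-_F(v)$. My first attempt is to use Theorem~\ref{thm:facesofCH}. The pair $(F_1,F_2)$ is a monotone chain of faces in $F^+(a)\cap F^-(b)$, hence defines a face of $\CH(G)$ of dimension $\dim F_1+\dim F_2-2$. By condition (4) of Theorem~\ref{thm:facesCH}, there is a linear $\mu$ whose minimizers on slices of $G$ are exactly the slices of $F_1$ and $F_2$. The minimum face of $\mu$ on $G$ is then a face $F$ whose slice in each level of $\ell$ contains the slices of $F_1$ or $F_2$. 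Extending $\mu$ by a multiple of $\ell$, the face $F$ of $G$ minimizing $\mu+\lambda\ell$ for suitable $\lambda$ should contain both $F_1$ and $F_2$, with $F^-_F(v)=F_1$ and $F^+_F(v)=F_2$ (the slices near $v$ force this). This gives $\dim F=\dim F_1+\dim F_2$ by Lemma~\ref{lem:dimsum=dimP} applied to $F$.

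\emph{Uniqueness and extremal vertices.} Any face $F'\supseteq F_1\cup F_2$ of dimension $\dim F_1+\dim F_2$ contains the chain $s$ of length $\dim F'$. By Lemma~\ref{lem:gradedposet} applied to $F'$, this chain is a maximal saturated chain of $F'$, so $a$ and $b$ are the source and sink of $F'$. Both $F$ and $F'$ contain $s$, which is a maximal saturated chain of each. The second part of Corollary~\ref{cor: containing chains=containing faces} then gives $F\subseteq F'$ and $F'\subseteq F$, so $F=F'$. The same maximality shows that the minimal vertex of $F$ is $a$ and the maximal vertex is $b$, as claimed.
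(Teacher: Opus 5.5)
\textbf{There is a genuine gap: the upper bound on $\dim F$, which is the substance of the lemma, is never proved, and the route you sketch for it cannot work.}

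Several parts of your proposal are correct. The lower bound $\dim F\ge\dim F_1+\dim F_2$ for the smallest face $F$ containing $F_1\cup F_2$, obtained from Lemma~\ref{lem:dimsum=dimP} applied to $F$ at $v$, is fine. Your uniqueness and extremal-vertex arguments are also fine once existence is known; uniqueness is even immediate, since every face containing $F_1\cup F_2$ contains $F$.

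The missing piece is the reverse inequality, equivalently $F\cap F^-(v)=F_1$. Your approach through $\CH(G)$ fails, not merely for lack of detail. Adding $\lambda\ell$ to $\mu$ does not change the minimizers of $\mu$ on the slices $G\cap\{\ell=c\}$. So suppose $\mu+\lambda\ell$ were minimized on $G$ exactly on a face $F\supseteq F_1\cup F_2$. Then the minimizer of $\mu$ on the slice at level $\ell(v)$ would be $F\cap\{\ell=\ell(v)\}$.

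When $F_1$ and $F_2$ are both positive dimensional (otherwise the lemma is trivial), this set strictly contains $\{v\}$. Indeed, a segment from a point of $F_1\setminus\{v\}$ to a point of $F_2\setminus\{v\}$ crosses that level at a point of $F$. That point cannot be $v$, because $v$ is a vertex of $P$. This contradicts condition (4) for $(F_1,F_2)$, which forces the minimizer at that level to be $\{v\}$. Hence no $\lambda$ works. Your preceding sentence about the minimum face of $\mu$ itself fails for the same reason. The face $[F_1,F_2]$ of $\CH(G)$ records a genuinely ``bent'' configuration and says nothing about the face generated by $F_1$ and $F_2$.

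\textbf{The missing idea is the local product structure at $v$, which is what the paper uses.}
\begin{enumerate}
\item Every edge at $v$ is either incoming or outgoing, so it lies in $F^-(v)$ or in $F^+(v)$. Hence the tangent cone $T_vP=\mathrm{cone}(P-v)$ is generated by $T_vF^-(v)\cup T_vF^+(v)$.
\item By Lemma~\ref{lem:dimsum=dimP}, $\dim F^-(v)+\dim F^+(v)=\dim P$. So the linear spans of these two cones are complementary, and $T_vP=T_vF^-(v)\oplus T_vF^+(v)$.
\item Faces of a direct sum of pointed cones are exactly sums of faces. Therefore $T_vF_1\oplus T_vF_2$ is a face of $T_vP$.
\item The corresponding face $F$ of $P$ contains $F_1$ and $F_2$ and has dimension $\dim F_1+\dim F_2$.
\item It also satisfies $T_v(F\cap F^-(v))=T_vF_1$, that is, $F\cap F^-(v)=F_1$. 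This is exactly the statement you identified as the obstacle.
\end{enumerate}
With this in hand, your chain argument finishes the claims about the minimal and maximal vertices. Alternatively, as in the paper, note that $F_1,F_2\subseteq F^+(a)\cap F^-(b)$, so the face they generate also lies there.
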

\begin{proof}
    Let $v$ be the maximal vertex of $F_1$, which is also the minimal vertex of $F_2$. By Lemma \ref{lem:dimsum=dimP}, $\dim F^-(v)+\dim F^+(v)=\dim P$. Therefore, the cone of $F^-(v)$ at $v$ and the cone of $F^+(v)$ at $v$ have complimentary dimensions in the cone of $P$ at $v$. Moreover, the latter is equal to the convex hull of the former two cones. Thus, $F^-(v)-v$ and $F^+(v)-v$ lie in skew subspaces of $P-v$. Hence, the face $F_1$ in $F^-(v)$ and the face $F_2$ in $F^+(v)$ generate a face $F$ whose cone at $v$ is equal to the convex hull of the cones of $F_1$ and $F_2$ at $v$. Therefore, $\dim F=\dim F_1+\dim F_2$. 

    Denote the minimal vertex of $F_1$ by $w_1$ and the maximal vertex of $F_2$ by $w_2$. Then both $F_1$ and $F_2$ are contained in the face $F^+(w_1)\cap F^-(w_2)$. Hence, both of the cones of $F_1$ and $F_2$ at $v$ are contained in the cone of $F^+(w_1)\cap F^-(w_2)$ at $v$. Therefore, the face $F$ must be contained in $F^+(w_1)\cap F^-(w_2)$, and the last claim of the lemma follows. 
\end{proof}
The following corollary is a special case of the lemma when $F_1$ and $F_2$ are edges.
\begin{corollary}\label{cor:2faceexist}
    Given any directed edges $v_1\to v_2$ and $v_2\to v_3$, there exists a 2-face of $P$ containing both of these edges. 
\end{corollary}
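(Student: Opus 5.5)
The plan is to apply Lemma~\ref{lem:bendface} with $F_1$ the edge $v_1\to v_2$ and $F_2$ the edge $v_2\to v_3$; this is legitimate because we work under Assumption~\ref{ass:equivalent} throughout this section. The first step is to check the hypothesis of the lemma. The maximal vertex (with respect to $\ell$) of the edge $\mathrm{Conv}(v_1,v_2)$ is $v_2$, since $\ell(v_2)>\ell(v_1)$ for a directed edge. The minimal vertex of $\mathrm{Conv}(v_2,v_3)$ is also $v_2$, since $\ell(v_2)<\ell(v_3)$.

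Lemma~\ref{lem:bendface} then gives a unique face $F$ of $P$ containing both edges with
\[
\dim F=\dim F_1+\dim F_2=1+1=2,
\]
which is the required $2$-face. As a byproduct, the lemma also tells us that the minimal vertex of $F$ is $v_1$ and its maximal vertex is $v_3$. Combined with Lemma~\ref{lem:2-face} (only triangles and quadrilaterals can occur), this shows that $F$ is either the triangle $v_1v_2v_3$ or a quadrilateral with source $v_1$ and sink $v_3$.

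There is no real obstacle, since all the content lies in Lemma~\ref{lem:bendface}, which rests on Lemma~\ref{lem:dimsum=dimP}: the cones of $F^-(v_2)$ and $F^+(v_2)$ at $v_2$ span complementary subspaces, so the two edges, lying in $F^-(v_2)$ and $F^+(v_2)$ respectively, together span a $2$-dimensional face. The only point to confirm is that the ambient standing assumption is indeed in force, which it is for the remainder of the section.
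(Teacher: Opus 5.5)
Your proposal is correct and matches the paper's argument: the paper states this corollary as the special case of Lemma~\ref{lem:bendface} with $F_1$ and $F_2$ the two edges, under the standing Assumption~\ref{ass:equivalent}. Your extra observation, that the resulting $2$-face is either the triangle $v_1v_2v_3$ or a quadrilateral with source $v_1$ and sink $v_3$, is also correct, though the statement does not need it.
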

Let $(F_1, \ldots, F_k)$ be a monotone chain of faces of $P$. 
By Theorem~\ref{thm:facesofCH}, there is an associated face $[F_1, \ldots, F_k]$ of $\CH(P)$. 

By Lemma \ref{lem:bendface}, for every $1\leq i\leq k-1$, there exists a face $F_{i, i+1}$ containing $F_i$ and $F_{i+1}$ and satisfying $\dim F_{i, i+1}=\dim F_i+\dim F_{i+1}$. Moreover, the minimal vertex of $F_{i, i+1}$ is equal to the minimal vertex of $F_i$ and the maximal vertex of $F_{i, i+1}$ is equal to the maximal vertex of $F_{i+1}$. Therefore, 
$(F_1, \ldots, F_{i-1}, F_{i, i+1}, F_{i+2}, \ldots, F_k)$
is also a monotone chain of faces, and hence corresponds to a face of $\CH(P)$. 
\begin{proposition}\label{prop:numberofplusonefaces}
    Under the above notation, the faces of $\CH(P)$ that contain $[F_1,\ldots,F_k]$ as a facet are precisely the following:
\begin{itemize}
\item the faces $[F_1,\ldots,F_{i-1}, F_{i,i+1}, F_{i+2},\ldots,F_k]$ described above; and
\item the faces $[F_1,\ldots,F_{i-1}, F_i', F_{i+1},\ldots,F_k]$, where $F_i'$ is a face having $F_i$ as a facet and sharing the same minimal and maximal vertices as $F_i$.
\end{itemize}
\end{proposition}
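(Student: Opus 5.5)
We work under Assumption~\ref{ass:equivalent} and use three earlier results: Theorem~\ref{thm:facesofCH} for the faces of $\CH(P)$ and their dimensions, and Theorem~\ref{thm:facelatticeCH} for inclusions of faces. By Theorem~\ref{thm:facesofCH}, the faces of $\CH(P)$ containing $[F_1,\ldots,F_k]$ as a facet are exactly the faces $[G_1,\ldots,G_j]$ with
\[
F_1\cup\cdots\cup F_k\subset G_1\cup\cdots\cup G_j
\qquad\text{and}\qquad
\sum_i(\dim G_i-1)=\sum_i(\dim F_i-1)+1 .
\]
The plan is to show that these two conditions force one of the two listed shapes, and conversely that both listed shapes satisfy them.

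\textbf{The listed faces qualify.} For the first type, the dimension sum is
\[
\dim F_i+\dim F_{i+1}-1=(\dim F_i-1)+(\dim F_{i+1}-1)+1 ,
\]
and $F_{i,i+1}\supset F_i\cup F_{i+1}$. For the second type, $\dim F_i'=\dim F_i+1$ and $F_i'\supset F_i$. Both are monotone chains, since the minimal and maximal vertices match by Lemma~\ref{lem:bendface} and by hypothesis respectively. So each is a face of the right dimension containing $[F_1,\ldots,F_k]$.

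\textbf{Any such face is listed.} Let $[G_1,\ldots,G_j]$ satisfy the two conditions. Write $u_0,\dots,u_k$ for the breakpoints of the $F$-chain, so $F_i$ runs from $u_{i-1}$ to $u_i$, and $w_0,\dots,w_j$ for the breakpoints of the $G$-chain.
\begin{enumerate}
\item The union $\bigcup G_m$ meets each level set $\{\ell=\ell(w_m)\}$ in the single point $w_m$, and this level set meets $\bigcup F_i$. Hence each $w_m$ is one of the $u$'s.
\item Consequently each $G_m$ contains the consecutive block $F_a,\ldots,F_b$ lying between $w_{m-1}$ and $w_m$. Indeed, within the slab $\ell(w_{m-1})\le\ell\le\ell(w_m)$ the $F$'s must lie in $G_m$, since the other $G$'s are outside that slab except at endpoints.
\item These blocks partition $\{1,\ldots,k\}$. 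Apply Lemma~\ref{lem:dimsum=dimP} iteratively, in the form used in Lemma~\ref{lem:dimatmostP} and restricted to the face $F^+(u_{a-1})\cap F^-(u_b)$. This gives
\[
\dim G_m\ \ge\ \dim F_a+\cdots+\dim F_b ,
\]
the dimension of the face generated by the block as in Lemma~\ref{lem:bendface}, since Corollary~\ref{cor: containing chains=containing faces} forces that generated face to lie inside $G_m$.
\item Summing $(\dim G_m-1)$ over $m$, each block of size $s$ contributes at least $s-1$ more than $\sum_{i=a}^{b}(\dim F_i-1)$. Since the total excess is exactly $1$, either all blocks are singletons and exactly one $G_m$ has dimension $\dim F_m+1$, or exactly one block has size $2$, with equality, and all other $G_m=F_m$.
\item In the first case, $G_i\supset F_i$ with the same endpoints and one dimension higher, so $G_i=F_i'$ as listed. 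In the second case, $G$ contains $F_i\cup F_{i+1}$ and has dimension $\dim F_i+\dim F_{i+1}$, so $G=F_{i,i+1}$ by the uniqueness in Lemma~\ref{lem:bendface}.
\end{enumerate}

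\textbf{Main obstacle.} The hard step is the lower bound in step 3, that $\dim G_m\ge\sum_{i=a}^{b}\dim F_i$ whenever $G_m$ contains the chain $F_a,\ldots,F_b$. I would prove it first for two faces meeting at a vertex $v$, then induct on the block size. For two faces, use that $F_a\subset F^-(v)$ and $F_{a+1}\subset F^+(v)$ and that the cones of $F^-(v)$ and $F^+(v)$ at $v$ span skew subspaces, as shown in the proof of Lemma~\ref{lem:bendface}. Then the cone of $G_m$ at $v$ contains the direct sum of the cones of $F_a$ and $F_{a+1}$, so the dimensions add.
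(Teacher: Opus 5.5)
Your proof is correct and is essentially the paper's argument. Theorem~\ref{thm:facelatticeCH} puts the breakpoints of the $G$-chain among those of the $F$-chain, each $G_m$ satisfies $\dim G_m\ge\sum_{i\in B_m}\dim F_i$ over its block $B_m$, and the dimension formula of Theorem~\ref{thm:facesofCH} then leaves only a single merge of two consecutive faces (identified by the uniqueness in Lemma~\ref{lem:bendface}) or a single one-dimensional enlargement of some $F_i$.

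The step you flag as the main obstacle is immediate, as in the paper, from Lemma~\ref{lem:dimatmostP} applied with $G_m$ itself as the ambient polytope (faces inherit Assumption~\ref{ass:equivalent}). Applying it inside the larger face $F^+(u_{a-1})\cap F^-(u_b)$, as your first phrasing suggests, would give an upper bound in the wrong direction, but your generated-face and cone arguments are both valid.
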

\begin{proof}
    Denote the maximal vertex of $F_i$ by $v_i$, and denote the source of $P$ by $v_0$, which is also the minimal vertex of $F_1$. By Theorem~\ref{thm:facelatticeCH}, if a face $[G_1, \ldots, G_m]$ of $\CH(P)$ contains the face $[F_1, \ldots, F_k]$, then 
    \[
    F_1\cup\cdots \cup F_k\subset G_1\cup \cdots \cup G_m.
    \]
    In this case, the maximal and minimal vertices of each $G_i$ must be a vertex in $\{v_0, \ldots, v_k\}$. In particular, $m\leq k$. 

  Since each face of $P$ also satisfies Assumption~\ref{ass:equivalent}, we can apply Lemma~\ref{lem:dimatmostP} to each $G_i$ and conclude that 
    \[
    \sum_{1\leq i\leq k}\dim F_i\leq \sum_{1\leq i\leq m}\dim G_i. 
    \]
    By the dimension formula of the faces of $\CH(P)$ in Theorem \ref{thm:facesofCH}, if the face $[G_1, \ldots, G_m]$ is one dimension larger than $[F_1, \ldots, F_k]$, then either
    \[
    \sum_{1\leq i\leq k}\dim F_i= \sum_{1\leq i\leq m}\dim G_i \quad\text{and}\quad m=k-1
    \]
    or
    \[
    \sum_{1\leq i\leq k}\dim F_i= \left(\sum_{1\leq i\leq m}\dim G_i\right)-1\quad\text{and}\quad m=k.
    \]
    In the first case, the sequence $(G_1, \ldots, G_m)$ must be equal to $(F_1, \ldots, F_{i-1}, F_{i, i+1}, F_{i+2}, \ldots, F_k)$ for some $i$. In the second case, the sequence $(G_1, \ldots, G_m)$ must be of the form $(F_1, \ldots, F_{i-1}, F_i', F_{i+1}, \ldots, F_k)$ as described in the proposition. 

    Conversely, it follows immediately from Theorem~\ref{thm:facelatticeCH} that every face $[F_1, \ldots, F_{i-1}, F_{i, i+1}, F_{i+2}, \ldots, F_k]$ and $[F_1, \ldots, F_{i-1}, F_i', F_{i+1}, \ldots, F_k]$ contains $[F_1, \ldots, F_k]$ as a facet. 
\end{proof}


\begin{theorem}\label{thm:simpleCH}
    Assume that $P$ satisfies Assumption \ref{ass:equivalent}. 
    Then the following conditions are equivalent:
    \begin{enumerate}
    \item $\CH(P)$ is simple.
    \item For every directed edge $v\to w$ of $P$, the number of triangles of the form $(v, w, x)$, where $v\to x$ and $x\to w$ are directed edges, is equal to $\dim F^-(w)\cap F^+(v)-1$.
    \item For every directed edge $v\to w$ of $P$, the face $F^-(w)\cap F^+(v)$ is simple along the edge $v\to w$, that is, the star of the edge $v\to w$ in $F^-(w)\cap F^+(v)$ is simple. 
    \item For every positive dimensional face $F$ of $P$ with minimal vertex $v$ and maximal vertex $w$, the star of $F$ within the face $F^+(v)\cap F^-(w)$ is simple. 
    \end{enumerate}
\end{theorem}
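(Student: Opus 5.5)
The plan is to show $(1)\Leftrightarrow(4)$ directly from Proposition~\ref{prop:numberofplusonefaces}, and then to close the cycle through $(4)\Rightarrow(3)\Rightarrow(2)\Rightarrow(1)$. The facts about faces of $P$ that I need are: for a monotone chain $(F_1,\dots,F_k)$, write $v_{i-1},v_i$ for the minimal and maximal vertex of $F_i$ and set $Q_i:=F^+(v_{i-1})\cap F^-(v_i)$. Every face of $P$ with minimal vertex $v_{i-1}$ and maximal vertex $v_i$ is contained in $Q_i$. Conversely, every face of $Q_i$ containing $F_i$ has these same extremal vertices, since it contains $v_{i-1},v_i$ and all its vertices lie $\CCC$-between them.

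By Lemma~\ref{lem:dimensionsbehavenicely}, applied inside $F^+(v_{i-1})$, which also satisfies Assumption~\ref{ass:equivalent} by Proposition~\ref{prop:constructexamples}(3), we have $\dim F^+(v_{i-1})=\dim Q_i+\dim F^+(v_i)$. Telescoping from $v_0$ (the source, so $F^+(v_0)=P$) to $v_k$ (the sink, so $F^+(v_k)$ is a point) gives $\sum_i \dim Q_i=\dim P$.

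\textbf{$(1)\Leftrightarrow(4)$.} A polytope is simple if and only if every face $G$ is a facet of exactly $\operatorname{codim} G$ faces. Indeed, for a vertex this count is the number of edges at it, and for a simple polytope all face figures are simplices. By Theorem~\ref{thm:facesofCH} and the identity $\sum_i\dim Q_i=\dim P$, the face $[F_1,\dots,F_k]$ has codimension
\[
\dim P-1-\sum_i(\dim F_i-1)=(k-1)+\sum_i(\dim Q_i-\dim F_i).
\]
By Proposition~\ref{prop:numberofplusonefaces}, the faces covering $[F_1,\dots,F_k]$ come in two kinds. The first kind are the $k-1$ merges $F_{i,i+1}$, which are pairwise distinct. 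The second kind, for each $i$, are the faces of $Q_i$ having $F_i$ as a facet; their number is the number of vertices of the face figure $Q_i/F_i$. This face figure has dimension $\dim Q_i-\dim F_i-1$, so it has at least $\dim Q_i-\dim F_i$ vertices, with equality exactly when it is a simplex.

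Hence $[F_1,\dots,F_k]$ is covered by exactly codimension-many faces if and only if each $Q_i/F_i$ is a simplex, i.e.\ the star of $F_i$ in $Q_i$ is simple. Since any positive-dimensional face $F$ with minimal vertex $v$ and maximal vertex $w$ extends to a monotone chain (add a path from the source to $v$ and a path from $w$ to the sink), this gives $(1)\Leftrightarrow(4)$.

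\textbf{$(4)\Rightarrow(3)\Rightarrow(2)\Rightarrow(1)$.} The implication $(4)\Rightarrow(3)$ is the special case where $F$ is the edge $v\to w$.

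For $(3)\Leftrightarrow(2)$, let $Q=F^+(v)\cap F^-(w)$ with $d=\dim Q$. Simplicity along the edge means that the face figure $Q/(v\to w)$, which is $(d-2)$-dimensional, is a simplex, i.e.\ the edge lies in exactly $d-1$ two-faces of $Q$. Any $2$-face $T$ containing the edge $v\to w$ has $v$ as its minimum and $w$ as its maximum, so $T\subseteq Q$. By Lemma~\ref{lem:2-face} and Example~\ref{exm:triandquad}, together with Corollary~\ref{cor:stratonface}, $T$ is either a triangle or a quadrilateral, and a quadrilateral with an edge between its source and sink is excluded. So $T$ is a triangle $(v,w,x)$ with $v\to x\to w$. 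Thus the $2$-faces of $Q$ through the edge are exactly the triangles counted in (2), which gives $(2)\Leftrightarrow(3)$.

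For $(2)\Rightarrow(1)$, one could rerun the covering count at vertices only, using Corollary~\ref{cor:edgesadjacenttovertexofCHP}. The edges at $[v_0\to\cdots\to v_k]$ are the following. Each consecutive pair of path edges contributes exactly one $2$-face, which exists and is unique by Lemma~\ref{lem:bendface}. Each path edge $v_{i-1}\to v_i$ contributes its triangles $t_i$. Hence the degree of this vertex is $(k-1)+\sum_i t_i$. Given (2), this equals $(k-1)+\sum_i(\dim Q_i-1)=\dim P-1$, so every vertex of $\CH(P)$ has degree $\dim\CH(P)$ and $\CH(P)$ is simple.

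\textbf{Main obstacle.} The main care point is the justification of the two facts about faces of $Q_i$ stated at the start: that everything with the same extremal vertices lives in $Q_i$, and that all faces of $Q_i$ over $F_i$ keep those endpoints. Without them, the second kind of cover in Proposition~\ref{prop:numberofplusonefaces} could not be identified with the vertices of $Q_i/F_i$. I would prove both from Corollary~\ref{cor:gradthen graded} ($\BB^\pm=\CCC$) together with Assumption~\ref{ass:irred}.
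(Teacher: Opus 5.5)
Your proposal is correct and is essentially the paper's proof. The steps match: the same 2-face classification for (2)$\Leftrightarrow$(3), the same vertex-degree count via Corollary~\ref{cor:edgesadjacenttovertexofCHP} and Lemma~\ref{lem:dimensionsbehavenicely} for (2)$\Rightarrow$(1), and the same cover count via Proposition~\ref{prop:numberofplusonefaces} for (1)$\Rightarrow$(4). The one difference is that the paper evaluates that count at the specific face $[F^-(v),F,F^+(w)]$, where $F$ is the only position with $F_i\neq Q_i$, whereas you use an arbitrary extension of $F$ together with the per-position lower bound, which also gives you (4)$\Rightarrow$(1) directly.

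One sentence in (2)$\Leftrightarrow$(3) should be restricted. A $2$-face of $P$ through $v\to w$ need not have $v$ as minimum and $w$ as maximum; for instance, take an edge leaving the bottom vertex of a square face of a cube. You only need the claim for $2$-faces of $Q$, and there it holds because every vertex of $Q$ lies $\CCC$-between $v$ and $w$.
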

\begin{proof}
Clearly, $(4)\Rightarrow (3)$. 

Now, we prove $(2)\Leftrightarrow(3)$. The face $F^-(w)\cap F^+(v)$ is simple along the edge $v\to w$ if and only if the number of 2-dimensional faces of $F^-(w)\cap F^+(v)$ containing $v\to w$ is equal to $\dim(F^-(w)\cap F^+(v))-1$. 
By Proposition~\ref{prop:constructexamples}~(3), every 2-face of
$F^-(w)\cap F^+(v)$ containing $v\to w$ must be a triangle of the form $(v, w, x)$ with $v\to x$ and $x\to w$ being directed edges (see also Example \ref{exm:triandquad}). Therefore, $(2)\Leftrightarrow(3)$. 

Next, we prove $(2)\Rightarrow (1)$. 
By Corollary \ref{cor:vertofCHP}, a vertex of $\CH(P)$ is of the form $[v_0\to  \cdots\to  v_k]$, where $v_0$ is the source and $v_k$ is the sink. 
Fixing such a vertex of $\CH(P)$, we compute the number of edges in $\CH(P)$ connecting to this vertex using Corollary \ref{cor:edgesadjacenttovertexofCHP}.
By Corollary \ref{cor:2faceexist}, there are $k-1$ edges of the second kind in Corollary \ref{cor:edgesadjacenttovertexofCHP}.

Assuming condition (2) holds, 
the number of edges of the first kind in Corollary \ref{cor:edgesofCHP} is equal to 
\[
\sum_{1\leq i\leq k}\left(\dim F^-(v_i)\cap F^+(v_{i-1})-1\right)=\left(\sum_{1\leq i\leq k}\dim F^-(v_i)\cap F^+(v_{i-1})\right)-k.
\]
By Lemma \ref{lem:dimensionsbehavenicely}, $\dim F^+(v_{i-1})\cap F^-(v_i)=\dim F^-(v_i)- \dim F^-(v_{i-1})$. 
Therefore, 
\begin{align*}
\left(\sum_{1\leq i\leq k}\dim F^-(v_i)\cap F^+(v_{i-1})\right)-k&=\left(\sum_{1\leq i\leq k}\left(\dim F^-(v_i)- \dim F^-(v_{i-1})\right)\right)-k\\
&=\dim F^-(v_k) - \dim F^-(v_0)-k\\
&=\dim P-k.
\end{align*}

Therefore, the total number of edges in $\CH(P)$ containing $[v_0\to \cdots \to v_k]$ is equal to 
\[
(k-1)+\dim P-k=\dim P-1=\dim \CH(P).
\]
Therefore, $[v_0\to \cdots \to v_k]$ is a simple vertex of $\CH(P)$. Since $[v_0\to \cdots \to v_k]$ is an arbitrary vertex of $\CH(P)$, $\CH(P)$ is simple. 

Finally, we prove $(1)\Rightarrow (4)$. Let $F$ be a face of $P$ whose minimal and maximal vertices are $v$ and $w$, respectively. Since $\CH(P)$ is simple, the number of faces of $\CH(P)$ containing $[F^-(v), F, F^+(w)]$ as a facet is equal to 
$\dim \CH(P)-\dim [F^-(v), F, F^+(w)]$. By Theorem~\ref{thm:facesofCH} and Lemma \ref{lem:dimensionsbehavenicely}, we have
\begin{align*}
&\dim \CH(P)-\dim [F^-(v), F, F^+(w)]\\
=&\dim P-1-(\dim F^-(v)+\dim F+\dim F^+(w)-3)\\
=&(\dim F^-(v)+\dim F^+(v)\cap F^-(w)+\dim F^+(w))-1-(\dim F^-(v)+\dim F+\dim F^+(w)-3)\\
=&\dim F^+(v)\cap F^-(w)-\dim F+2.
\end{align*}

On the other hand, we can also compute the number of faces of $\CH(P)$ containing $[F^-(v), F, F^+(w)]$ as a facet using Proposition \ref{prop:numberofplusonefaces}. There are exactly 2 such faces of the first kind in Proposition \ref{prop:numberofplusonefaces}, and the number of faces of the second kind is equal to the number of faces in $F^+(v)\cap F^-(w)$ containing $F$ as a facet. Combining the above arguments, we conclude that the number of faces in $F^+(v)\cap F^-(w)$ containing $F$ as a facet is equal to 
\[
\dim F^+(v)\cap F^-(w)-\dim F.
\]
This is exactly equivalent to the star of $F$ within $F^+(v)\cap F^-(w)$ being simple. 
\end{proof}

Since simple polytopes satisfy Assumption~\ref{ass:irred} and automatically fulfill condition~(3) of Theorem~\ref{thm:simpleCH}, the following corollary is immediate.
\begin{corollary}\label{cor:Psim then CH sim}
    If $P$ is a simple polytope and $O^-$ is a stratification, then $\CH(P)$ is simple.
\end{corollary}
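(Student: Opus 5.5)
The plan is to verify condition (3) of Theorem~\ref{thm:simpleCH} and then apply the implication $(3)\Rightarrow(1)$ established there. The hypotheses of that theorem have to be checked first. By the lemma on simple polytopes in the subsection on irreducible strata, a simple $P$ automatically satisfies Assumption~\ref{ass:irred}, since $F^-(v)$ and $F^+(v)$ are the faces spanned by the incoming and outgoing edges at $v$. Together with the hypothesis that $O^-$ is a stratification, this means $P$ satisfies Assumption~\ref{ass:equivalent}, so Theorem~\ref{thm:simpleCH} applies.

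It remains to check condition (3). Fix a directed edge $v\to w$ and put $Q=F^-(w)\cap F^+(v)$. This is an intersection of two faces of $P$, hence a face of $P$, and it contains the edge $v\to w$. Every face of a simple polytope is itself a simple polytope, because the vertex figure of a face at a vertex is a face of the vertex figure of $P$, which is a simplex. In particular $Q$ is simple at every vertex, so the star of the edge $v\to w$ in $Q$ is simple. This gives condition (3), and hence condition (1): $\CH(P)$ is simple.

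The only point needing care is what ``simple along the edge'' means in condition (3). In the proof of $(2)\Leftrightarrow(3)$ it is read as: the number of $2$-faces of $Q$ containing $v\to w$ equals $\dim Q-1$. In a simple polytope $Q$ of dimension $d$, each edge lies in exactly $\binom{d-1}{1}=d-1$ two-faces. This count comes from the vertex figure at $v$, which is a $(d-1)$-simplex: the edge is a vertex of that simplex, and it lies in exactly $d-1$ edges of the simplex, which correspond to the $2$-faces of $Q$ through the edge. So condition (3) holds in exactly the sense used in the proof of Theorem~\ref{thm:simpleCH}, and no real obstacle is expected.
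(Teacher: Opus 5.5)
Your proposal is correct and follows the paper's own argument: a simple polytope satisfies Assumption~\ref{ass:irred}, which together with the stratification hypothesis gives Assumption~\ref{ass:equivalent}. Condition~(3) of Theorem~\ref{thm:simpleCH} then holds automatically, since every face of a simple polytope is simple, so the implication $(3)\Rightarrow(1)$ yields that $\CH(P)$ is simple.
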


In the rest of the section we will present some operations on polytopes preserving condition that the monotone path polytope is simple.

\begin{proposition}\label{prop:construct simple examples}
Suppose all the following polytopes satisfy Assumption~\ref{ass:equivalent}.
    \begin{enumerate}
\item   Let $(P_1,\ell_1)$ and $(P_2,\ell_2)$  be such that $\CH(P_1)$ and $\CH(P_2)$ are simple, Then $(P_1\times P_2,\ell_1\times\ell_2)$ satisfies Assumption~\ref{ass:equivalent} and $\CH(P_1\times P_2,\ell_1\times\ell_2)$ is simple.
\item Let $P\subset \RR^n$ and $\ell: \RR^n\to \RR$ be such that every vertex $v$ is simple in $F^-(v)$ (resp. in $F^+(v)$). Then the pyramid $Pyr(P)\subset\RR^{n+1}$ over $P$ with the apex that is the new source (resp. sink) satisfies Assumption~\ref{ass:equivalent} and has simple Chow quotient. 
\item If $\CH(P)$ is simple then for  any face $F$, $\CH(F)$ is simple.
 \end{enumerate}
\end{proposition}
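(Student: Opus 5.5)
All three parts will be reduced to condition (3) of Theorem~\ref{thm:simpleCH}: for every directed edge $v\to w$, the face $F^-(w)\cap F^+(v)$ is simple along $v\to w$. The invariance of Assumption~\ref{ass:equivalent} under products, pyramids and faces is Proposition~\ref{prop:constructexamples}, so only simplicity of the Chow polytope needs checking.

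\emph{Part (3).} Let $F$ be a face of $P$ and $v\to w$ a directed edge of $F$. By Lemma~\ref{lem:F-intersect gives F-} and its $+$ analogue,
\[
F^-_F(w)\cap F^+_F(v)=F^-_P(w)\cap F^+_P(v)\cap F,
\]
which is a face of $Q:=F^-_P(w)\cap F^+_P(v)$. Since $\CH(P)$ is simple, condition (3) says $Q$ is simple along $v\to w$. A face of $Q$ containing the edge is then also simple along that edge, because the link of the edge in the face is a face of the link in $Q$, and a face of a simplex is a simplex. Condition (3) therefore holds for $F$, and $\CH(F)$ is simple.

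\emph{Part (1).} Edges of $P_1\times P_2$ have the form $e\times\{u\}$ or $\{u\}\times e$. By the product formula in the proof of Proposition~\ref{prop:construct examples}, $F^\pm$ factor as products. So for the edge $(v,u)\to(w,u)$,
\[
F^-(w,u)\cap F^+(v,u)=\bigl(F^-_{P_1}(w)\cap F^+_{P_1}(v)\bigr)\times\bigl(F^-_{P_2}(u)\cap F^+_{P_2}(u)\bigr)=\bigl(F^-_{P_1}(w)\cap F^+_{P_1}(v)\bigr)\times\{u\}.
\]
This is simple along the edge by condition (3) for $P_1$, and the other type of edge is symmetric.

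\emph{Part (2).} Take the apex $a$ to be the new source; the sink case is the mirror image with $F^+$. Edges are either edges of $P$ or edges $a\to v$. As computed in the proof of Proposition~\ref{prop:construct examples}, $F^-_{Pyr}(v)=\Con(F^-_P(v))$, while $F^+_{Pyr}(v)=F^+_P(v)$ for $v\in P$ and $F^+_{Pyr}(a)=Pyr(P)$. There are two cases.
\begin{itemize}
\item For an edge $v\to w$ of $P$, the relevant face is $\Con(F^-_P(w))\cap F^+_P(v)=F^-_P(w)\cap F^+_P(v)$. It lies inside $F^-_P(w)$, and $w$ is simple in $F^-_P(w)$ by hypothesis. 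Simplicity at the vertex $w$ gives simplicity of the star of every face through $w$ in $F^-_P(w)$, hence in its face $F^-_P(w)\cap F^+_P(v)$, and in particular along $v\to w$.
\item For an edge $a\to v$, the relevant face is $\Con(F^-_P(v))\cap Pyr(P)=\Con(F^-_P(v))$. Faces of this cone containing the edge $a v$ are cones over faces of $F^-_P(v)$ containing $v$. So the edge $av$ is simple in $\Con(F^-_P(v))$ exactly when $v$ is simple in $F^-_P(v)$, which is the hypothesis.
\end{itemize}

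The main obstacle is this pyramid step. The descriptions of $F^\pm$ in the pyramid have to be read off carefully, and the "simple along an edge" notion must be matched exactly with "simple at the vertex $v$" under coning.
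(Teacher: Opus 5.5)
Your proof is correct. For part (2) it follows the paper's route: you check the criterion of Theorem~\ref{thm:simpleCH} edge by edge, and the apex edges reduce via the cone structure to simplicity of $v$ in $F^-_P(v)$. You phrase this through condition (3), where the paper counts the triangles $(v,w,a)$ in condition (2), which is the same count; your arguments for parts (1) and (3), which the paper calls straightforward, are sound. One small improvement: you derive the case of the old edges of $P$ directly from the vertex-simplicity hypothesis, whereas the paper invokes simplicity of $\CH(P)$, which is not literally among the hypotheses of (2).
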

\begin{proof}
    Parts $(1)$ and $(3)$ are straightforward, we will prove part $(2)$. We assume that every vertex $v$ is simple in $F^+(v)$ and that the apex $a$ of the pyramid $Pyr(P)$ is the new sink, the proof in the second case is the same. 

    First, by part $(2)$ of Proposition~\ref{prop:constructexamples} $Pyr(P)$ satisfies Assumption~\ref{ass:equivalent}. Now, by Theorem~\ref{thm:simpleCH}, for simplicity of monotone path polytope, it is enough to show that for every directed edge $v\to w$ of $Pyr(P)$ we have that the number of triangles $(v,w,x)$ where $v\to x$ and $x\to w$ are directed edges, is equal to $\dim F^-(w)\cap F^+(v)-1$. Since $\CH(P)$ is simple, it is enough only to consider directed edges $v\to a$. Moreover, after replacing $P$ with $F^+(v)$ we can assume that $v$ is the minimal vertex of $P$. In that case the number of the triangles $(v,w,a)$ in $Pyr(P)$ as above is equal to the degree of $v$ in $P$ since any vertex of $P$ is connected to the apex of the $Pyr(P)$. On the other hand, we have $F^-(a)\cap F^+(v) = Pyr(P)$, so the statement follows by simplicity of vertex $v$ in~$P$.
\end{proof}

\begin{corollary}
    Let $P$ be a polytope obtained by taking alternatively taking products and iterated upper (or lower) pyramid over simple polytopes satisfying Assumption~\ref{ass:equivalent}. Then $\CH(P)$ is simple.
\end{corollary}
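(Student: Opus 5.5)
The corollary follows by induction on the number of construction steps, using the product and pyramid parts of Proposition~\ref{prop:construct simple examples}. Before starting the induction, I would strengthen the statement so that it carries the hypothesis needed for the pyramid step. The claim to prove is: every polytope $Q$ obtained by the construction satisfies Assumption~\ref{ass:equivalent}, has $\CH(Q)$ simple, and is such that every vertex $v$ is simple in $F^-(v)$ and in $F^+(v)$.

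For the base case, let $Q$ be a simple polytope satisfying Assumption~\ref{ass:equivalent}. Then $\CH(Q)$ is simple by Corollary~\ref{cor:Psim then CH sim}. Every face of a simple polytope is simple, so each vertex $v$ is simple in the faces $F^\pm(v)$. These are faces by the lemma on simple polytopes, which identifies them with the faces spanned by the incoming and outgoing edges at $v$.

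For the product step, Proposition~\ref{prop:construct simple examples}(1) gives Assumption~\ref{ass:equivalent} and simplicity of $\CH(P_1\times P_2)$. For the local condition, the proof of Proposition~\ref{prop:construct examples}(1) shows that $F^-_{P_1\times P_2}(v,w)=F^-_{P_1}(v)\times F^-_{P_2}(w)$, and likewise for $F^+$. A vertex of a product of faces is simple exactly when it is simple in each factor, so the local condition is inherited.

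For the pyramid step, let $a$ be the apex and say it is the new sink; the source case is symmetric. Proposition~\ref{prop:construct simple examples}(2) uses the hypothesis that every vertex $v$ of $P$ is simple in $F^+(v)$. It gives Assumption~\ref{ass:equivalent} for the pyramid and simplicity of its monotone path polytope. To continue the induction, I then recompute the cells in the pyramid.
\begin{itemize}
\item For $v\neq a$, $F^-_{Pyr}(v)=F^-_P(v)$, which is unchanged, so simplicity of $v$ there is inherited.
\item For $v\neq a$, $F^+_{Pyr}(v)=Pyr(F^+_P(v))$. A base vertex of a pyramid has degree one higher than in the base, as does the dimension, so $v$ stays simple.
\item $F^-(a)$ is the whole pyramid. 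In it, $a$ is simple exactly when $P$ is a simplex, which generally fails.
\end{itemize}
So the property "$v$ simple in $F^-(v)$" can break at the apex, and I expect this to be the main obstacle when the next step is a pyramid in the opposite direction.

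I would handle this by reading the construction's "upper (or lower)" as meaning iterated pyramids all in one fixed direction. Then only the relevant one-sided condition needs to be propagated: simplicity of each $v$ in $F^+(v)$ for upper pyramids. By the computation above, this condition survives upper pyramids, because $F^+(a)=\{a\}$ is trivially simple. It also survives products, so the induction closes. If mixed directions were intended, I would instead check Theorem~\ref{thm:simpleCH}(2) directly on the new edges at the apex, where the count of triangles through an edge $v\to a$ equals the degree of $v$ in $F^+_P(v)$.
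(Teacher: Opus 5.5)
Your proposal is correct and matches the paper's proof: induct on the number of operations while carrying the one-sided hypothesis of Proposition~\ref{prop:construct simple examples}(2) (every $v$ simple in $F^+(v)$ for upper pyramids, resp.\ in $F^-(v)$ for lower ones). This hypothesis holds for simple polytopes and is preserved by products and by pyramids in the fixed direction; you simply verify these propagation steps in more detail than the paper does. Your fallback for mixed directions cannot succeed, because that reading of the statement is false: Example~\ref{ex:doublepyramid}, a lower pyramid over a square followed by an upper pyramid, gives a non-simple $\CH$, so the fixed-direction reading is forced rather than merely convenient.
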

\begin{proof}
 An iterated upper (or lower) pyramid over simple polytope satisfying Assumption~\ref{ass:equivalent} satisfies assumptions of part $(2)$ of Proposition~\ref{prop:construct simple examples}. Moreover, if two polytopes $P,Q$ satisfy assumptions of part $(2)$ of Proposition~\ref{prop:construct simple examples}, then their product also satisfies assumptions of part $(2)$ of Proposition~\ref{prop:construct simple examples}. Hence the statement follows by induction on the number of operations.
\end{proof}
\begin{remark}
    Notice that unlike part $(2)$ of Proposition~\ref{prop:constructexamples}, in part $(2)$ of Proposition~\ref{prop:construct simple examples} we could only take a pyramid provided that either maximal or minimal vertex of $P$ is simple. The following example shows that we could not expect more general statement of part $(2)$ of Proposition~\ref{prop:construct simple examples}.
\end{remark}

\begin{example}\label{ex:doublepyramid}
    Let us consider the polytope $P$ this is a $3$-dimensional pyramid over a square. We assume that the function $l$ attains minimum at the apex $v$ of the pyramid. In this setting Assumption \ref{ass:equivalent} holds for $P$ and clearly $\CH(P)$ is simple. Let now $P'$ be a $4$-dimensional pyramid over $P$ assuming that $\ell$ attains maximum at the apex $w$ of $P'$. By Proposition~\ref{prop:constructexamples}, Assumption \ref{ass:equivalent} holds also for $P'$. However, $\CH(P')$ is not simple at the vertex corresponding to the edge joining $v$ and $w$. Indeed, there are four triangles containing that edge, each one containing one of the four edges of $P$ that are adjacent to $v$. 
    
    Notice that in order to construct $P'$, we took pyramid upper pyramid over polytope $P$ with non-simple lower vertex. Thus part $(2)$ of Proposition~\ref{prop:construct simple examples} is not applicable in that case.
\end{example}

\section{$\cP$-kernels, Kazhdan-Lusztig-Stanley polynomials and Chow polynomials}\label{sec:ker and KLS poly}

In his seminal work~\cite{stanley1992subdivisions}, Stanley generalized the theory of Kazhdan–Lusztig polynomials to arbitrary posets and introduced the notion of a kernel of a poset. In~\cite{Proudfoot}, Proudfoot provided a geometric interpretation of these generalized Kazhdan–Lusztig polynomials via point counting over finite fields. In this section, we present a new geometric perspective on poset kernels and Kazhdan–Lusztig polynomials, arising from Białynicki–Birula theory. Although a complete characterization of when Białynicki–Birula theory produces a poset equipped with a kernel and Kazhdan–Lusztig polynomials is currently unknown, this framework is well understood for projective toric varieties, and more generally, for convex polytopes.

\subsection{Kazhdan-Lusztig-Stanley theory} Let us first briefly recall basics of Kazhdan-Lusztig-Stanley theory. We follow closely the exposition in \cite[Sections~2 and~3]{ferroni2024chow}  and refer reader there as well as to \cite[Section~2]{Proudfoot} for details.  Throughout this section all posets are assumed to be finite.
Let  $(X,\leq)$ be a finite poset, for $s,t\in X$ with $s\leq t$, we will denote by $[s,t]=\{v\in X \,|\, s\leq v\leq t\}$ the interval between $s$ and $t$.
The incidence algebra $\cI(X)$ of $(X,\leq)$ is a free $\Z[x]$-module over the set of all intervals of $X$. In other words, an element $a\in \cI(X)$ associates a polynomial $a_{st}(x)\in \Z[x]$ to every closed interval $[s,t]$ in $X$. The product of $\cI$ is defined via
\[
(ab)_{st}(x) = \sum_{s\leq w\leq t} a_{sw}(x)\, b_{wt}(x) \qquad \text{ for every $s\leq t$ in $X$}.
\]


An important interpretation of the incidence algebra is that it can be realized as a subalgebra of the algebra of $|X|\times |X|$ matrices with polynomial entries. 
Indeed, to each element $a\in \cI(X)$ we may associate a matrix $A=(a_{st})_{s,t\in X}$, in which case the product in the incidence algebra corresponds to matrix multiplication. 
In particular, this shows that the incidence algebra is associative but, in general, non-commutative, with multiplicative identity given by
\[
\delta_{st} =
\begin{cases}
1 & \text{if } s = t,\\
0 & \text{if } s < t.
\end{cases}
\]
Choosing a linear extension of the partial order $\le$ to index the rows and columns, the matrix associated to any element $a\in \cI(X)$ is upper triangular. 
This yields the following well-known characterization of the invertible elements of $\cI(X)$.

\begin{lemma}\label{lem:invertible}
    An element $a\in \cI(X)$ admits a two-sided inverse $a^{-1}\in \cI(X)$ if and only if $a_{ss}=\pm1$ for every $s\in X$.
\end{lemma}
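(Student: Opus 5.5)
The plan is to use the matrix realization just described. Fix a linear extension $s_1,\dots,s_N$ of $\le$ on $X$, and send $a\in\cI(X)$ to the $N\times N$ matrix $A$ with entries $A_{ij}=a_{s_is_j}(x)$ when $s_i\le s_j$, and $A_{ij}=0$ otherwise. I would first check that this map is an injective ring homomorphism into $M_N(\Z[x])$. The key point is that the incidence product $\sum_{s\le w\le t}a_{sw}b_{wt}$ agrees with the matrix product: any index $w$ outside $[s,t]$ contributes zero, because either $a_{sw}$ or $b_{wt}$ vanishes there. Since the ordering is a linear extension, the image consists of upper triangular matrices whose diagonal entries are the $a_{ss}$. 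The image is exactly the matrices supported on the pairs $(i,j)$ with $s_i\le s_j$.

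\emph{Necessity.} Suppose $ab=\delta$. Reading off the $(s,s)$ entry gives $a_{ss}b_{ss}=1$ in $\Z[x]$. The units of $\Z[x]$ are exactly $\pm1$, since degrees add and the leading coefficients multiply. Hence $a_{ss}=\pm1$.

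\emph{Sufficiency.} If every $a_{ss}=\pm1$, then $\det A=\prod_s a_{ss}=\pm1$ is a unit in $\Z[x]$. So $A^{-1}=\pm\operatorname{adj}(A)$ exists in $M_N(\Z[x])$ and is a two-sided inverse. It remains to show that $A^{-1}$ lies in the image, that is, that its $(i,j)$ entry vanishes unless $s_i\le s_j$.

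This support condition is the only step that is not immediate, and I would handle it directly. I would construct a right inverse $b$ by recursion on the length of the interval:
\[
b_{ss}=a_{ss}^{-1},\qquad b_{st}=-a_{ss}^{-1}\sum_{s<w\le t}a_{sw}b_{wt}\quad(s<t).
\]
This is well defined because each $[w,t]$ appearing on the right is strictly shorter than $[s,t]$, and it lies in $\cI(X)$ by construction. The symmetric recursion gives a left inverse $c$. Associativity then forces $c=c(ab)=(ca)b=b$, so this common element is a two-sided inverse. This also shows $b$ corresponds to $A^{-1}$, which confirms the support claim.
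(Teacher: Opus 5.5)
Your proof is correct and takes the same route as the paper. The paper simply realizes $\cI(X)$ as upper-triangular matrices over $\Z[x]$ via a linear extension and reads off that the diagonal entries must be units of $\Z[x]$, i.e.\ $\pm1$. Your recursive construction of left and right inverses also makes explicit a point the paper leaves implicit: the inverse respects the poset support and hence lies in $\cI(X)$. This makes your determinant/adjugate step unnecessary.
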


Consider a grading $\rho\colon X\to \Z$ compatible with the partial order, that is, $\rho(s)\leq \rho(t)$ whenever $s\leq t$. Denote $\rho_{st}= \rho(t)-\rho(s)$ for any $s\leq t$ in $X$. We define a subalgebra $\cI_\rho(X)$ of the incidence algebra $\cI(X)$ by
\[
\cI_{\rho}(X) = \left\{a\in \cI(X) \,|\, \deg a_{st}(x) \leq \rho_{st} \text{ for all $s\leq t$ in $X$}\right\}.
\]

We define an involution $a\mapsto a^{\rev}$ on $\cI_{\rho}(X)$ via\footnote{In \cite{Proudfoot}, $\f^{\rev}$ is denoted by $\bar{\f}$ instead. }
\[
  \left(a^{\rev}\right)_{st}(x) = x^{\rho_{st}}\, a_{st}(x^{-1}).
\]
One can show that the above involution is compatible with multiplication, in the sense that $(ab)^{\rev}=a^{\rev}b^{\rev}$, and it commutes with inversion: $(a^\rev)^{-1}=(a^{-1})^\rev$ whenever $a\in \cI(X)$ is invertible.

A central notion in Kazhdan–Lusztig–Stanley theory is the concept of an $(X,\rho)$-kernel.

\begin{definition}
    An element $\kappa\in \cI_\rho(X)$ is called an $(X,\rho)$-kernel if $\kappa_{ss}=1$ for all $s\in X$ and
    \[
    \kappa^{-1}=\kappa^\rev.
    \]
\end{definition}

For any graded poset $X$ one can define its \emph{characteristic kernel}. First recall, the definition of the \emph{M\"obius function} of $X$:
    \[ \mu_{st} = \begin{cases}
        1 & \text{ if $s=t$},\\
        - \sum_{s\leq w < t} \mu_{sw} & \text{ if $s < t$}.
    \end{cases}\]
 Then the characteristic kernel $\chi$ is defined by 
    \[ 
    \chi_{st}(x) = \sum_{s\leq w\leq t} \mu_{sw}\,x^{\rho_{wt}}.
    \]
For any closed interval $[s,t]$ in $X$. Alternatively, we can define $\chi$ via
\[
\chi = \mu\cdot \zeta^{\rev} = \zeta^{-1}\cdot \zeta^{\rev},
\]
where $\zeta_{st} = 1$ for all $s\leq t$.
More generally, for every invertible element $a\in \cI_\rho(X)$, the element $\kappa =a^{-1}a^\rev$ is a kernel. Indeed, we have $\kappa_{ss}=1$ for any $s\in X$, and
\[
\kappa^{-1}= (a^{-1}a^\rev)^{-1}= (a^\rev)^{-1} a = (a^{-1})^\rev (a^\rev)^\rev = (a^{-1}a^\rev)^\rev = \kappa^\rev.
\]
 Stanley proved in \cite[Theorem~6.5]{stanley1992subdivisions} that all $(P,\rho)$-kernels arise in this way.

\begin{theorem}\label{thm:kls-functions}
    Let $\kappa\in \cI_{\rho}(X)$ be a $(X,\rho)$-kernel. There exists a unique element $f\in \cI(X)$ satisfying the following properties:
    \begin{enumerate}
        \item $f_{ss}(x) = 1$ for all $s\in X$.
        \item  $\deg f_{st}(x) < \frac{1}{2} \rho_{st}$ for all $s < t$.
        \item  $f^{\rev} = \kappa\cdot f$.
    \end{enumerate}
    Similarly, there exists a unique element $g\in \cI(X)$ satisfying the following properties:
    \begin{enumerate}
        \item[(1')] $g_{ss}(x) = 1$ for all $s\in X$.
        \item[(2')]  $\deg g_{st}(x) < \frac{1}{2} \rho_{st}$ for all $s < t$.
        \item[(3')]  $g^{\rev} = g \cdot \kappa$.
    \end{enumerate}
\end{theorem}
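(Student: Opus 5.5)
We prove existence and uniqueness of $f$ by induction on the length $\rho_{st}$ of the interval $[s,t]$, constructing $f_{st}$ once $f_{wt}$ is known for all $s<w\leq t$. The statement for $g$ follows by the same argument applied to the dual poset. Reversing the order exchanges left and right multiplication in $\cI(X)$, and $\kappa$ remains a kernel for the dual grading. So we focus on $f$.

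Fix $s<t$ and assume $f_{wt}$ has been defined for all $s<w\leq t$ with properties (1),(2), and with (3) holding on every interval $[w,t]$ with $w>s$. Since $\kappa_{ss}=1$, condition (3) on $[s,t]$ reads
\[
x^{\rho_{st}}f_{st}(x^{-1}) - f_{st}(x) = \sum_{s<w\leq t}\kappa_{sw}(x)f_{wt}(x) =: h_{st}(x),
\]
where $h_{st}$ is already determined. The key point is that $h_{st}$ is antisymmetric: $x^{\rho_{st}}h_{st}(x^{-1}) = -h_{st}(x)$.

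Granting antisymmetry, write $h_{st}=\sum_i c_ix^i$ with $c_{\rho_{st}-i}=-c_i$. Note that $\deg h_{st}\le\rho_{st}$ since $\kappa,f\in\cI_\rho$. Antisymmetry forces the middle coefficient to vanish when $\rho_{st}$ is even. Define $f_{st}=-\sum_{i<\rho_{st}/2}c_ix^i$. Then $x^{\rho_{st}}f_{st}(x^{-1})$ supplies exactly the terms of degree $>\rho_{st}/2$, namely $-\sum_{i<\rho_{st}/2}c_ix^{\rho_{st}-i}=\sum_{j>\rho_{st}/2}c_jx^j$. Hence the displayed equation holds and the degree bound (2) is satisfied. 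Uniqueness comes from the same computation: if $f_{st}$ has degree $<\rho_{st}/2$, the monomials of $f_{st}$ and of its reversal do not overlap, so $f_{st}$ is forced to be minus the low-degree part of $h_{st}$.

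The main obstacle is proving antisymmetry of $h_{st}$, and we would do it with incidence-algebra identities rather than by coefficients. Let $\tilde f$ agree with $f$ on all intervals $[w,t]$ with $w>s$ and set $\tilde f_{st}=0$. On the interval $[s,t]$ we have $h=(\kappa\tilde f)_{st}$, computed entrywise in column $t$. Apply $\rev$ and use $(\kappa\tilde f)^\rev=\kappa^\rev\tilde f^\rev=\kappa^{-1}\tilde f^\rev$. By the induction hypothesis, $\tilde f^\rev_{wt}=(\kappa f)_{wt}$ for $w>s$, and $\tilde f^\rev_{st}=0$. Substituting gives
\[
h^\rev = \sum_{s\le w\le t}(\kappa^{-1})_{sw}\tilde f^\rev_{wt} = \sum_{s<w\le t}(\kappa^{-1})_{sw}\sum_{w\le u\le t}\kappa_{wu}\tilde f_{ut}.
\]
Exchange the order of summation and use $\kappa^{-1}\kappa=\delta$. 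The terms with $w>s$ then collapse, since $\sum_{s<w\le u}(\kappa^{-1})_{sw}\kappa_{wu}=\delta_{su}-\kappa_{su}$. This yields $h^\rev_{st}=\sum_{s<u\le t}(-\kappa_{su})\tilde f_{ut}=-h_{st}$, as required. One must track that the $u=s$ term vanishes because $\tilde f_{st}=0$, and check that reversal of each product uses the correct $\rho_{st}=\rho_{sw}+\rho_{wt}$ splitting. These bookkeeping points are where care is needed.
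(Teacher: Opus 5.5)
Your proof is correct, and it is the standard argument of Stanley and Proudfoot. The paper states this theorem without proof and cites those sources. The argument rewrites (3) on $[s,t]$ as $f^{\rev}_{st}-f_{st}=\sum_{s<w\le t}\kappa_{sw}f_{wt}$, proves the right-hand side antisymmetric using $\kappa^{\rev}=\kappa^{-1}$, splits it into low- and high-degree halves, and obtains $g$ by passing to the dual poset.

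One small adjustment: the paper only assumes a weak grading ($\rho(s)\le\rho(t)$ for $s\le t$). So $\rho_{wt}=\rho_{st}$ can occur for $s<w$, and you should induct on the interval $[s,t]$ itself (e.g.\ on its cardinality, or downward in $s$ for fixed $t$) rather than on $\rho_{st}$. This is exactly the dependency you already describe.
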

Elements $f, g\in \cI(X)$  are respectively called right and left Kazhdan-Lusztig-Stanley functions associated to $\kappa$. The following lemma is a standard fact about kernels.
\begin{lemma}[{\cite[Lemma 3.1]{ferroni2024chow}}]
 Let $\kappa$ be a $(X,\rho)$-kernel. Then, for every $s < t$ in $X$, the polynomial $\kappa_{st}(x)$ is divisible by $x - 1$. 
\end{lemma}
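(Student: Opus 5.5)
The plan is to evaluate the kernel identity at $x=1$ and show that $\kappa_{st}(1)=0$ for all $s<t$. Divisibility by $x-1$ is equivalent to vanishing at $x=1$, since $\kappa_{st}\in\Z[x]$ and $x-1$ is monic. Evaluation at $x=1$ is a ring homomorphism $\Z[x]\to\Z$. Applying it entrywise therefore gives an algebra homomorphism $\cI(X)\to\cI(X)\otimes_{\Z[x]}\Z$, sending $a$ to the function $a(1)$ on intervals. The key observation is that reversal becomes trivial at $x=1$:
\[
(a^\rev)_{st}(1)=1^{\rho_{st}}a_{st}(1)=a_{st}(1).
\]
So $a^\rev(1)=a(1)$ for every $a\in\cI_\rho(X)$.

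Apply this to the defining identity $\kappa^{-1}=\kappa^\rev$, rewritten as $\kappa\cdot\kappa^\rev=\delta$. Evaluating at $x=1$ gives $k\cdot k=\delta$ in the $\Z$-valued incidence algebra, where $k=\kappa(1)$. Moreover $k_{ss}=1$, so $k=\delta+n$ with $n$ supported on intervals $s<t$. In a linear extension, $n$ is strictly upper triangular, hence nilpotent. Expanding gives $\delta=k^2=\delta+2n+n^2$, that is, $n(2\delta+n)=0$. Since $2\delta+n$ is upper triangular with diagonal entries $2$, it is invertible over $\mathbb{Q}$ (a torsion-free setting suffices). Hence $n=0$, so $\kappa_{st}(1)=0$ for every $s<t$.

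For a more hands-on version, the same conclusion follows by induction on $\rho_{st}$, or on the length of $[s,t]$. Write out the $(s,t)$ entry of $\kappa\kappa^\rev=\delta$ at $x=1$:
\[
\sum_{s\le w\le t}\kappa_{sw}(1)\kappa_{wt}(1)=0.
\]
By induction, all terms with $s<w<t$ vanish. The remaining terms give $2\kappa_{st}(1)=0$, and therefore $\kappa_{st}(1)=0$ since we are working in $\Z$.

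The only subtle point is justifying that the inverse and the reversal interact correctly with evaluation. Since we use only the identity $\kappa\kappa^\rev=\delta$ and the fact that evaluation respects products, there is no real obstacle. The one step needing care is the division by $2$, which is valid because $\Z$ is torsion-free.
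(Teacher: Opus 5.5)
Your proof is correct. The paper gives no argument of its own here; it only quotes the result from \cite[Lemma 3.1]{ferroni2024chow}. Your route is the natural one: specialize $\kappa\kappa^{\rev}=\delta$ at $x=1$, where reversal acts trivially. Then $\kappa(1)$ is unipotent with $\kappa(1)^2=\delta$, and so $\kappa(1)=\delta$.

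One small remark concerns the inductive variant. In this paper $\rho$ is only weakly order-preserving, so induction on $\rho_{st}$ alone fails when $\rho_{st}=0$ and $[s,t]$ has interior elements. Induct on the length of $[s,t]$ instead, as you also suggest, or simply use your nilpotence argument, which needs no induction.
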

The next two definitions are introduced in \cite[Section 3]{ferroni2024chow}. 
\begin{definition}
    The reduced kernel $\overline{\kappa}
\in \cI_\rho(X)$ associated to $(X,\rho)$-kernel $\kappa$ is defined via
    \[ \overline{\kappa}_{st}(x) 
    = \begin{cases}
        \frac{1}{x-1}\, \kappa_{st}(x) & \text{ if $s < t$}\\
        -1 & \text{ if $s = t$}.
    \end{cases}\]
\end{definition}


\begin{definition}\label{def:chow-function}
    Let $\kappa$ be a $(X,\rho)$-kernel. We define the Chow polynomial associated to $\kappa$, or the $\kappa$-Chow polynomial, as the element $\HH\in \mathcal{I}_{\rho}(X)$ defined by
    \[ 
    \HH = - \left(\overline{\kappa}\right)^{-1}.
    \]
\end{definition}
Note that, by Lemma~\ref{lem:invertible}, the reduced kernel is an invertible element of the incidence algebra, and therefore Definition~\ref{def:chow-function} is well defined.
Furthermore, we note that Definition~\ref{def:chow-function} is equivalent to either of the following properties (see \cite[Subsection 3.2]{ferroni2024chow}).  For all $s<t$ in $P$,
    \begin{align}
        \HH_{st}(x) &= \sum_{s < w \leq t} \overline{\kappa}_{sw}(x)\, \HH_{wt}(x)\qquad \\
        \HH_{st}(x) &= \sum_{s\leq w < t} \HH_{sw}(x)\, \overline{\kappa}_{wt}(x).
    \end{align}

Moreover, authors of \cite{ferroni2024chow} show that the Chow polynomials have nice properties.
\begin{theorem}[{\cite[Theorem 1.1]{ferroni2024chow}}]\label{thm:non-negative unimodal}
        Let $\kappa$ be a $(P,\rho)$-kernel. If the right KLS function $f$ or the left KLS function $g$ is non-negative, then the Chow polynomial $\HH$ is non-negative and unimodal.
\end{theorem}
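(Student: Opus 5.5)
The approach is to write each $\HH_{st}$ as a nonnegative combination of polynomials that are palindromic about the same center and unimodal, and then use the fact that such a combination is again palindromic and unimodal. By left–right symmetry (pass to the dual poset, which exchanges $f$ and $g$ and transposes the incidence algebra), it suffices to treat the case where the right KLS function $f$ is nonnegative.

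\emph{Step 1: palindromicity.} First I would show that $\HH_{st}$ is palindromic with center $\tfrac12(\rho_{st}-1)$ for $s<t$, i.e.\ $x^{\rho_{st}-1}\HH_{st}(x^{-1})=\HH_{st}(x)$. From $\kappa^{-1}=\kappa^{\rev}$ one gets
\[
\overline{\kappa}^{\rev}_{st}(x)=x^{\rho_{st}}\frac{\kappa_{st}(x^{-1})}{x^{-1}-1}=-x\,\overline{\kappa}'_{st}(x),
\]
where $\overline{\kappa}'$ is the reduced version of $\kappa^{-1}$. Inverting the matrix identity $\kappa\kappa^{-1}=\delta$ entrywise, after dividing by $x-1$, should give $\overline{\kappa}^{-1}$ as a rescaled reversal of itself. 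The expected outcome is $\HH^{\rev}_{st}=x\,\HH_{st}$, which is exactly the claimed palindromicity. This is a routine manipulation in $\cI_\rho(X)$.

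\emph{Step 2: expansion in terms of $f$.} Next I would substitute Stanley's factorization $\kappa=f^{\rev}f^{-1}$, which follows from property (3) of Theorem~\ref{thm:kls-functions}, into $\HH=-\overline{\kappa}^{-1}$. The target is a formula of the shape
\[
\HH_{st}(x)=\sum_{s\le w<t} f_{sw}(x)\,x^{a_{sw}}\,E_{wt}(x)+(\text{mirror terms}),
\]
with two features. First, each $E_{wt}$ is a universal polynomial depending only on the grading, of the type appearing for the Boolean or characteristic kernel, such as an Eulerian or augmented-Chow polynomial of a uniform piece. Second, each summand should be palindromic about $\tfrac12(\rho_{st}-1)$. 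Such a term is produced by pairing $f_{sw}$, whose degree is below $\tfrac12\rho_{sw}$, with its reversed copy $x^{\rho_{sw}}f_{sw}(x^{-1})$. This is where the degree bound (2) is used: the truncation $f\mapsto f^{\rev}-f$ splits cleanly into a low part and a high part.

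\emph{Step 3: unimodality.} Finally, I would show that each building block $x^{a}E(x)+x^{b}E^{\rev}(x)$ is unimodal, where $E$ is taken from the characteristic (Boolean) situation and so is $\gamma$-positive. Once $f\ge 0$, $\HH_{st}$ is then a nonnegative sum of nonnegative, symmetric, unimodal polynomials sharing a common center. Nonnegativity and unimodality of $\HH_{st}$ follow.

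The main obstacle is Step 2: finding the precise identity that writes $-\overline{\kappa}^{-1}$ through $f$ with manifestly symmetric, unimodal pieces. My first attempt would be to telescope $\overline{\kappa}=(f^{\rev}-f)f^{-1}/(x-1)$ entrywise and to invert using a geometric-series expansion along chains. I would then group the chain sums by their first step, where the $f$-factor enters, and check the proposed form on Boolean intervals before generalizing.
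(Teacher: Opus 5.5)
The paper does not prove this statement; it is quoted from \cite{ferroni2024chow}, so your proposal has to be judged on its own. Your reduction to $f\ge 0$ by passing to the dual poset is fine, and the palindromicity you claim in Step~1 is correct. The gap is Step~2. It carries the entire content of the theorem, you leave it open, and the shape you aim for cannot exist, because $\HH$ is not linear in $f$. Any $f$ satisfying (1) and (2) of Theorem~\ref{thm:kls-functions} is the right KLS function of the kernel $f^{\rev}f^{-1}$, so the values of $f$ on different intervals are independent parameters. Already for a chain $s<w_1<w_2<t$ one computes
\[
\HH_{st}=\frac{f^{\rev}_{st}-f_{st}}{x-1}+x\,f_{sw_1}f_{w_1t}.
\]
No identity $\HH_{st}=\sum_w f_{sw}x^{a_{sw}}E_{wt}+(\text{mirror terms})$ with $E_{wt}$ depending only on the grading can produce the product $f_{sw_1}f_{w_1t}$. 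The Eulerian-type, $\gamma$-positive blocks of Step~3 are therefore not the relevant ones. Your fallback, a geometric series for $-\overline{\kappa}^{-1}$ with $\overline{\kappa}$ written through $f^{-1}$, does not give manifest positivity either. The reason is that $f^{-1}$ has entries of both signs even when $f\ge 0$: for the Boolean lattice with kernel $(x-1)^{\rho}$ one has $f=\zeta$ and $f^{-1}=\mu$.

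Your remark that the degree bound splits $f^{\rev}-f$ into a low and a high part is the right ingredient. What is missing is to multiply by $f$ instead of $f^{-1}$. Let $N$ be the off-diagonal part of $\overline{\kappa}$, so that $\kappa=\delta+(x-1)N$ and $\HH=(\delta-N)^{-1}$, i.e.\ $\HH-\delta=\HH N$. Then $f^{\rev}=\kappa f$ says $Nf=\tau$, where $\tau_{st}=(f^{\rev}_{st}-f_{st})/(x-1)$. The identity $(\HH-\delta)f=\HH\tau$ then reads, for $s<t$,
\[
\HH_{st}=\frac{f^{\rev}_{st}-f_{st}}{x-1}+\sum_{s<w<t}\HH_{sw}\,\frac{f^{\rev}_{wt}-x f_{wt}}{x-1}.
\]
If $f_{uv}=\sum_{i<\rho_{uv}/2}a_ix^i$, the two quotients equal $\sum_i a_i(x^i+\dots+x^{\rho_{uv}-i-1})$ and $\sum_i a_i(x^{i+1}+\dots+x^{\rho_{uv}-i-1})$. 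For $a_i\ge 0$ these are nonnegative, palindromic and unimodal, with centers $(\rho_{uv}-1)/2$ and $\rho_{uv}/2$ respectively. By induction on $\rho_{st}$, every summand is nonnegative, symmetric and unimodal with the common center $(\rho_{st}-1)/2$. Here one uses the fact that products of nonnegative symmetric unimodal polynomials are symmetric unimodal. This gives nonnegativity and unimodality, and palindromicity comes for free, so your Step~1 becomes unnecessary.
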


\begin{theorem}[{\cite[Theorem 1.4]{ferroni2024chow}}]\label{thm:gammapos}
           Let $P$ be any Cohen--Macaulay poset. The $\chi$-Chow polynomial of $P$ is $\gamma$-positive.
\end{theorem}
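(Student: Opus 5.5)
The plan is to reduce $\gamma$-positivity of $\HH_{st}$ to nonnegativity of the flag $h$-vector of $[s,t]$, which holds for Cohen--Macaulay posets. The steps are: first an explicit chain formula for $\HH$; then a rewriting over rank sets with flag $h$-vector coefficients; then $\gamma$-positivity of each coefficient polynomial. Since every interval of a Cohen--Macaulay poset is again Cohen--Macaulay, it suffices to treat a single interval $[s,t]$ with $n=\rho_{st}$. So we may assume $X$ is bounded with $\hat 0=s$, $\hat 1=t$.

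\textbf{Step 1 (chain formula for $\HH$).} Write $\overline{\chi}=-\delta+\eta$, where $\eta_{uw}=\chi_{uw}/(x-1)$ for $u<w$. Then $-\overline{\chi}=\delta-\eta$, so inverting gives the Neumann series
\[
\HH=(\delta-\eta)^{-1}=\sum_{k\ge 0}\eta^k .
\]
Hence $\HH_{st}$ is a sum over chains $s=w_0<w_1<\cdots<w_k=t$ of $\prod_i \eta_{w_{i-1}w_i}$. Substitute $\chi_{uw}=\sum_{u\le z\le w}\mu_{uz}x^{\rho_{zw}}$ and expand. Collecting terms by the set of ranks $S\subseteq[n-1]$ of the chain vertices gives
\[
\HH_{st}(x)=\sum_{S\subseteq[n-1]}\alpha(S)\,A_S(x),
\]
where $\alpha$ is the flag $f$-vector of $[s,t]$. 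Here each $A_S(x)$ depends only on $n$ and $S$, because of the way the M\"obius values enter. If keeping track of $\mu$ becomes messy, I would first verify the formula on Boolean lattices and chains, then obtain the general case by linearity in the flag $f$-vector.

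\textbf{Step 2 (passage to the flag $h$-vector).} Apply inclusion--exclusion $\alpha(S)=\sum_{T\subseteq S}\beta(T)$ to get
\[
\HH_{st}(x)=\sum_{T\subseteq[n-1]}\beta(T)\,B_T(x),\qquad B_T=\sum_{S\supseteq T}A_S .
\]
For a Cohen--Macaulay poset, $\beta(T)\ge 0$, since it is the reduced Betti number of the rank-selected subposet. So it suffices to show that each $B_T$ is palindromic with centre $(n-1)/2$ and $\gamma$-positive. Palindromicity should come directly from the kernel identity $\chi^{-1}=\chi^{\rev}$, which makes $\HH$ symmetric of degree $n-1$. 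I would compare $B_T$ with the model cases: for Boolean lattices, $\beta(T)$ counts permutations with descent set $T$; for chains, only $\beta(\emptyset)$ is nonzero. The expected answer is that $B_T$ is the contribution of a single permutation with descent set $T$ to a binomial-Eulerian-type polynomial. Concretely, $B_T$ should be a product over the maximal runs of $T$ of polynomials of the form $x^{a}(1+x)^{b}$, or of $1+x+\dots+x^{m}$.

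\textbf{Step 3 (main obstacle).} The hard step is showing that each $B_T$ is individually $\gamma$-positive. I would try a Foata--Strehl (valley-hopping) style group action on the terms of $B_T$, indexed by the chains/compositions appearing in Step 1. The aim is to produce orbits whose generating functions are $x^{j}(1+x)^{n-1-2j}$. If $B_T$ does not split cleanly term by term, the fallback is to group the $T$'s by their ``peak set'' and show that the grouped sums $\sum_{T\,:\,\mathrm{peak}(T)=\Pi}B_T$ are $\gamma$-positive. This still suffices, provided $\beta$ can be rearranged with nonnegative coefficients along the same grouping, which Cohen--Macaulayness should again supply via rank selection.
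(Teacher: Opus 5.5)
\textbf{Verdict: genuine gap.} Your framework is sound: restrict to an interval, write $\HH_{st}=\sum_T\beta(T)B_T$ with universal coefficients, and use $\beta\ge 0$ for Cohen--Macaulay posets to reduce the theorem to $\gamma$-positivity of each $B_T$. (The paper does not prove this statement; it only quotes it.) The trouble is that all the content of the theorem sits in the coefficients $B_T$, and you never compute them.

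\textbf{The predicted shape of $B_T$ is wrong.} In rank $2$, the chain ($\beta=(1,0)$) and the Boolean lattice ($\beta=(1,1)$) both have $\HH=1+x$, so $B_{\{1\}}=0$. A product of factors $x^a(1+x)^b$ or $1+x+\cdots+x^m$ never vanishes, so it cannot match this.

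\textbf{The other steps do not fill the hole.}
Your fallback of checking chains and Boolean lattices and then extending ``by linearity'' cannot determine $2^{n-1}$ unknown coefficients from two flag vectors. Palindromicity of $\HH$ alone also says nothing about the individual $B_T$ without a spanning argument. Finally, the valley-hopping action in Step~3 aims at a problem that does not arise at the poset level. The Foata--Strehl grouping only appears after you specialize $\beta$ to descent-set counts in the Boolean lattice.

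\textbf{The missing idea: eliminate the M\"obius function before expanding, instead of expanding it over chains.} For $u<w$ put $\tau_{uw}=x+x^2+\cdots+x^{\rho_{uw}-1}$, and set $\tau_{uu}=0$. Since $\chi=\mu\zeta^{\rev}$, you get
\[
-\overline{\chi}=\frac{x\delta-\chi}{x-1}=\mu\,\frac{x\zeta-\zeta^{\rev}}{x-1}=\mu(\delta-\tau),
\qquad\text{hence}\qquad
\HH=(\delta-\tau)^{-1}\zeta=\sum_{j\ge 0}\tau^j\zeta .
\]
Grouping the terms by chains $s=w_0<\cdots<w_k=t$ gives
\[
A_S=\Bigl(\prod_{i<k}(x+\cdots+x^{a_i-1})\Bigr)(1+x+\cdots+x^{a_k-1}),
\]
where $(a_1,\dots,a_k)$ is the composition of $n$ with partial-sum set $S$.

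\textbf{Finishing the computation.}
\begin{enumerate}
\item[(i)] $A_S=0$ unless $S$ is a stable subset of $\{2,\dots,n-1\}$, meaning no two consecutive elements. Consequently $B_T=\sum_{S\supseteq T}A_S=0$ for every other $T$.
\item[(ii)] For stable $T=\{t_1<\dots<t_m\}$, the sum factors over the segments cut out by $\{0\}\cup T\cup\{n\}$.
\item[(iii)] A segment of length $d$ ending at an element of $T$ contributes the coefficient of $z^d$ in $xz^2/(1-(1+x)z)$, namely $x(1+x)^{d-2}$.
\item[(iv)] The final segment contributes the coefficient of $z^d$ in $z/(1-(1+x)z)$, namely $(1+x)^{d-1}$.
\end{enumerate}
Both generating functions follow from $(1-z)(1-xz)-xz^2=1-(1+x)z$. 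Therefore $B_T=x^{m}(1+x)^{n-1-2m}$, and
\[
\HH_{st}(x)=\sum_{T}\beta(T)\,x^{|T|}(1+x)^{n-1-2|T|},
\]
where $T$ ranges over stable subsets of $\{2,\dots,n-1\}$. Together with $\beta\ge 0$ this proves the theorem. Each $B_T$ is either $0$ or a single $\gamma$-basis element, so no group action is needed.
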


\subsection{Kernel for the poset $\cO$}
Let $P$ be a polytope and let $\ell$ be a linear function on $P$ such that $\ell$ is nonconstant on every positive dimensional face. 
Throughout this section, we work under Assumption \ref{ass:equivalent}, and we do not assume that $P$ is simple.
We will be working with two posets associated to $P$: the poset of vertices $\cO$ as discussed before and the face poset of $P$. 
We use $\prec$ and $\preceq$ to denote the order relations in the vertex poset, and $<$ and $\le$ to denote the order relations in the face poset.

By Remark \ref{rem:O is subposet of face poset}, the vertex poset can be regarded as a subposet of the face poset. 
Given two vertices $v\preceq w$ of $P$, we denote the face $F^-(w)\cap F^+(v)$ by $F_{[v, w]}$. Given a face $F$ of $P$, we use $\max F$ and $\min F$ to denote the maximal and minimal vertices in $F$, respectively. 
For any two vertices $v,w$ of $P$ let us define the set 
 \[
\mathcal{S}_{vw}\coloneqq \{F\mid v=\min F, w=\max F\}.
\]
Since the intersection of faces is a face, we have the following lemma. 
\begin{lemma}
    Let $v\prec w$ be two vertices of P. The set of faces 
$\mathcal{S}_{vw}$
    contains a unique minimal face, which we denote by $F^{\min}_{[v,w]}$. Moreover, $\mathcal{S}_{vw}$ consists of all faces that is between $F_{[v, w]}^{\min}$ and $F_{[v, w]}$ in the face poset. 
\end{lemma}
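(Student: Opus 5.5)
The plan is to establish two facts. First, $\mathcal{S}_{vw}$ is nonempty and closed under intersection. Second, $\mathcal{S}_{vw}$ is convex in the face poset with top element $F_{[v,w]}$.

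For nonemptiness, since $v\prec w$ we have $\OO(v,w)$ by Theorem~\ref{thm:equivalences} (condition (5)). A witness face then has minimum $v$ and maximum $w$, so it lies in $\mathcal{S}_{vw}$. For closure under intersection, take $F,G\in\mathcal{S}_{vw}$. Then $F\cap G$ is a face containing both $v$ and $w$. On $F\cap G\subset F$, the functional $\ell$ is bounded below by $\ell(v)$ and above by $\ell(w)$, and both values are attained. By genericity of $\ell$, these extrema are attained uniquely at $v$ and $w$, so $F\cap G\in\mathcal{S}_{vw}$. Since the face poset is finite, the intersection of all members of $\mathcal{S}_{vw}$ is the unique minimal element $F^{\min}_{[v,w]}$.

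Next I show that every $F\in\mathcal{S}_{vw}$ satisfies $F^{\min}_{[v,w]}\le F\le F_{[v,w]}$. The first inequality holds by construction. For the second, $\ell$ attains its maximum on $F$ at $w$, so $F\subset F^-(w)$. Likewise $\ell$ attains its minimum on $F$ at $v$, so $F\subset F^+(v)$. Hence $F\subset F_{[v,w]}$, which is a face by Assumption~\ref{ass:irred}. I also need $F_{[v,w]}\in\mathcal{S}_{vw}$ itself. It contains $v$ and $w$ (as noted in the proof of $(1)\Rightarrow(8)$ in Theorem~\ref{thm:equivalences}), and it is contained in $F^-(w)$ and in $F^+(v)$. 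Since $F^-(w)$ is a face on which $\ell$ attains its maximum at $w$ (it is a union of such faces, and by irreducibility it is itself one of them), every vertex $u$ of $F^-(w)$ satisfies $\ell(u)\le\ell(w)$, so the maximum of $\ell$ on $F_{[v,w]}$ is at $w$. Symmetrically, the minimum of $\ell$ on $F_{[v,w]}$ is at $v$.

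For the converse, let $G$ be a face with $F^{\min}_{[v,w]}\le G\le F_{[v,w]}$. Then $v,w\in G$, and $G\subset F_{[v,w]}$ forces $\ell(v)\le\ell\le\ell(w)$ on $G$. Hence $\min G=v$ and $\max G=w$, so $G\in\mathcal{S}_{vw}$.

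The only subtle point is justifying that irreducibility makes $F^-(w)$ itself a face with maximum at $w$. Because $F^-(w)$ equals its own unique maximal member face, this follows directly from Assumption~\ref{ass:irred}. Everything else is routine.
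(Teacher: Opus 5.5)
Your proof is correct and follows the same idea as the paper, which only notes that $\mathcal{S}_{vw}$ is closed under intersection of faces; you also make explicit two things the paper leaves implicit, namely that $\mathcal{S}_{vw}$ is nonempty and that $F_{[v,w]}$ is its top element. Your ``subtle point'' is unnecessary: every point of $F^-(w)$ already lies in a face on which $\ell$ is maximized at $w$, so $\ell\le\ell(w)$ on $F^-(w)$ without irreducibility, which is needed only to know that $F_{[v,w]}$ is a face.
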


Next, we recall basic facts about Kazhdan-Lusztig-Stanley theory the face poset. For more details, see \cite[Example 2.14]{Proudfoot}) .
The face poset of a polytope always has a grading given by dimension of a face and a kernel defined by
\[
\lambda_{FG}=(x-1)^{\dim G-\dim F}.
\]
Given a polytope $P$, and any two faces $F\leq G$, there is an $f$-polynomial $\f_{FG}\in \Z[x]$, with degree at most $(\dim G-\dim F-1)/2$. These are the right Kazhdan-Lusztig-Stanley polynomials associated to the kernel $\lambda$. We use a different font to emphasize it is the $f$-polynomial of the polytope face poset, and we use the convention that $\dim \emptyset =-1$. 
Since the $f$-function satisfies Condition~(3) of Theorem~\ref{thm:kls-functions}, we have
\begin{equation}\label{eq_lambda}
    \f_{F_1F_2}^{\rev}=\sum_{F_1\leq F\leq F_2} \lambda_{F_1 F}\f_{F F_2}=\sum_{F_1\leq F\leq F_2}(x-1)^{\dim F-\dim F_1}\f_{F F_2}.
\end{equation}

Passing to the vertex subposet we define two families of functions: $g_{vw}\coloneqq \f_{w F_{[v,w]}}$ and $f_{vw}\coloneqq\f_{vF_{[v,w]}}$.
\begin{proposition}\label{prop:kernel}
    If $\CH(P)$ is simple, then 
    \begin{enumerate}
        \item the polynomials
        \[
        \kappa_{vw}\coloneqq \sum_{F\in \mathcal{S}_{vw}}(x-1)^{\dim F}
        \]
        define a kernel on the vertex poset;
        \item the polynomials $g_{vw}\coloneqq \f_{w F_{[v,w]}}$ and $f_{vw}\coloneqq\f_{vF_{[v,w]}}$ are the left and right Kazhdan-Lusztig-Stanley polynomials of the interval $[v, w]$ associated to the kernel $\kappa$.
    \end{enumerate}
\end{proposition}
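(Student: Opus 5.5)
The plan is to reduce the kernel property to a single identity of the form $f^{\rev}=\kappa f$, using the standard trick from Stanley's theory. Suppose $f\in\cI_\rho(\cO)$ is invertible (its diagonal entries are $\f_{vv}=1$) and satisfies $f^{\rev}=\kappa f$. Then $\kappa=f^{\rev}f^{-1}$, and therefore
\[
\kappa^{\rev}=f\,(f^{-1})^{\rev}=f\,(f^{\rev})^{-1}=\kappa^{-1}.
\]
Moreover $\kappa_{vv}=1$, since $\mathcal S_{vv}=\{v\}$. So part (1) follows once we know:
\begin{itemize}
\item $\kappa\in\cI_\rho(\cO)$;
\item $f_{vw}=\f_{vF_{[v,w]}}$ satisfies $f^{\rev}=\kappa f$.
\end{itemize}
The same identity, together with the degree bound, shows by the uniqueness in Theorem~\ref{thm:kls-functions} that $f$ is the right KLS function.

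For the degree bounds, Corollary~\ref{cor:gradthen graded} and Lemma~\ref{lem:dimensionsbehavenicely} give $\dim F_{[v,w]}=\rho(w)-\rho(v)=\rho_{vw}$. Every $F\in\mathcal S_{vw}$ lies in $F_{[v,w]}$, so $\deg\kappa_{vw}\le\rho_{vw}$. The face-poset interval $[v,F_{[v,w]}]$ has rank $\dim F_{[v,w]}$, so $\deg f_{vw}<\rho_{vw}/2$ for $v\prec w$. The same argument bounds $g$.

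For the identity, apply \eqref{eq_lambda} with $F_1=v$ and $F_2=F_{[v,w]}$:
\[
f^{\rev}_{vw}=\sum_{v\le F\le F_{[v,w]}}(x-1)^{\dim F}\,\f_{F\,F_{[v,w]}}.
\]
Every such $F$ lies in $F^+(v)$, so $\min F=v$. Grouping the faces by $u=\max F$ (so $v\preceq u\preceq w$ and $F\in\mathcal S_{vu}$), the right-hand side becomes $\sum_u\sum_{F\in\mathcal S_{vu}}(x-1)^{\dim F}\,\f_{F\,F_{[v,w]}}$. This equals $(\kappa f)_{vw}$ provided that
\[
\f_{F\,F_{[v,w]}}=\f_{u\,F_{[u,w]}}\qquad\text{for every }F\in\mathcal S_{vu}. \tag{$\ast$}
\]

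Proving $(\ast)$ is the main obstacle, and it is where simplicity of $\CH(P)$ enters. I would try to show that the face-poset interval factors as
\[
[F,F_{[v,w]}]\;\cong\;[F,F_{[v,u]}]\times[u,F_{[u,w]}].
\]
The map in one direction is $(G,H)\mapsto$ the face spanned by $G$ and $H$ from Lemma~\ref{lem:bendface}, which has dimension $\dim G+\dim H$. The inverse is $K\mapsto(K\cap F^-(u),\,K\cap F^+(u))$; it is well defined because of the skew-subspace decomposition of the cone of $P$ at $u$ used in the proof of Lemma~\ref{lem:bendface}. I expect checking that these maps are mutually inverse and order preserving to be the delicate part, since it requires controlling faces $K\supseteq F$ whose maximal vertex lies strictly above $u$.

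Given the factorization, condition (4) of Theorem~\ref{thm:simpleCH} says that the star of $F$ in $F_{[v,u]}=F^+(v)\cap F^-(u)$ is simple. Hence $[F,F_{[v,u]}]$ is Boolean, and the right KLS function of a Boolean interval for $\lambda$ is $1$. Right KLS functions are multiplicative on products of intervals, because the kernel $\lambda$ is multiplicative. Together these give $(\ast)$.

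For $g$, apply the whole argument to $(P,-\ell)$. This reverses the vertex poset, swaps $F^-$ and $F^+$, and turns $\kappa$ into the transposed kernel. The right KLS function of the reversed situation is $\f_{wF_{[v,w]}}=g_{vw}$. Under reversal, right KLS functions become left KLS functions, so this gives the relation $g^{\rev}=g\kappa$ and completes part (2).
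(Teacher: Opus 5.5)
Your proposal is correct and follows the paper's proof essentially step for step. You reduce to $f^{\rev}=\kappa f$ and $g^{\rev}=g\kappa$, deriving the kernel property directly from $\kappa=f^{\rev}f^{-1}$ where the paper instead cites \cite[Theorem 2.5]{Proudfoot}; you regroup the sum in \eqref{eq_lambda} by the maximal vertex $u$; and you prove $(\ast)$ by factoring the interval through the product decomposition of the cone at $u$, combined with condition (4) of Theorem~\ref{thm:simpleCH}. The step you flag as delicate is immediate, exactly as in the paper: the tangent cone of $F_{[v,w]}$ at $u$ is the product of the tangent cones of $F_{[v,u]}$ and $F_{[u,w]}$ at $u$, and the faces of a product of cones are precisely the products of faces.
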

\begin{proof}
    Clearly, $f_{vv}=g_{vv}=1$. Moreover, for any $v\prec w$, both $\deg f_{vw}, \deg g_{vw}<(\rk(w)-\rk(v))/2$. Thus, by \cite[Theorem 2.5]{Proudfoot}, it suffices to show that
    \begin{equation*}
        f_{vw}^{\rev}=\sum_{v\preceq u\preceq w} \kappa_{vu} f_{uw}
    \end{equation*}
    and
    \[
    g_{vw}^{\rev}=\sum_{v\preceq u\preceq w} g_{vu}\kappa_{uw}.
    \]
The two equations only differ by symmetry, so we only prove the first one. Replacing $P$ by the face $F_{[v, w]}$, we can also assume that $v=\hat{0}$ and $w=\hat{1}$. 
We start with the right-hand side:
\begin{equation}\label{eq_long}
    \sum_{\hat{0}\preceq u\preceq \hat{1}} \kappa_{\hat{0}u} f_{u\hat{1}}=\sum_{\hat{0}\preceq u\preceq \hat{1}} \left(\sum_{F\in \mathcal{S}_{\hat{0}u}}(x-1)^{\dim F}\right)\f_{u F_{[u, \hat{1}]}}=\sum_{\hat{0}\preceq u\preceq \hat{1}} \sum_{F\in \mathcal{S}_{\hat{0}u}}(x-1)^{\dim F}\f_{u F_{[u, \hat{1}]}}.
\end{equation}
\begin{lemma}
    For any $F\in \mathcal{S}_{\hat{0}u}$, $\f_{F P}=\f_{u F_{[u, \hat{1}]}}$. 
\end{lemma}
\begin{proof}[Proof of Lemma]
    By the proof of Lemma \ref{lem:facetsofCHPtwotypes}, we know that the cone of $P$ at $u$ is isomorphic to the product of the cone of $F_{[\hat{0}, u]}$ at $u$ and the cone of $F_{[u, \hat{1}]}$ at $u$. Thus, 
    \[
    \f_{u F_{[u, \hat{1}]}}=\f_{F_{[\hat{0}, u]}P}.
    \]
    Notice that the $f$-polynomial of a product of poset is equal to the product of the $f$-polynomial of the posets. Thus, by the above product structure, we also have that for $F\in \mathcal{S}_{\hat{0}u}$,
    \begin{equation}\label{eq_true}
            \f_{F P}=\f_{F F_{[\hat{0}, u]}} \f_{u F_{[u, \hat{1}]}}.
    \end{equation}
    By Theorem \ref{thm:simpleCH}, the star of $F$ within $F_{[\hat{0}, u]}$ is simple, and hence $\f_{F F_{[\hat{0}, u]}}=1$. Therefore, we obtain the desired equation of the lemma. 
\end{proof}
Now we can simplify the formula \eqref{eq_long} for the right-hand side of the desired equality:
\begin{align*}
    \sum_{\hat{0}\preceq u\preceq \hat{1}}\sum_{F\in \mathcal{S}_{\hat{0}u}}(x-1)^{\dim F} \f_{u F_{[u, \hat{1}]}} &=\sum_{\hat{0}\preceq u\preceq \hat{1}} \sum_{F\in \mathcal{S}_{\hat{0}u}}(x-1)^{\dim F} \f_{F P}\\
    &=\sum_{\hat{0}\leq F\leq P} (x-1)^{\dim F} \f_{F P}\\
    &=\f_{\hat{0}, P}^{\rev}\\
    &=f_{\hat{0}\hat{1}}^{\rev}
\end{align*}
where the first equality follows from the preceding lemma, the third equality follows from \eqref{eq_lambda}, and the last one follows from the definition of the $f$-polynomial $f_{\hat{0}\hat{1}}$. 
\end{proof}

\begin{remark}
    It is not hard to show that the kernel $\kappa$ could be alternatively defined using the kernel of the face pocet $\lambda$ by
    \[\kappa_{vw}=\sum_{\substack{F^-(v)\subset \tilde F\\ \max \tilde F=w}}\lambda_{F^-(v),\tilde F}.\]
\end{remark}

\subsection{Chow polynomials for the poset $\mathcal{O}$}
The main theorem of this section relates the Chow polynomial of the poset $\mathcal{O}$ with the $h$-polynomial of the dual of the monotone path polytope. For the definition of $h$-polynomial and for a comparison result, see \cite[Subsection 2.5 and Theorem 5.3]{ferroni2024chow}.

\begin{theorem}\label{thm:Chow poly is h of Chow}
    If Assumption \ref{ass:equivalent} holds, then the Chow polynomial $H_{vw}$ for the poset $\mathcal{O}$ with kernel $\kappa$ is the $h$-polynomial of the dual polytope $\CH\left(F^+(v)\cap F^-(w)\right)^*$. 
\end{theorem}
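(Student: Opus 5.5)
The plan is to show that both sides of Theorem~\ref{thm:Chow poly is h of Chow} satisfy the same recursion on intervals of $\mathcal{O}$ and have the same initial values, so they agree by induction on $\rho(w)-\rho(v)$. On the Chow side, Definition~\ref{def:chow-function} gives $\HH=-(\overline{\kappa})^{-1}$. Since $\overline{\kappa}_{ww}=-1$, we get $\HH_{ww}=1$, and for $v\prec w$ the first of the two equivalent recursions gives
\[
\HH_{vw}(x)=\sum_{v\prec u\preceq w}\overline{\kappa}_{vu}(x)\,\HH_{uw}(x),\qquad \overline{\kappa}_{vu}(x)=\sum_{F\in\mathcal S_{vu}}(x-1)^{\dim F-1}.
\]
So it suffices to prove that $h_{vw}(x):=h\bigl(\CH(F_{[v,w]})^*\bigr)(x)$ satisfies $h_{ww}=1$ and the same recursion. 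The initial value holds because $F_{[w,w]}=\{w\}$, so $\CH$ of it is a point by convention, and its dual has $h$-polynomial $1$.

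Next I express the $h$-polynomial of the dual through the faces of $\CH(F_{[v,w]})$. Set $Q=\CH(F_{[v,w]})$ and $d=\dim Q$. A face of $Q^*$ of dimension $i-1$ corresponds to a face of $Q$ of dimension $d-i$. Substituting this into $h(Q^*)(x)=\sum_i f_{i-1}(Q^*)(x-1)^{d-i}$ gives
\[
h(Q^*)(x)=\sum_{\emptyset\neq G\le Q}(x-1)^{\dim G}.
\]
This is the standard formula when $Q$ is simple, so $Q^*$ is simplicial, which is the setting in which $\kappa$ is a kernel (Proposition~\ref{prop:kernel}). If the paper's convention for $h$ of a non-simplicial polytope is the toric one, I would restrict to the case where $\CH(P)$ is simple, or use the comparison \cite[Theorem 5.3]{ferroni2024chow}. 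I expect this matching of conventions to be the only delicate point.

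Now I apply Theorem~\ref{thm:facesofCH} to the face $F_{[v,w]}$, which satisfies Assumption~\ref{ass:equivalent} by Proposition~\ref{prop:constructexamples}(3). The faces of $Q$ are the monotone chains of faces $(F_1,\dots,F_k)$ of $F_{[v,w]}$ running from $v$ to $w$, and such a chain has dimension $\sum_i(\dim F_i-1)$. Write $u_0=v\prec u_1\prec\dots\prec u_k=w$ for the successive endpoints, so that $F_i\in\mathcal S_{u_{i-1}u_i}$. I then need the sets $\mathcal S_{u_{i-1}u_i}$ computed inside $F_{[v,w]}$ to coincide with those computed in $P$. This holds because any face $F$ with $\min F=u$ and $\max F=u'$, where $v\preceq u\prec u'\preceq w$, lies in $F^+(u)\cap F^-(u')$. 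By the stratification property (Lemma~\ref{lem:incomingEdges} and its $O^+$ analogue), $F^+(u)\subseteq F^+(v)$ and $F^-(u')\subseteq F^-(w)$, so $F$ lies in $F_{[v,w]}$. Grouping the faces of $Q$ by their endpoint chains therefore gives
\[
h_{vw}(x)=\sum_{v=u_0\prec u_1\prec\cdots\prec u_k=w}\ \prod_{i=1}^{k}\overline{\kappa}_{u_{i-1}u_i}(x).
\]

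Finally I split off the first step $u_1=u$. The remaining chain $(F_2,\dots,F_k)$ ranges exactly over the monotone chains from $u$ to $w$, which by the previous step index the faces of $\CH(F_{[u,w]})$. The case $u=w$ corresponds to $k=1$ and contributes $\overline{\kappa}_{vw}\cdot h_{ww}$. Hence
\[
h_{vw}=\sum_{v\prec u\preceq w}\overline{\kappa}_{vu}\,h_{uw},
\]
which is the same recursion with the same initial values as $\HH$. Induction on $\rho(w)-\rho(v)$ then yields $\HH_{vw}=h_{vw}$.
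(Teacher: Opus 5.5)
Your proposal is correct and follows essentially the same route as the paper. Both arguments expand $h(\CH(Q)^*)=\sum_i f_i(\CH(Q))(x-1)^i$, enumerate the faces of $\CH$ via Theorem~\ref{thm:facesofCH}, and split off the first face $F_1$ to get $h_{vw}=\sum_{v\prec u\preceq w}\overline{\kappa}_{vu}h_{uw}$, which is exactly the identity $\overline{\kappa}\,\HH=-\delta$. Your explicit check that the sets $\mathcal S_{uu'}$ computed inside $F_{[v,w]}$ agree with those in $P$ fills in a step the paper leaves implicit in ``replacing $P$ by $F^+(u)\cap F^-(v)$''. Your worry about conventions is unfounded: the paper defines $h$ by the same $f$-vector formula you use.
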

\begin{proof}
    For any $u\leq v$ in the poset $\OO_P$, we denote by $H'_{uv}$ the $h$-polynomial of $\CH\left(F^+(u)\cap F^-(v)\right)^*$. By the definition of Chow polynomial, we need to show that in the incidence algebra
    \[
    H'=-(\overline{\kappa})^{-1}.
    \]
    Equivalently, we need to show that, for any $u\leq v$,
    \[
    \sum_{u\leq w\leq v} \overline{\kappa}_{uw}H'_{wv}=-\delta_{uv}
    \]
    where $\delta_{uv}=1$ if $u=v$; and $\delta_{uv}=0$ otherwise. The equation is obvious when $u=v$. Thus, replacing $P$ by $F^+(u)\cap F^-(v)$, it suffices to show
    \begin{equation}\label{eq_enough}
    \sum_{\hat{0}\leq w\leq \hat{1}} \overline{\kappa}_{\hat{0}w}H'_{w\hat{1}}=0
    \end{equation}
    for a positive-dimensional polytope satisfying Assumption \ref{ass:equivalent}. 
    
Let $Q$ be a polytope of dimension $d+1$ satisfying Assumption~\ref{ass:equivalent}.
By Theorem~\ref{thm:facesofCH}, the number of $i$-dimensional faces of $\CH(Q)$
is equal to the number of sequences of faces $(F_1,\dots,F_l)$ satisfying:
\begin{enumerate}
  \item the minimal vertex of $F_1$ is $\hat{0}$;
  \item the maximal vertex of $F_l$ is $\hat{1}$;
  \item the maximal vertex of $F_k$ equals the minimal vertex of $F_{k+1}$
        for all $1\le k\le l-1$;
  \item $\dim F_1+\cdots+\dim F_l-l=i$.
\end{enumerate}

    By definition, the $h$-polynomial of $\CH(Q)^*$ can be expressed as
    \[
    h_{\CH(Q)^*}
    =\sum_{i=0}^d f_{i-1}(\CH(Q)^*)(x-1)^{d-i}
    =\sum_{i=0}^d f_{d-i-1}(\CH(Q)^*)(x-1)^{i}=\sum_{i=0}^d f_{i}(\CH(Q))(x-1)^{i}, 
    \]
    where $f_{i}$ denotes the number of $i$-dimensional faces of a polytope.\footnote{The convention is that for any $d$-dimensional polytope, it has one $(-1)$-dimensional face (the empty face) and one $d$-dimensional face.} By the above arguments, $f_{i}(\CH(Q))$ is equal to the number of sequences of faces $(F_1, \dots, F_l)$ satisfying the above properties (1-4). Therefore, 
    \begin{equation}\label{eq_sum1}
    h_{\CH(Q)^*}=\sum_{(F_1, \dots, F_l)}(x-1)^{(\sum_{1\leq j\leq l}\dim F_j)-l}
    \end{equation}
    where the sum is over all sequences of faces $(F_1, \dots, F_l)$ satisfying the above properties (1-3). Denote the maximal vertex of $F_1$ by $w$, we can rewrite the above sum as the following
    \begin{equation}\label{eq_sum2}
    h_{\CH(Q)^*}=\sum_{w \neq \hat{0}}\left(\sum_{F_1}(x-1)^{\dim F_1-1}\right)\left(\sum_{(F_2, \dots, F_l)}(x-1)^{(\sum_{2\leq j\leq l}\dim F_j)-l+1}\right)        
    \end{equation}
    where  the first sum is over all vertices $w$ of $P$ other than $\hat{0}$, the second sum is over all faces $F_1$ whose minimal vertex is $\hat{0}$ and maximal vertex is $w$, the last sum is over all sequences of faces $(F_2, \dots, F_l)$ satisfying
    \begin{enumerate}[label=(\arabic*')]
  \item the minimal vertex of $F_2$ is $w$;
  \item the maximal vertex of $F_l$ is $\hat{1}$;
  \item the maximal vertex of $F_k$ equals the minimal vertex of $F_{k+1}$
        for all $2\le k\le l-1$.
    \end{enumerate}
    When $l=1$, we set the value of the last sum to be 1. 
    It follows from the definition of $\overline{\kappa}$ we get
    \[
    \sum_{F_1}(x-1)^{\dim F_1-1}=\overline{\kappa}_{\hat{0}w}.
    \]
    Thus, applying \eqref{eq_sum1} to the last sum of \eqref{eq_sum2}, we have
    \[
    h_{\CH(Q)^*}=\sum_{w \neq \hat{0}}\overline{\kappa}_{\hat{0}w}\cdot h_{\CH(F^+(w))^*}.
    \]
    Equivalently, we have
    \[
    H'_{\hat{0}\hat{1}}=\sum_{\hat{0}< w\leq \hat{1}} \overline{\kappa}_{\hat{0}w}H'_{w\hat{1}}.
    \]
    Since $\overline{\kappa}_{\hat{0}\hat{0}}=-1$, the above equation is equivalent to
    \[
    \sum_{\hat{0}\leq w\leq \hat{1}} \overline{\kappa}_{\hat{0}w}H'_{w\hat{1}}=0,
    \]
    which is \eqref{eq_enough}. Hence, we have finished the proof. 
\end{proof}

As an immediate corollary we obtain the following result.
\begin{corollary}
     If Assumption \ref{ass:equivalent} holds for $P$ and $\CH(P)$ is simple, the Chow polynomial $H_{vw}$ for the poset $\mathcal{O}$ with kernel $\kappa$ has positive coefficients and is palindromic and unimodal.
\end{corollary}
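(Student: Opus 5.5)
By Theorem~\ref{thm:Chow poly is h of Chow}, under Assumption~\ref{ass:equivalent} each $H_{vw}$ equals the $h$-polynomial of the dual polytope $\CH(F_{[v,w]})^*$, where $F_{[v,w]}=F^+(v)\cap F^-(w)$. So I would prove the corollary by studying this $h$-polynomial. The plan has three steps: (i) show that $\CH(F_{[v,w]})$ is simple, so its dual is simplicial; (ii) apply the standard positivity, palindromicity and unimodality results for $h$-vectors of simplicial polytopes; (iii) handle the degenerate cases.

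For (i), $F_{[v,w]}$ is a face of $P$. Proposition~\ref{prop:construct simple examples}~(3) says that simplicity of $\CH(P)$ passes to $\CH(F)$ for every face $F$. Alternatively, condition (4) of Theorem~\ref{thm:simpleCH} restricts directly to faces. Proposition~\ref{prop:constructexamples}~(3) says that Assumption~\ref{ass:equivalent} also passes to faces. Hence $Q:=\CH(F_{[v,w]})$ is a simple polytope of dimension $\dim F_{[v,w]}-1$, and $Q^*$ is simplicial.

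For (ii), take $Q^*$ simplicial of dimension $d$. The Dehn--Sommerville relations give that $h_{Q^*}$ is palindromic. Stanley's proof of the necessity part of the $g$-theorem, via hard Lefschetz on the projective toric variety of a rationally realized combinatorial type, or McMullen's combinatorial proof, gives $h_0\le h_1\le\cdots\le h_{\lfloor d/2\rfloor}$. With palindromicity this yields unimodality. Positivity follows because $h_0=1$ and the sequence is nondecreasing up to the middle and symmetric. Equivalently, $h_i$ is the $2i$-th Betti number of the toric variety of $Q$.

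Alternatively, one can stay inside the paper's framework. Proposition~\ref{prop:kernel} identifies the KLS polynomials as face-poset $\f$-polynomials, and these are nonnegative since their coefficients are intersection cohomology Betti numbers, or by Braden--MacPherson and Karu for arbitrary polytopes. Theorem~\ref{thm:non-negative unimodal} then gives nonnegativity and unimodality, and palindromicity still comes from the $h$-vector interpretation. I would state the first route and mention the second.

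For (iii), the only care needed is the edge cases. When $v=w$, $H_{vv}=1$ is trivially fine. When $\dim F_{[v,w]}=1$, $Q$ is a point and $h=1$. The main obstacle, such as it is, lies in step (ii): the $g$-theorem needs the polytope to be realizable, and $Q$ is a genuine convex polytope. If one wants to avoid rationality issues, the combinatorial proof of McMullen suffices, and I would cite it.
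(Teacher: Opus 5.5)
Your proposal is correct and follows the paper's argument. The paper states this corollary as an immediate consequence of Theorem~\ref{thm:Chow poly is h of Chow}: simplicity passes from $\CH(P)$ to $\CH(F^+(v)\cap F^-(w))$ by Proposition~\ref{prop:construct simple examples}~(3), so each $H_{vw}$ is the $h$-polynomial of a simplicial polytope. Dehn--Sommerville then gives palindromicity, and the $g$-theorem gives positivity and unimodality.

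Your secondary route through Theorem~\ref{thm:non-negative unimodal} only yields nonnegativity, not strict positivity, so you are right to lead with the $h$-vector argument.
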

\begin{remark}
Assume that $P$ satisfies Assumption~\ref{ass:equivalent}. 
When $\CH(P)$ is not simple, the map $\kappa$ from Proposition~\ref{prop:kernel} need not be a kernel. Nevertheless, we may still define the Chow polynomials $H$ via $-(\overline{\kappa})^{-1}$, which may not be palindromic. In this case, the dual polytopes $\CH\!\left(F^+(v)\cap F^-(w)\right)^*$ need not be simple.
We may define the $h$-polynomial of a non-simple polytope $Q$ in terms of its $f$-vector by the standard relation
\[
h_Q(x)=\sum_{i=0}^d f_{i-1}(Q)(x-1)^{d-i}.
\]
In this setting, Theorem~\ref{thm:Chow poly is h of Chow} remains valid with the same proof.
\end{remark}

\subsection{Polytopes for which kernel $\kappa$ is the characteristic kernel}


As illustrated by Theorem~\ref{thm:gammapos}, Chow polynomials exhibit better behavior for the characteristic kernel. 
In this section, we provide conditions that guarantee that the kernel $\kappa$ defined in Proposition~\ref{prop:kernel} coincides with the characteristic kernel of the poset $\mathcal{O}$.

\begin{proposition}\label{prop:example characteristic kernel}
Assume that all polytopes here satisfy Assumption~\ref{ass:equivalent} and have simple monotone path polytopes.
\begin{enumerate}
    \item Let $P$ and $Q$ be polytopes such that the kernels $\kappa_P$ and $\kappa_Q$ recover the characteristic kernels of $\mathcal{O}_P$ and $\mathcal{O}_Q$, respectively. Then the kernel $\kappa$ for the product $P \times Q$ recovers the characteristic kernel of $\mathcal{O}_{P \times Q}$.
    
    \item Let $(P,\ell)$ be such that every vertex $v$ is simple in $F^-(v)$, and let $\mathrm{Pyr}(P)$ be a lower pyramid over $P$. Then the kernel $\kappa_{\mathrm{Pyr}(P)}$ recovers the characteristic kernel of $\mathcal{O}_{\mathrm{Pyr}(P)}$.
    
    \item The kernel $\kappa$ of a simplex is the characteristic kernel.
\end{enumerate}
\end{proposition}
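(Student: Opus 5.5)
The plan is to compare $\kappa$ with $\chi=\zeta^{-1}\zeta^{\rev}$ interval by interval. Since everything is local to $F_{[v,w]}$, it suffices to check $\kappa_{\hat0\hat1}=\chi_{\hat0\hat1}$ for each polytope in question, with $\hat0,\hat1$ its source and sink. By Proposition~\ref{prop:constructexamples}~(3), faces inherit the hypotheses, and $\OO_{F_{[v,w]}}$ is the interval $[v,w]$ of $\OO_P$. Recall
\[
\chi_{st}=\sum_{s\preceq u\preceq t}\mu_{su}x^{\rho_{ut}},\qquad \kappa_{st}=\sum_{F\in\mathcal S_{st}}(x-1)^{\dim F}.
\]

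\textbf{Part (3), simplex.} Every subset of vertices spans a face, and for a simplex with linear order $v_0\prec\dots\prec v_d$ we have $\OO=\CCC$ equal to this chain. The set $\mathcal S_{v_iv_j}$ consists of the faces containing $v_i,v_j$ and contained in $\{v_i,\dots,v_j\}$, so
\[
\kappa_{v_iv_j}=\sum_{k}\binom{j-i-1}{k}(x-1)^{k+1}=(x-1)x^{j-i-1}.
\]
For the chain, $\mu$ is $1$ on the diagonal, $-1$ on covers and $0$ otherwise, which gives $\chi_{v_iv_j}=x^{j-i}-x^{j-i-1}$. The two agree.

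\textbf{Part (1), products.} Faces of $P\times Q$ are products $G\times H$, with $\min(G\times H)=(\min G,\min H)$ and similarly for $\max$. Hence $\mathcal S_{(v,v'),(w,w')}=\mathcal S_{vw}\times\mathcal S_{v'w'}$, and since $(x-1)^{\dim G+\dim H}$ factors, $\kappa_{P\times Q}=\kappa_P\otimes\kappa_Q$ entrywise. The order $\OO_{P\times Q}=\CCC_{P\times Q}$ is the product order: directed edges of $P\times Q$ are edge$\times$vertex, so chains are products. The rank is additive by Corollary~\ref{cor:gradthen graded}, as $F^-$ of a product is the product of the $F^-$'s. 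Möbius functions multiply on products and $x^{\rho}$ is multiplicative, so $\chi_{P\times Q}=\chi_P\otimes\chi_Q$. This gives (1).

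\textbf{Part (2), lower pyramid.} Let the apex $a$ be the new source, so $\OO_{\mathrm{Pyr}(P)}=\OO_P$ with a new bottom element $a$. For $v\prec w$ in $P$ the faces in $\mathcal S_{vw}$ are unchanged, since any face through $a$ has minimum $a$. The new entries are
\[
\kappa_{aw}=(x-1)+\sum_{F\in\mathcal S_{\hat0_P w}\cup\dots}\cdots,
\]
more precisely $\kappa_{aw}=\sum_{G\subseteq P,\ \max G=w}(x-1)^{\dim G+1}$, the sum running over faces $G$ of $P$ with $\max G=w$ (the pyramids over them), with the edge $aw$ corresponding to $G=\{w\}$. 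Using the simplicity of $w$ in $F^-(w)$, whose cone at $w$ is simplicial of dimension $\rho(w)$ with edges all incoming, the faces $G\ni w$ with $\max G=w$ are exactly the faces of $F^-(w)$ containing $w$. These correspond to subsets of the $\rho(w)$ incoming edges, so
\[
\kappa_{aw}=(x-1)\sum_k\binom{\rho(w)}{k}(x-1)^k=(x-1)x^{\rho(w)}.
\]
It remains to show $\chi_{aw}=(x-1)x^{\rho(w)}$ as well.

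\textbf{Main obstacle.} The hard step is computing the Möbius function from the adjoined bottom. Adjoining a new minimum gives $\mu_{aa}=1$, $\mu_{a\hat0_P}=-1$, and $\mu_{au}=0$ for $u\succ\hat0_P$. Here I must check that $\hat0_P$ is the unique cover of $a$: the directed edges from $a$ go to every vertex of $P$, but $\CCC$ covers only $\hat0_P$, and in the graded poset $\rho(\hat0_P)=1$ after shifting. Then
\[
\chi_{aw}=x^{\rho(w)+1}-x^{\rho(w)},
\]
using $\rho_{aw}=\rho(w)+1$ and $\rho_{\hat0_Pw}=\rho(w)$. This matches $\kappa_{aw}$, completing (2) together with the fact that the entries of $\kappa$ and $\chi$ on intervals within $P$ agree with those of $P$. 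Note that the hypothesis of (2) does not require $\kappa_P=\chi_P$, so I would verify whether the intervals inside $P$ also need this. If they do, and it is not available, I would restrict to iterated pyramids over simplices, or use the pyramid formula only at the apex.
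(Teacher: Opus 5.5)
Your treatment of (1), (3), and of the new intervals $[a,w]$ in (2) matches the paper's proof. The same ingredients appear in both: multiplicativity of $\kappa$ and $\chi$ under products, the binomial count on the simplex, and $\mu_{aa}=1$, $\mu_{a\hat0_P}=-1$ combined with simplicity of $w$ in $F^-(w)$.

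The genuine gap is the point you raise at the end and leave open. For $v,w\in P$, both $\mathcal S_{vw}$ and the interval $[v,w]$ (with its rank differences) are the same in $\mathrm{Pyr}(P)$ as in $P$. So agreement on these intervals is exactly the statement $\kappa_P=\chi_P$. It cannot be bypassed, and falling back to iterated pyramids over simplices would prove a weaker proposition than the one stated. The paper's proof simply asserts that it suffices to check intervals $[a,v]$, so it uses this fact tacitly as well.

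The missing step does follow from the hypothesis that every vertex $t$ is simple in $F^-(t)$.
\begin{itemize}
\item For $s\preceq t$, the faces $F$ with $\max F=t$ and $\min F\succeq s$ are precisely the faces of $F_{[s,t]}=F^+(s)\cap F^-(t)$ containing $t$. This uses $\BB^+=\CCC$ and $F^+(u)\subseteq F^+(s)$ for $s\preceq u$.
\item Since $F_{[s,t]}$ is a face of $F^-(t)$ through $t$, the vertex $t$ is simple in it.
\item By Lemmas~\ref{lem:dimsum=dimP} and~\ref{lem:dimensionsbehavenicely}, $\dim F_{[s,t]}=\rho_{st}$.
\end{itemize}
Therefore
\[
\sum_{s\preceq u\preceq t}\kappa_{ut}=\sum_{t\in F\subseteq F_{[s,t]}}(x-1)^{\dim F}=\sum_{k}\binom{\rho_{st}}{k}(x-1)^k=x^{\rho_{st}},
\]
that is, $\zeta\kappa=\zeta^{\rev}$, and hence $\kappa=\zeta^{-1}\zeta^{\rev}=\chi$. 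So any $P$ as in (2) already has $\kappa_P=\chi_P$, which closes your gap.

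Moreover, $\mathrm{Pyr}(P)$ inherits the hypothesis. Indeed, $F^-_{\mathrm{Pyr}(P)}(v)=\mathrm{Pyr}(F^-_P(v))$, and $v$ gains exactly one edge, the one from $a$. So this single identity proves (2), and likewise (3), outright. It also makes the separate M\"obius computation at the apex unnecessary.
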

\begin{proof}
For part $(1)$, it suffices to observe that both $\kappa$ and the characteristic kernels are multiplicative with respect to products of polytopes.

For part $(3)$, we argue by direct computation. For any vertices $v \leq w$, the intersection $F^+(v)\cap F^-(w)$ is a simplex of lower dimension. Thus, it suffices to verify that
    \[ \kappa_{\hat{0}\hat{1}}(x) = \chi_{\hat{0}\hat{1}}(x).
    \]
The poset $\mathcal{O}$ of the simplex is a chain of length $n+1$ so we get  $\chi_{\hat{0}\hat{1}}(x) = x^n-x^{n-1}$.
    On the other hand we get that the number of faces of dimension $i$ in $\mathcal{S}_{\hat{0}\hat{1}}$ is given by $\binom{n-1}{i-1}$ and thus
    \[
\kappa_{\hat{0}\hat{1}}(x)  = \sum_{i=1}^n \binom{n-1}{i-1}(x-1)^i = (x-1)\sum_{j=0}^{n-1} \binom{n-1}{j}(x-1)^j=(x-1)(1+(x-1))^{n-1}=x^n-x^{n-1}. 
    \]

It remains to prove part $(2)$. By assumption, every vertex $v$ of $P$ is simple in $F^-(v)$, and the apex $a$ of $Pyr(P)$ is the new minimum of the linear function. It suffices to check that the kernels coincide for intervals of the form $[a,v]$. By replacing $P$ with $F^-(v)$, we may assume that $v$ is the maximal vertex of $P$. Let $w$ denote the minimal vertex of $P$. Then the Möbius function satisfies $\mu_{aa}=1$, $\mu_{aw}=-1$, and $\mu_{ax}=0$ for any other vertex $x$. It follows that
\[
\chi_{a,v} = x^{\dim P+1}-x^{\dim P}.
\]
On the other hand,
\[
\kappa_{av} = \sum_{G\in O^-(v)} (x-1)^{\dim G+1},
\]
where $O^-(v)$ is taken in $P$. Since $v$ is a simple vertex, we obtain
\[
\kappa_{av} = \sum_{i=0}^{\dim P} \binom{\dim P}{i} (x-1)^{i+1} = (x-1)\cdot \sum_{i=0}^{\dim P} \binom{\dim P}{i} (x-1)^{i} = (x-1)((x-1)+1)^{\dim P} = x^{\dim P+1}-x^{\dim P}.
\]
This completes the proof.
\end{proof}

\begin{remark}
    Note that the kernel $\kappa$ is invariant with respect to replacing function $\ell$ with $-\ell$, but the characteristic kernel is not in general. Thus in part~$(2)$ of Proposition~\ref{prop:example characteristic kernel} we only have a statement about lower pyramid in contrast to part~$(2)$ of Proposition~\ref{prop:construct simple examples}.
\end{remark}
\begin{corollary}
   Let \( P \) be a polytope constructed from simple polytopes satisfying Assumption~\ref{ass:equivalent} via products and iterated lower pyramids. Then \( \CH(P) \) is simple, and \( \kappa_P \) agrees with the characteristic kernel. In particular, the \( h \)-vector of \( \CH(P) \) is \( \gamma \)-positive.
\end{corollary}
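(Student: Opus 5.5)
The corollary makes three claims: $\CH(P)$ is simple, $\kappa_P$ is the characteristic kernel, and the $h$-vector of $\CH(P)$ is $\gamma$-positive. I would prove the first two together by induction on the number of construction steps, using Proposition~\ref{prop:construct simple examples} and Proposition~\ref{prop:example characteristic kernel} in parallel. The inductive hypothesis is a strengthened class $\mathcal{K}$ of pairs $(P,\ell)$. Membership in $\mathcal{K}$ requires four things: $P$ satisfies Assumption~\ref{ass:equivalent}, $\CH(P)$ is simple, $\kappa_P=\chi$ on $\mathcal{O}_P$, and every vertex $v$ is simple in $F^-(v)$.

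The base case is a simple polytope $P$ satisfying Assumption~\ref{ass:equivalent}. Corollary~\ref{cor:Psim then CH sim} gives that $\CH(P)$ is simple, and vertices are simple everywhere, in particular in $F^-(v)$. For $\kappa_P=\chi$, Corollary~\ref{cor:prod_sim} says $P$ is combinatorially a product of simplices. Since $\kappa$ and $\chi$ depend only on the face poset together with $\ell$-extremal data, I would identify $P$ with such a product. Then Proposition~\ref{prop:example characteristic kernel} parts (3) and (1) give $\kappa_P=\chi$, after checking that the induced $\ell$ on a product of simplices splits as a product of the orientations on the factors. That check is exactly where Remark~\ref{rem:product of chains} enters.

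For the inductive step, consider products first. Proposition~\ref{prop:construct simple examples}(1) gives simplicity of $\CH$, Proposition~\ref{prop:constructexamples}(1) gives Assumption~\ref{ass:equivalent}, and Proposition~\ref{prop:example characteristic kernel}(1) gives $\kappa=\chi$. Vertex simplicity in $F^-$ is preserved because $F^-_{P\times Q}(v,w)=F^-_P(v)\times F^-_Q(w)$ and products of simple vertices are simple.

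Next, lower pyramids. Because each $v$ is simple in $F^-(v)$, Proposition~\ref{prop:construct simple examples}(2) and Proposition~\ref{prop:example characteristic kernel}(2) give simplicity of $\CH$ and $\kappa=\chi$. It remains to see that the new polytope again has vertices simple in their $F^-$. The apex is the source, so $F^-(\text{apex})$ is a point. For an old vertex $v$, $F^-_{\mathrm{Pyr}}(v)$ is the pyramid over $F^-_P(v)$ with $v$ as a non-apex vertex. There $v$ has degree $\deg_{F^-_P(v)}v+1=\dim F^-_P(v)+1$, which equals the new dimension, so $v$ stays simple.

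The $\gamma$-positivity then follows in three moves. By Theorem~\ref{thm:Chow poly is h of Chow}, the $\kappa$-Chow polynomial $H_{\hat0\hat1}$ equals the $h$-polynomial of $\CH(P)^*$; since $\CH(P)$ is simple, this coincides (by Dehn–Sommerville) with the $h$-vector of $\CH(P)$. Since $\kappa=\chi$, this is the $\chi$-Chow polynomial of $\mathcal{O}_P$, and Theorem~\ref{thm:gammapos} gives $\gamma$-positivity once $\mathcal{O}_P$ is Cohen–Macaulay. That requirement is the main obstacle. I would try to show that $\mathcal{O}_P$ is shellable, either via Remark~\ref{rem:product of chains} in the base case with products and cones preserving shellability, or directly: products of Cohen–Macaulay posets are Cohen–Macaulay, and adjoining a new minimum (lower pyramid) yields a poset whose proper part is a cone-type extension. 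The delicate point is checking that adding the apex below everything keeps the intervals $[a,v]$ Cohen–Macaulay, since they are $\{a\}\sqcup[\hat0_P,v]$.
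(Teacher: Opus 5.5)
Your proposal is correct and follows the paper's proof: induction over products and lower pyramids maintaining the invariant that every vertex $v$ is simple in $F^-(v)$, then Theorem~\ref{thm:Chow poly is h of Chow} and Theorem~\ref{thm:gammapos}, with Cohen--Macaulayness preserved under products and under adjoining a minimum. Two of your worries are lighter than you suggest: the ``delicate point'' is immediate, since the order complex of $\{a\}\oplus\mathcal{O}_P$ is a cone over that of $\mathcal{O}_P$ (each open interval $(a,v)$ has least element $\hat 0_P$); and for $\kappa=\chi$ you need no product-of-simplices identification, because if $w$ is simple in $F^-(w)$ then the faces of $F^+(v)\cap F^-(w)$ through $w$ are exactly those in $\bigsqcup_{v\preceq u\preceq w}\mathcal{S}_{uw}$, so $\sum_{v\preceq u\preceq w}\kappa_{uw}=x^{\rho(w)-\rho(v)}$, i.e.\ $\zeta\kappa=\zeta^{\rev}$ and hence $\kappa=\zeta^{-1}\zeta^{\rev}=\chi$.
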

\begin{proof}
    An iterated lower pyramid over product of simplices satisfies assumptions of part $(2)$ of Proposition~\ref{prop:example characteristic kernel}. Moreover, if two polytopes $P,Q$ satisfy assumptions of part $(2)$ of Proposition~\ref{prop:example characteristic kernel}, then their product also satisfies assumptions of part $(2)$ of Proposition~\ref{prop:example characteristic kernel}. Hence the first  statement follows by induction on the number of operations.

    The statement on $\gamma$-positivity of $h$-vector of $\CH(P)$ follows from Theorem~\ref{thm:Chow poly is h of Chow} and Theorem~\ref{thm:gammapos}, since Cohen-Macauleyness of posets is preserved under operations of taking products and adding unique minimal element.
\end{proof}



\begin{example}\label{ex:nonelementary}
A family of polytopes satisfying Assumption~\ref{ass:equivalent} that do not arise as products of simplices (apart from the case of the cube) or the cone construction are trapezohedra. Below we we present a picture of tetragonal trapezohedron.

The linear function $\ell$ is the height function which is minimal on the bottom apex and maximal on the top apex. 
Denoting by $\hat 0$ the source and $\hat 1$ the sink, we have $\kappa_{\hat 0\hat 1}=(x-1)^3$. We note this is different from the characteristic kernel of this poset which is $x^3-4x^2$ on the interval between $\hat 0$~and~$\hat 1$.
\end{example}

\begin{figure}[h]
\centering
\begin{subfigure}{.5\textwidth}
  \centering
  \begin{tikzpicture}[
  scale=.8,
  line join=round, line cap=round,
  x={(0.95cm,-0.10cm)},
  y={(0.35cm, 0.32cm)},
  z={(0.00cm, 0.70cm)}
]

\def\R{2.6}
\def\h{0.85}
\pgfmathsetmacro{\a}{(3+2*sqrt(2))*\h}

\coordinate (T) at (0,0,\a);
\coordinate (B) at (0,0,-\a);

\foreach \k in {0,...,7}{
  \pgfmathsetmacro{\ang}{45*\k}
  \pgfmathtruncatemacro{\parity}{mod(\k,2)}
  \ifnum\parity=0
    \coordinate (v\k) at ({\R*cos(\ang)},{\R*sin(\ang)},{+\h});
  \else
    \coordinate (v\k) at ({\R*cos(\ang)},{\R*sin(\ang)},{-\h});
  \fi
}

\tikzset{
  edgeFront/.style={draw=black, line width=0.95pt, opacity=1},
  edgeBack/.style ={draw=black!55, line width=0.28pt, opacity=0.22},
  faintface/.style={opacity=0.16, draw=none},  
  boldface/.style ={opacity=1,    draw=none},  
}


\path[faintface, fill=gray]   (T)--(v0)--(v1)--(v2)--cycle; 
\path[faintface, fill=gray]   (T)--(v2)--(v3)--(v4)--cycle; 
\path[faintface, fill=gray]   (B)--(v1)--(v2)--(v3)--cycle; 
\path[faintface, fill=gray]   (B)--(v3)--(v4)--(v5)--cycle; 

\path[boldface, fill=blue!55]   (T)--(v4)--(v5)--(v6)--cycle; 
\path[boldface, fill=teal!55]   (T)--(v6)--(v7)--(v0)--cycle; 
\path[boldface, fill=orange!70] (B)--(v5)--(v6)--(v7)--cycle; 
\path[boldface, fill=green!60]  (B)--(v7)--(v0)--(v1)--cycle; 


\foreach \i in {0,...,7}{
  \pgfmathtruncatemacro{\j}{mod(\i+1,8)}
  \draw[edgeBack] (v\i)--(v\j);
}
\foreach \i in {0,2,4,6}{ \draw[edgeBack] (T)--(v\i); }
\foreach \i in {1,3,5,7}{ \draw[edgeBack] (B)--(v\i); }

\draw[edgeFront] (v4)--(v5);
\draw[edgeFront] (v5)--(v6);
\draw[edgeFront] (v6)--(v7);
\draw[edgeFront] (v7)--(v0);
\draw[edgeFront] (v0)--(v1);

\draw[edgeFront] (T)--(v0);
\draw[edgeFront] (T)--(v4);
\draw[edgeFront] (T)--(v6);

\draw[edgeFront] (B)--(v1);
\draw[edgeFront] (B)--(v5);
\draw[edgeFront] (B)--(v7);
\end{tikzpicture}
  \caption{A Trapezohedron $T_4$}
  \label{fig:sub1}
\end{subfigure}%
\begin{subfigure}{.5\textwidth}
  \centering
  \begin{tikzpicture}[
    scale=0.8,
    every node/.style={circle,fill=black,inner sep=1.6pt},
    edge/.style={line width=0.9pt}
]


\coordinate (B) at (0,0);

\coordinate (v1) at (-3,1.8);
\coordinate (v3) at (-1,1.8);
\coordinate (v5) at ( 1,1.8);
\coordinate (v7) at ( 3,1.8);

\coordinate (v0) at (-3,3.6);
\coordinate (v2) at (-1,3.6);
\coordinate (v4) at ( 1,3.6);
\coordinate (v6) at ( 3,3.6);

\coordinate (T) at (0,5.4);


\draw[edge] (v0)--(v1);
\draw[edge] (v1)--(v2);
\draw[edge] (v2)--(v3);
\draw[edge] (v3)--(v4);
\draw[edge] (v4)--(v5);
\draw[edge] (v5)--(v6);
\draw[edge] (v6)--(v7);
\draw[edge] (v7)--(v0);

\foreach \v in {v0,v2,v4,v6}
    \draw[edge] (T)--(\v);

\foreach \v in {v1,v3,v5,v7}
    \draw[edge] (B)--(\v);

\foreach \v in {B,v0,v1,v2,v3,v4,v5,v6,v7,T}
    \node at (\v) {};
\end{tikzpicture}
  \caption{Hasse diagram of the poset $\mathcal{O}$ of $T_4$}
  \label{fig:sub2}
\end{subfigure}
\caption{A figure with two subfigures}
\label{fig:test}
\end{figure}
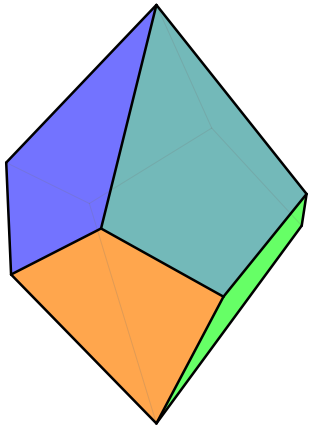
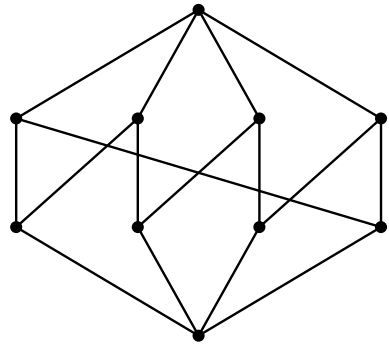

\begin{remark}
    Let us define the following classes of $n$-dimensional polytopes:
    \begin{align*}
        \mathrm{Strat}_n &= \{P \,|\, P \text{ satisfies Assumption~\ref{ass:equivalent}}\},\\
         \mathrm{SCH}_n &= \{P \,|\, P\in\mathrm{Strat}_n \text{ and } \CH(P) \text{ is simple}\},\\
         \mathrm{CharKer}_n &= \{P \,|\,   P\in\mathrm{SCH}_n \text{ and } \kappa_P \text{ is the characteristic kernel of the poset } \mathcal{O}_P\}.
    \end{align*}
By definition, we get
\[
\mathrm{CharKer}_n \subset \mathrm{SCH}_n \subset \mathrm{Strat}_n, 
\]
Example~\ref{ex:nonelementary} shows that the first inclusion is strict and Example~\ref{ex:doublepyramid} shows that the second inclusion is strict.
Moreover, Propositions~\ref{prop:constructexamples},~\ref{prop:construct simple examples}~and~\ref{prop:example characteristic kernel} provide operations on polytopes preserving classes $\mathrm{Strat}_n,\mathrm{SCH}_n$ and $\mathrm{CharKer}_n$ respectively. It would be interesting to see more operations preserving these classes as well as examples of polytopes separating them.
\end{remark}


\bibliographystyle{alpha}
\bibliography{gsv}

\end{document}